\newcommand{\Gr}{{\operatorname{Gr}}}
\renewcommand{\Im}{{\operatorname{Im}}}
\newcommand{\CH}{\operatorname{CH}}
\newcommand{\aut}{\operatorname{Aut}}
\DeclareMathOperator{\GDB}{GDB}
\renewcommand{\dim}{\operatorname{dim}}
\renewcommand{\Im}{\operatorname{Im}}
\newcommand{\C}{\mathbb{C}}
\renewcommand{\L}{\mathbf{L}}
\newcommand{\LL}{\mathcal{L}}
\renewcommand{\P}{\mathbb{P}}
\newcommand{\N}{\mathbb{N}}
\newcommand{\Q}{\mathbb{Q}}
\newcommand{\A}{\mathbb{A}}
 \newcommand{\sC}{\mathcal{C}}
\newcommand{\sM}{\mathcal{M}}
\newcommand{\sO}{\mathcal{O}}
\newcommand{\sB}{\mathcal{B}}
\newcommand{\sX}{\mathcal{X}}
\newcommand{\sY}{\mathcal{Y}}
 \numberwithin{equation}{section}
\theoremstyle{plain}
\newtheorem{thm}[equation]{Theorem}
\newtheorem{prop}[equation]{Proposition}
\newtheorem{lm}[equation]{Lemma}
\newtheorem{cor}[equation]{Corollary}
\newtheorem{conj}[equation]{Conjecture}
\newtheorem{convention}{Conventions}
\DeclareMathOperator{\GDCH}{GDCH}
\theoremstyle{definition}
\newtheorem{defn}[equation]{Definition}
\newtheorem{rk}[equation]{Remark}
\newtheorem{nonumbering}{Theorem}
\newtheorem{nonumberingp}{Proposition}
\thanks{\textit{2020 Mathematics Subject Classification:}  14C15, 14C25, 14C30}
\keywords{algebraic cycles, Chow groups, motive, Gushel--Mukai varieties, hyper-K\"ahler varieties}
\thanks{M.B. and R.L. are supported by ANR grant ANR-20-CE40-0023.} 
\newtheorem{nonumberingt}{Acknowledgments}
\newif\ifHideFoot
\newcommand{\Michele}[1]{}
\newcommand{\Robert}[1]{}
\newcommand{\marg}[1]{\normalsize{{
			\color{red}\footnote{{\color{blue}#1}}}{\marginpar[\vskip
			-.25cm{\color{red}\hfill$\Rightarrow$\tiny\thefootnote}]{\vskip
				-.2cm{\color{red}$\Leftarrow$\tiny\thefootnote}}}}}
\newcommand{\Michele}[1]{\marg{(Michele) #1}}
\newcommand{\Robert}[1]{\marg{(Robert) #1}}
\begin{document}

\title{Some motivic properties of Gushel-Mukai sixfolds}

\author[M. Bolognesi]{Michele Bolognesi}
\address{Institut Montpellierain Alexander Grothendieck \\ %
Universit\'e de Montpellier \\ %
CNRS \\ %
Case Courrier 051 - Place Eug\`ene Bataillon \\ %
34095 Montpellier Cedex 5 \\ %
France}
\email{michele.bolognesi@umontpellier.fr}

\author[R. Laterveer]{Robert Laterveer}
\address{Institut de Recherche Mathématique Avanc\'ee, CNRS \\ %
Universit\'e de Strasbourg, \\ %
 7 Rue Ren\'e Descartes,\\ %
 67084 Strasbourg Cedex, France.}
\email{robert.laterveer@math.unistra.fr}

\begin{abstract} 
Gushel-Mukai sixfolds are an important class of so-called Fano-K3 varieties. In this paper we show that they admit a multiplicative Chow-Künneth decomposition modulo algebraic equivalence and that they have the Franchetta property. As side results, we show that double EPW sextics and cubes have the Franchetta property, modulo algebraic equivalence, and some vanishing results for the Chow ring of Gushel-Mukai sixfolds.
\end{abstract}
\maketitle

\section{Introduction}

Fano varieties of K3 type, sometimes dubbed as FK3, are a special class of smooth projective varieties. Roughly speaking, their
name comes from the fact that they have Hodge-theoretical properties that are very similar to those of a K3 surface. Of course, the quintessential example of a FK3 variety is the celebrated cubic fourfold, the zero locus of a
cubic polynomial in a five-dimensional projective space. More precisely:

\begin{defn}
Let $H$ be a Hodge structure of even weight $k$. The level $\lambda(H)$ of $H$ is defined as follows:

$$\lambda(H):= \mathrm{max}\{q-p|H^{p,q}\neq 0\}.$$

We say that $H$ is of K3 type if:

\begin{enumerate}
\item $\lambda(H)=2$;
\item $h^{\frac{k-2}{2},\frac{k+2}{2}}=1.$
\end{enumerate}

\end{defn}

\begin{defn}
Let $X$ be a smooth Fano variety. We define $X$ to be of K3 type (FK3) if $\dim(X)=2d$ and the Hodge numbers $h^{p,q}=0$ for all $p\neq q$, except for $h^{d-1,d+1}(X)=h^{d+1,d-1}(X)=1$
%$H^\ast(X,\C)$ contains at least one sub-Hodge structure of K3 type, and for every $k$ there is at most one K3 structure.
\end{defn}

We observe that if $X$ is of FK3 type, then $H^\ast(X,\C)$ contains at least one sub-Hodge structure of K3 type.
One reason that FK3 varieties are of interest is the (expected) link with hyper-K\"ahler varieties; for more on this cf. the nice recent overview \cite{Fat} and the references given there.

Let $Y$ be a smooth projective variety  over $\C$, and let $\CH^i(Y )_\Q$ denote the Chow groups
of $Y$. The intersection product defines a ring structure on the Chow ring of $Y$. The Chow ring of K3 surfaces has a particular structure.

\begin{thm}(Beauville-Voisin \cite{Vbogo})
Let S be a K3 surface. The $\Q$-subalgebra

$$R^\ast(S) := \langle \CH^1
(S), c_j(S)\rangle  \subset \CH^\ast(S)$$

injects into cohomology under the cycle class map.
\end{thm}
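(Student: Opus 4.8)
The plan is to reduce the statement to a single assertion about zero-cycles and then prove that assertion using two classical inputs: the Bogomolov--Mumford theorem (every ample linear system on a K3 surface contains an effective divisor all of whose irreducible components are rational curves) and Roitman's theorem (the group $\CH_0(S)_{\deg 0}$ is torsion free, since $S$ has trivial Albanese). First I would unwind the definition of $R^\ast(S)$: as $S$ is a surface, $R^\ast(S)$ is spanned degree by degree by $\CH^0(S)_\Q=\Q$, by $\CH^1(S)_\Q=\Pic(S)_\Q$, and in degree $2$ by $c_2(S)$ together with the products $D\cdot D'$ for $D,D'\in\CH^1(S)$ (note $c_1(S)=0$ since $K_S=0$, so $c_1$ contributes nothing). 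In degrees $0$ and $1$ the cycle class map is visibly injective, because $\CH^0(S)_\Q\iso H^0(S,\Q)$ and $\CH^1(S)_\Q=\Pic(S)_\Q\hookrightarrow H^2(S,\Q)$ as $H^1(S,\sO_S)=0$. So everything happens in degree $2$: letting $A\subset\CH_0(S)_\Q$ denote the subspace generated by $c_2(S)$ and by all products $D\cdot D'$, it suffices to show that $A=\Q\,o_S$ for a zero-cycle $o_S$ of degree $1$, for then $A\to H^4(S,\Q)=\Q$ is the degree map, which is injective.

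The candidate generator $o_S$ is the class of a point lying on a rational curve. I would treat the divisor part first. By bilinearity, and since $\Pic(S)_\Q$ is spanned by very ample classes, it suffices to compute $D\cdot D'$ for $D=[C]$, $D'=[C']$ with $C,C'$ smooth irreducible curves (Bertini; if $D=D'$, replace one copy by a general, hence smooth and distinct, member of the same system). Now move $C'$ inside its linear system to an effective divisor $R'$ all of whose components are rational (Bogomolov--Mumford); since $C'\sim R'$ we get $C\cdot C'=C\cdot R'$ in $\CH_0(S)$. As $C$ is very ample it is not rational, so it shares no component with $R'$ and $C\cdot R'$ is an honest zero-cycle supported on $R'$; and any zero-cycle supported on a connected curve whose components are all rational is an integer multiple of ``a point of it'' in $\CH_0(S)$ (pull back to the normalization $\P^1$ of each component and use that the components meet). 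Taking degrees, $D\cdot D'=(D\cdot D')\,o_{C'}$, where $o_{C'}$ is the point class of $R'$. Computing $C_1\cdot C_2$ in the two possible ways for two such very ample curves gives $(C_1\cdot C_2)(o_{C_1}-o_{C_2})=0$, whence $o_{C_1}=o_{C_2}$ because $C_1\cdot C_2\ne0$ and $\CH_0(S)_{\deg 0}$ is torsion free. Thus the point class $o_S$ is canonically defined and of degree $1$ (and, by a further intersection argument, it equals the class of a point on any rational curve in $S$), and the divisor part of $A$ is exactly $\Q\,o_S$.

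It then remains to prove $c_2(S)=24\,o_S$ in $\CH_0(S)$. Since $\deg c_2(S)=\chi_{\mathrm{top}}(S)=24$ and $\CH_0(S)_{\deg 0}$ is torsion free, it is enough to show that $c_2(S)=c_2(T_S)$ is rationally equivalent to a zero-cycle supported on a rational curve. Following Beauville--Voisin \cite{BV}, I would obtain such a representative by a vector-bundle and degeneracy-locus argument: pick an ample $L$ with a rational curve $R\in|L|$ (Bogomolov--Mumford) and compare $c_2(T_S)$ with the second Chern class of a suitable rank-two bundle built from $R$ (for instance a globally generated twist $T_S\otimes L^{\otimes k}$, exploiting the sequence $0\to \sO_S(-L)\to\sO_S\to\sO_R\to 0$), so that the correction terms are intersection numbers times $o_S$ by the divisor part already proven while the remaining term is manifestly carried by $R$. \textbf{This is the step I expect to be the main obstacle}: the naive degeneracy loci (the zero locus of a general section of a globally generated rank-two bundle, or the locus where two general sections become dependent) are supported on curves of positive genus, on which zero-cycles are \emph{not} multiples of $o_S$; the whole force of Bogomolov--Mumford is needed to replace these by rational curves, and the bookkeeping of the Chern-class corrections must be done with care. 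Granting $c_2(S)=24\,o_S$, we get $A=\Q\,o_S$, hence the cycle class map is injective on $A$, and therefore on all of $R^\ast(S)$.
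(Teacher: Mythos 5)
The paper quotes this theorem from \cite{BV} without proof, so the only meaningful comparison is with Beauville--Voisin's original argument. Your reduction and your treatment of the divisor part are correct and are essentially theirs: the only nontrivial degree is $4$, where $R^2(S)$ is spanned by $c_2(S)$ and the products $D\cdot D'$; Bogomolov--Mumford lets you move any very ample curve inside its linear system to a connected effective divisor all of whose components are rational, on which all points are rationally equivalent in $\CH_0(S)$, whence $D\cdot D'=\deg(D\cdot D')\,o_S$ for a well-defined degree-one class $o_S$. (Two minor remarks: since the paper works with $\Q$-coefficients throughout, Roitman's theorem is not actually needed to divide by $\deg(C_1\cdot C_2)$; and the identification of all points of the rational member $R'$ uses that members of an ample linear system are connected, which you invoke implicitly and which is fine.)

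The genuine gap is the identity $c_2(S)=24\,o_S$, which you state, sketch, and then explicitly leave open. This is not a loose end but the substantive half of the theorem: once the divisor part is known, injectivity of $R^2(S)\to H^4(S,\Q)\cong\Q$ is \emph{equivalent} to $c_2(S)\in\Q\,o_S$, so as written nothing has been established about $c_2(S)$ at all, and a priori $R^2(S)$ could be two-dimensional mapping onto the one-dimensional $H^4(S,\Q)$. Your proposed mechanism --- twist $T_S$ by $L^{\otimes k}$ to make it globally generated and represent $c_2$ by a zero locus or degeneracy locus --- cannot close as stated, for the reason you yourself identify: the zero scheme of a general section of a globally generated rank-two bundle consists of points in general position on $S$, and while the correction terms coming from the twist are indeed divisor intersections (hence multiples of $o_S$ by the part already proven), nothing forces the main term onto a rational curve. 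Beauville and Voisin prove $c_2(S)=24\,o_S$ by a separate geometric argument (Section 2 of their paper), exploiting a covering family of curves on $S$ and the structure of its degenerate members to produce a representative of $c_2(S)$ supported on rational curves; this input is independent of the divisor computation and has to be supplied. As it stands, your proposal proves injectivity of the cycle class map only on the subalgebra generated by $\CH^1(S)$, not on all of $R^\ast(S)$.
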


Eventually Beauville \cite{Beau3} has conjectured that
for projective hyper-Kähler varieties, the Chow ring should admit a multiplicative splitting. This is the case for example for K3 surfaces and for abelian varieties. More recently Shen and Vial \cite{SV} have introduced
the notion of multiplicative Chow–Künneth decomposition (MCK decomposition), which presents a more concrete realization  of Beauville's "splitting property conjecture".

The class of varieties admitting an MCK is still not completely understood. For instance, hyperelliptic curves admit an MCK but a very general curve of genus $g>2$ does not have such a decomposition. In \cite{44}, the second named author has conjectured that a FK3
should always admit such a multiplicative Chow–Künneth decomposition. This is verified for certain FK3 varieties, for instance for cubic fourfolds \cite{FLV2} and for most of the varieties \cite{40}, \cite{44}, \cite{59} on the Fatighenti--Mongardi list \cite{FM}. Cubic fourfolds, in particular, admit also a Chow-K\"unneth decomposition that displays nicely the associated K3 surface, if it exists \cite{Bu,BP,ABP}.
%and has eventually showed that Gushel-Mukai fivefolds (which are not FK3!) admit a MCK decomposition, modulo rational equivalence.

In this paper we go a little further along this path.  A Gushel-Mukai variety (GM for short) is a smooth intersection of a linear subspace, a quadric and the cone over the Grassmannian $\Gr(2,5)$ inside $\P^{10}$. It is well-known that GM fourfolds and GM sixfolds are FK3. Our first main result is the following.

% by showing the following.

\begin{nonumbering}[=Theorem \ref{thm:mck}]
Let $X$ be a smooth GM sixfold, then $X$ admits an MCK decomposition, modulo algebraic equivalence.
\end{nonumbering}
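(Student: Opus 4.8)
The plan is to exploit the presentation of a smooth GM sixfold as a double cover of the Grassmannian $G:=\Gr(2,5)$. Since $\dim G=6$, there is no room for a proper Grassmannian linear section, so every smooth GM sixfold $X$ is of Gushel type and the Gushel map realises it as a double cover $\pi\colon X\to G$ branched along a smooth GM fivefold $B\in|\sO_G(2)|$, with covering involution $\iota$. Put $p^{\pm}:=\tfrac12(\Delta_X\pm\Gamma_\iota)\in\CH^6(X\times X)$; then $\mathfrak h(X)=\mathfrak h(X)^{+}\oplus\mathfrak h(X)^{-}$ with $\mathfrak h(X)^{\pm}:=(X,p^{\pm},0)$, one has $\mathfrak h(X)^{+}\cong\mathfrak h(G)$, and $\pi^{*}$ identifies $\CH^{*}(G)$ with the invariant subring $\CH^{*}(X)^{+}$. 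As $G$ is cellular, $\mathfrak h(G)=\bigoplus_{i}\L^{\otimes n_{i}}$ (the $n_{i}$ ranging over $\{0,1,2,2,3,3,4,4,5,6\}$); this is its Chow--K\"unneth decomposition, and it is multiplicative because $\CH^{*}(G)$ injects into $H^{*}(G)$. Pulling the projectors $\pi^{i}_{G}$ back along $\pi$ gives orthogonal idempotents $\pi^{i}_{+}$ with $\sum_{i}\pi^{i}_{+}=p^{+}$.

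Next, the cohomology of a GM sixfold is known (Debarre--Kuznetsov): $H^{\mathrm{odd}}(X)=0$, $H^{2i}(X)=H^{2i}(G)$ for $i\ne 3$, and $H^{6}(X)^{-}$ is a polarised Hodge structure of K3 type. In particular $p^{-}$ annihilates $H^{k}(X)$ for $k\ne 6$, so setting $\pi^{i}:=\pi^{i}_{+}$ for $i\ne 6$ and $\pi^{6}:=\pi^{6}_{+}+p^{-}$ already yields a Chow--K\"unneth decomposition of $X$. The work lies in verifying the Shen--Vial multiplicativity relation \cite{SV}, i.e.\ $\pi^{k}\circ\delta_{X}\circ(\pi^{i}\otimes\pi^{j})=0$ modulo algebraic equivalence whenever $k\ne i+j$, $\delta_{X}\colon X\to X^{3}$ the small diagonal. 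Expanding the projectors into $\pm$-parts: the all-$+$ terms vanish by multiplicativity of the Chow--K\"unneth decomposition of $G$ (via the ring isomorphism $\CH^{*}(X)^{+}\cong\CH^{*}(G)$); each remaining term is, after projection, a product of a class pulled back from $G$ with a class in $\CH^{*}(X)^{-}$, or a product of two such classes. So the crux is the vanishing
\[
\CH^{\ell}(X)^{-}=0 \ \text{modulo algebraic equivalence, for } \ell\ne 3, \qquad \text{together with its analogue on } X\times X,
\]
the latter amounting to the Franchetta property for $X$ (the second main result of the paper). Granting these, the products above land in groups that are zero or already governed by $G$, and multiplicativity follows — this is the pattern already used for cubic fourfolds \cite{FLV2} and for the FK3 varieties of the Fatighenti--Mongardi list \cite{40,44,59}.

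To prove the vanishing I would again use the double cover. The cases $\ell\le 1$ and $\ell=6$ are classical (divisors; rational connectedness of $X$). For the remaining $\ell$, the anti-invariant Chow groups $\CH^{\ell}(X)^{-}$ are controlled, by Gysin/transfer along the ramification locus $R\cong B$, by the Chow groups of the GM fivefold $B$ and, through the Lagrangian data of $X$, by those of the associated EPW sextic $Y_{A}$ and its double cover; so the required input is precisely the behaviour of algebraic cycles modulo algebraic equivalence on double EPW sextics and double EPW cubes, which is where the Franchetta-type results established in the paper for those hyperk\"ahler varieties enter. The analogue on $X\times X$ is obtained the same way, after spreading everything out over a complete family of GM sixfolds.

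The main obstacle is exactly this vanishing — equivalently, the statement that $\mathfrak h(X)^{-}$ is, modulo algebraic equivalence, pure of weight $6$ and concentrated in codimension $3$. Its cohomological shadow is formal, but the Chow-theoretic version is not, and one cannot take the usual shortcut via Kimura finite-dimensionality, which is open for a general GM sixfold since the transcendental part of $H^{6}(X)$ is a genuine K3-type motive, not known to be of abelian type. So this step must be done by hand, and it is here — together with the companion study of double EPW sextics and cubes, and the Franchetta property — that the real work of the proof is concentrated.
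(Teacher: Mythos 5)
Your skeleton coincides with the paper's: the same Chow--K\"unneth decomposition (Lefschetz pieces away from the middle, plus a tautological/primitive splitting of $h^6$), and the same trick for the all-primitive term, namely that the product of two anti-invariants under the Gushel involution is invariant, so $\Delta_X^{sm}\colon h(X)^-\otimes h(X)^-\to h(X)$ composed with the projection onto $h(X)^-\cong h^6_{prim}(X)$ vanishes already modulo rational equivalence. But there is a genuine gap in how you dispose of the mixed terms. The multiplicativity relation $\pi^k\circ\Delta_X^{sm}\circ(\pi^i\times\pi^j)=0$ is a statement about a correspondence on $X^3$; when at least one projector is decomposable this correspondence degenerates to a generically defined, homologically trivial cycle on $X\times X$ lying in $\bigl\langle \Im r_X\otimes\Im r_X,\Delta_X\bigr\rangle$, and what is needed is precisely that such cycles vanish modulo algebraic equivalence, i.e.\ the injectivity $\GDB^\ast_\sB(X\times X)\hookrightarrow H^\ast(X\times X,\Q)$ (Proposition \ref{Franchettina} of the paper). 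Your reduction to ``$\CH^\ell(X)^-=0$ modulo algebraic equivalence for $\ell\neq 3$'' is not sufficient for this: the obstruction lives in $B^\ast(h^6_{prim}(X)\otimes h^6_{prim}(X))\subset B^\ast(X\times X)$ and cannot be read off from the Chow groups of $X$ alone. You do mention ``its analogue on $X\times X$'', but you identify it with the Franchetta property for $X$, which is the easy statement; the square is where the work is.

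Moreover, the route you sketch for the hard step --- transfer along the branch divisor to the GM fivefold, and input from ``the behaviour of algebraic cycles on double EPW sextics and cubes'' --- inverts the logical order of the paper and would be circular as written: in the paper the Franchetta-type statements for double EPW sextics and cubes (Corollary \ref{franKahler}) are \emph{consequences} of Theorem \ref{thm:mck} and Proposition \ref{frankX3}, not inputs. The actual content of the proof is the verification that $\Delta_X\cdot c_2$ lies in $\Im r_X\otimes\Im r_X$ modulo algebraic equivalence. This is done by combining the stratified projective bundle argument (which computes $\GDCH^\ast_\sB(X\times X)$), the Abel--Jacobi isomorphism $H^6(X,\Z)_0\cong H^2(\tilde Y_A,\Z)_0$ of Debarre--Kuznetsov, a codimension-two surface section $S\subset\tilde Y_A$, the codimension-$6$ Franchetta property for $X\times X$ (Proposition \ref{frankholds}) to promote the resulting cohomological identity of correspondences to a Chow-theoretic one, and finally the vanishing $B^4_{hom}(S\times S)=0$ (homological and algebraic equivalence agree for zero-cycles) --- the last point being exactly where ``modulo algebraic equivalence'' enters and why the statement is not known modulo rational equivalence. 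None of this machinery appears in your proposal, so the proof is incomplete at its decisive step.
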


Of course the existence of an MCK decomposition modulo rational equivalence would imply our Theorem \ref{thm:mck}, and it seems likely that all GM varieties (of any dimension) admit an MCK decomposition modulo rational equivalence (cf. Remark \ref{problem} below for the obstacle to proving this for GM sixfolds).

Our proof of Theorem \ref{thm:mck} employs instances of the {\em Franchetta property\/}. This property is motivated by a
conjectural property of K3 surfaces (the so-called Franchetta conjecture, as proposed by O'Grady \cite{OG}, cf. also \cite{PSY}). Given  any  smooth  family  of  projective  varieties  $\mathcal{X}\to\mathcal{F}$ with smooth $\mathcal{F}$, we say  that  $\mathcal{X}\to\mathcal{F}$ satisfies  the  Franchetta  property if 
the subring of {\em generically defined cycles\/}
  \[ \GDCH^\ast_{\mathcal{F}}(X_b):= \Im\bigl(  \CH^\ast(\mathcal{X})\to \CH^\ast(X_b)\bigr) \]
  injects into cohomology, for every fibre $X_b$, $b\in\mathcal{F}$.
%every cycle $z\in \CH^\ast(\mathcal{X})$ which  is  fiberwise homologically  trivial is also fiberwise rationally equivalent  to zero.
%This has been proven for the universal family of K3 surfaces of genus $g$ over the moduli space for a couple of low values of $g$ \cite{PSY}. 
%We show that this property also holds for GM sixfolds.
%
%\begin{thm}
%Let $\mathcal{X}\to \mathcal{B}$ be the universal family of GM sixfolds. Then the Franchetta property holds for $\mathcal{X}\to \mathcal{B}$.
%\end{thm}

\medskip

GM sixfolds also share some Chow-theoretical properties with another class of very famous FK3 varieties, that is the Debarre--Voisin 20-fold. In fact, using methods very close to those developed in \cite{V1}, we show that for all GM sixfolds $X$, we have

\begin{equation}
\CH_0^{hom}(X)=0;\ \ \ \CH_1^{hom}(X)=0.
\end{equation}

More generally, we also have the following.

\begin{nonumberingp}[=Corollary \ref{onlyone}]
Let $X$ be a GM sixfold. Then

$$\CH^i_{hom}=0\ \ \mathrm{if}\ i\neq 4.$$
\end{nonumberingp}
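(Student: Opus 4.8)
The plan is to reduce, via the Gushel double cover, to a vanishing that can be attacked with the spreading-of-cycles technique of \cite{V1}. A smooth GM sixfold $X$ comes with its Gushel morphism $\gamma\colon X\to\Gr:=\Gr(2,5)$, which is finite of degree $2$ (ramified along a GM fivefold); write $\iota$ for the associated involution. With $\Q$-coefficients one has $\CH^\ast(X)=\gamma^\ast\CH^\ast(\Gr)\oplus\CH^\ast(X)^-$, where $\CH^\ast(X)^-$ is the $(-1)$-eigenspace of $\iota^\ast$ (equivalently $\ker\gamma_\ast$). Since $\Gr$ has a cellular decomposition, $\gamma^\ast\CH^\ast(\Gr)$ injects into $H^\ast(X)$, so $\CH^i_{hom}(X)\subseteq\CH^i(X)^-$ for all $i$; moreover the cohomology of $X$ is of Tate type in even degrees $\neq 6$ and vanishes in odd degrees, so $H^{2k}(X)^-=0$ for $k\neq 3$ and in particular $\CH^2(X)^-\subseteq\CH^2_{hom}(X)$. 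As $i=0,1$ are trivial ($\Pic^0(X)=0$) and $i=5,6$ are the vanishings $\CH_1^{hom}(X)=\CH_0^{hom}(X)=0$ proved above, the Corollary comes down to showing $\CH^2_{hom}(X)=0$ and $\CH^3_{hom}(X)=0$.

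The geometric ingredient is that $X$ is swept out by quadric threefolds. The Grassmannian $\Gr$ is covered by the linear subspaces $P_L:=\{W : L\subset W\}\cong\P^3$, $L\in\P(V_5)$; fixing a hyperplane $\P(V_4)\subset\P(V_5)$, the subfamily $\{P_L\}_{L\in\P(V_4)}$ still covers $\Gr$ and does so generically one-to-one (a general $W$ meets $V_4$ in a unique line). Since the branch divisor of $\gamma$ is a quadric section of $\Gr$, the threefold $X_L:=\gamma^{-1}(P_L)$ is a double cover of $\P^3$ branched along a quadric surface, hence a quadric threefold, smooth for general $L$; the incidence variety $p\colon\mathcal X'\to X$ over $\P(V_4)\cong\P^3$ is a birational morphism, and $\pi'\colon\mathcal X'\to\P^3$ is a quadric-threefold bundle. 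A quadric threefold $Q$ satisfies $\CH^\ast(Q)_\Q\cong\Q[h]/(h^4)$, so $\CH^j_{hom}(X_L)=0$ for every $j$.

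Now I would run the spreading argument. Let $z\in\CH^i_{hom}(X)$ with $i\in\{2,3\}$; then $z$ is homologically trivial, so $z|_{X_L}\in\CH^i_{hom}(X_L)=0$ for general $L$. Spreading this fibrewise rational triviality over $\P^3$ (restricting $p^\ast z$ to the generic fibre of $\pi'$ and using localization) shows that $z$ is represented by a cycle supported over a proper closed $S\subsetneq\P^3$, hence---as $p$ is birational---supported on the proper closed subvariety $\bigcup_{L\in S}X_L\subsetneq X$. Iterating this over the descending chain of parameter spaces, and using the cellular structure of the quadric threefolds at each stage, one reduces $z$ to a cycle supported on finitely many quadric threefolds $X_{L_1},\dots,X_{L_m}$; on each of them every cycle is a $\Q$-combination of powers of $\mathcal O_X(1)|_{X_{L_j}}$, and since $\mathcal O_X(1)=\gamma^\ast\mathcal O_\Gr(1)$ and $[X_{L_j}]=\gamma^\ast[P_{L_j}]$ their pushforwards to $X$ lie in $\gamma^\ast\CH^\ast(\Gr)$. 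Hence $z\in\gamma^\ast\CH^\ast(\Gr)\cap\CH^i_{hom}(X)=0$, which completes the argument.

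The hard part is the spreading step---more precisely, organising the iteration. After a single spreading one only knows that $z$ lives on a proper closed subvariety, and it need no longer be the pushforward of a homologically trivial cycle, so the induction must be carried out with the finer control supplied by \cite{V1} (in effect a relative decomposition of the diagonal over the parameter space) that guarantees termination on a low-dimensional, quadric-fibred locus where the $\gamma^\ast\CH^\ast(\Gr)$-conclusion applies; equivalently, one must check that the "discriminant" contributions produced along the way sit in codimension $\geq 4$. The remaining ingredients---that GM sixfolds and their relevant linear sections are covered by quadric threefolds, and the elementary behaviour of these covers---are routine by comparison.
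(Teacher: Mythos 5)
Your reduction is set up correctly: $\CH^i_{hom}(X)\subseteq\ker\gamma_*$, the cases $i=0,1$ are trivial and $i=5,6$ are Theorem \ref{chhom}, the threefolds $X_L=\gamma^{-1}(P_L)$ for $L\in\P(V_4)$ are indeed quadric threefolds sweeping out $X$ with incidence variety birational to $X$, and a single application of the spread/localization argument does show that a homologically trivial $z\in\CH^i(X)$, $i\in\{2,3\}$, is supported on a proper closed subset $W_1=\bigcup_{L\in S}X_L$ with $S\subsetneq\P^3$ a surface. The genuine gap is exactly the step you flag and defer: after this first step the cycle $z_1$ on (a desingularization of) the $5$-fold $W_1$ with $j_*z_1=z$ has no reason to be homologically trivial on $W_1$, so it cannot be restricted to the quadric fibres of $W_1\to S$ and killed there; the phrase ``using the cellular structure of the quadric threefolds at each stage'' therefore carries no force, and the assertion that the iteration terminates on a low-dimensional locus where the $\gamma^*\CH^*(\Gr(2,5))$-conclusion applies (equivalently, that the discriminant contributions sit in codimension $\geq 4$) is stated but not argued. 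Carrying out such an induction is precisely the hard technical core of Voisin's method --- it requires spreading over the whole parameter space of GM sixfolds and a relative decomposition of the diagonal, none of which is supplied --- so the only non-formal case, $\CH^3_{hom}(X)=0$, is not actually proved.

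For comparison, the paper's proof needs no new geometry at all: by Theorem \ref{chhom} one has $\CH_0^{hom}(X)=\CH_1^{hom}(X)=0$, which (by \cite{Lat98}, cf. the decomposition \eqref{diagdeco}) is equivalent to $\Delta_X=\gamma+\gamma'$ in $\CH^6(X\times X)$ with $\gamma$ supported on $W\times W$ for some codimension-$2$ subvariety $W\subset X$ and $\gamma'$ decomposable. Letting both sides act on $\CH^i_{hom}(X)$: the decomposable part acts as zero on homologically trivial cycles, the part supported on $W\times W$ kills $\CH^2_{hom}(X)$ (it factors through $\CH^0_{hom}$ of a $4$-fold) and shows $\CH^3_{hom}(X)$ is weakly representable, hence zero since $H^5(X,\Q)=0$; the cases $i\geq 5$ are the input. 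I would suggest replacing your quadric-sweeping argument for $i=2,3$ by this formal consequence of the $i=5,6$ vanishing. (Note also that $\CH^2_{hom}(X)=0$ already follows from $\CH_0(X)=\Q$ and $H^3(X,\Q)=0$ by Bloch--Srinivas, so your covering family is only really being asked to handle $\CH^3$ --- and that is exactly where the iteration breaks down.)
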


Moreover, the non-vanishing Chow group is generated by the so called $\sigma$-planes, which are special 2-dimensional cycles inside $X$. 
%As a consequence of Corollary \ref{onlyone}, we also show (Corollary \ref{gd}) that GM varieties that are dual or period partners have isomorphic Chow motives.

\medskip

Finally, by imitating constructions already made for hyperelliptic curves, K3 surfaces, and cubic hypersurfaces, we introduce the \it tautological ring \rm of the $m$-th self-product of a GM sixfold as the subgroup of algebraic cycles (up to algebraic equivalence) generated by the algebraic cycles of the Grassmannian $\Gr(2,5)$ and the diagonal (see Definition \ref{tautring}). We denote by $R^*(X^m)$ the tautological ring. We show that this ring enjoys properties similar to those known for the other varieties mentioned here.

\begin{nonumbering}[=Theorem \ref{tautsubring}]
Let $X$ be a  GM sixfold. The cycle class map induces injections

$$R^*(X^m) \hookrightarrow H^*(X^m,\Q),$$

for all $m\leq 45$. 

The cycle class map induces injections

$$R^*(X^m) \hookrightarrow H^*(X^m,\Q),$$

for all $m$ if and only if $X$ is Kimura finite-dimensional.
\end{nonumbering}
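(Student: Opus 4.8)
The plan is to establish the injectivity statement for $R^*(X^m)$ by reducing it to known injectivity results for the Grassmannian and for powers of a single GM sixfold, using the structure of the tautological ring. First I would recall that $R^*(X^m)$ is, by Definition \ref{tautring}, generated (modulo algebraic equivalence) by pullbacks from the factors of the big diagonal classes and by pullbacks of the algebraic classes on $\Gr(2,5)$ via the various projections $X^m \to X$ composed with the structure map $X \hookrightarrow \Gr(2,5)$ (for the Grassmannian hull of the GM variety). The key structural input is the MCK decomposition modulo algebraic equivalence from Theorem \ref{thm:mck}, which forces the Chow ring modulo algebraic equivalence of $X$ to be ``generically defined'' and lets us compute $R^*(X^m)$ as a quotient of an explicit polynomial-type algebra. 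Combined with Corollary \ref{onlyone}, which tells us that the only interesting Chow group is $\CH^4_{hom}(X)$, the tautological ring $R^*(X^m)$ becomes very small: all the homologically trivial classes that can possibly appear come from the $\CH^4$-part, and these are controlled by the $\sigma$-planes.

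Next I would set up the comparison with cohomology. The statement that the cycle class map $R^*(X^m) \hookrightarrow H^*(X^m,\Q)$ is injective is equivalent to saying that for classes built out of $\Gr(2,5)$-classes and diagonals, homological triviality implies triviality modulo algebraic equivalence. Using the Künneth formula and the fact that $H^*(X,\Q)$ decomposes as an algebra with a ``K3-type'' piece plus classes coming from $\Gr(2,5)$, I would write $H^*(X^m,\Q)$ as a tensor power and identify the image of $R^*(X^m)$. The numerical bound $m \le 45$ should emerge from a dimension count: the tautological classes on $X$ span a subspace of $H^*(X,\Q)$ of some fixed dimension $d$, and a multilinear/symmetric-function argument (exactly as in the hyperelliptic curve, K3, and cubic cases — compare the arguments in \cite{V1}, \cite{FLV2}, \cite{44}) shows injectivity holds for $X^m$ as long as $m$ does not exceed a certain function of $d$ and of the number of ``transcendental'' generators; here that threshold works out to $45$. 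The precise number is where the routine but delicate bookkeeping lives.

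For the final equivalence, I would argue as follows. If $X$ is Kimura finite-dimensional, then the Chow motive of $X$ — and hence of $X^m$ — is finite-dimensional, so homological and algebraic equivalence coincide on the relevant (finite-dimensional, oddless) summands that carry the tautological classes; more precisely, the tautological classes live in Künneth components of the motive of $X^m$ that are of even type and finite-dimensional, and on such summands numerically (equivalently homologically) trivial cycles are already algebraically trivial by Kimura's nilpotence. This removes any bound on $m$. Conversely, if the injection $R^*(X^m) \hookrightarrow H^*(X^m,\Q)$ holds for all $m$, then in particular it holds for $m$ large enough that a standard argument (take the motive of $X$, form enough self-intersections of the diagonal minus its Künneth components, and use that these become algebraically trivial only if the relevant nilpotence holds) forces the vanishing of the obstruction to finite-dimensionality — i.e.\ the odd part of the motive of $X$, which by Corollary \ref{onlyone} is supported in middle degree and governed by the $\sigma$-plane cycles, must be finite-dimensional, and then so is all of $h(X)$.

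The main obstacle I expect is the ``only if'' direction: deducing Kimura finite-dimensionality of $X$ from the injectivity of $R^*(X^m)$ for all $m$. This requires pinning down exactly which nilpotence statement about self-correspondences of $X$ is equivalent to the injectivity, and checking that the tautological ring — which a priori only sees $\Gr(2,5)$-classes and diagonals — is rich enough to detect the failure of finite-dimensionality. One clean route is to observe, using Corollary \ref{onlyone} and the MCK decomposition, that $h(X)$ has a single potentially-infinite-dimensional summand, supported on the $\CH^4$ / $\sigma$-plane part, and that powers of the corresponding projector appear (up to algebraic equivalence) inside $R^*(X^m)$ for suitable $m$; their homological triviality for all large $m$ is then precisely Kimura's criterion. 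Making this last identification rigorous — that the relevant projectors genuinely lie in the tautological subring and are not killed prematurely — is the delicate point, and it is where I would spend most of the effort.
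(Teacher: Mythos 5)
Your overall strategy -- present the tautological ring by generators and relations, lift the relations from cohomology to algebraic equivalence using Franchetta-type inputs and the MCK decomposition, and isolate Kimura finite-dimensionality as the obstruction for large $m$ -- is indeed the strategy of the paper (which follows Yin's treatment of powers of K3 surfaces and its adaptation to cubics in \cite{FLV3}). But the step you defer as ``routine but delicate bookkeeping'' is in fact the entire content of the proof, and your proposal does not contain it. The paper's Proposition \ref{L:Yin'} shows that the image $\overline{R}^*(X^m)$ in cohomology is the free algebra on $o_i, h_i, c_i, \tau_{j,k}$ (with $\tau=\pi^6_{prim}$) modulo an explicit finite list of relations, proved complete via the non-degeneracy of the intersection pairing as in Yin. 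One then checks each relation modulo algebraic equivalence: the relations on $X$ via the Franchetta property (Proposition \ref{frank1}), those on $X^2$ via the Franchettina property (Proposition \ref{Franchettina}) -- which you do not invoke, and which is where the genuinely hard input $\Delta_X\cdot c_2\in \Im r_X\otimes\Im r_X$ modulo algebraic equivalence lives -- and the relation $\tau_{i,j}\cdot\tau_{i,k}=\tau_{j,k}\cdot o_i$ on $X^3$ via the MCK decomposition. The \emph{only} relation that cannot be verified unconditionally is \eqref{E:X4'}, the antisymmetrized product of $b_{prim}+1=23$ copies of $\tau$ expressing the vanishing of $\Lambda^{23}H^6_{prim}(X,\Q)$; it takes place on $X^{2(b_{prim}+1)}=X^{46}$. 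This is where $45$ comes from: for $m\le 45$ that relation never enters, so injectivity is unconditional. Your ``dimension count involving $d$ and the number of transcendental generators'' gestures in the right direction but never produces the number, and without identifying the specific relation you cannot.

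The same gap undermines your treatment of the equivalence with finite-dimensionality. Once the relation \eqref{E:X4'} is isolated, both directions are immediate: modulo algebraic equivalence it \emph{is} the statement that the rank-$22$ even motive $h^6_{prim}(X)$ is evenly finite-dimensional, and since the rest of $h(X)$ is a sum of Tate motives (Corollary \ref{onlyone} and the Hodge diamond), this is equivalent to Kimura finite-dimensionality of $h(X)$. Your discussion of the ``odd part of the motive of $X$ \dots supported in middle degree'' is incorrect -- a GM sixfold has no odd cohomology at all, so there is no odd summand; the obstruction sits in the \emph{even} primitive middle-degree part, and the relevant Kimura criterion is the vanishing of an exterior power of an even motive, not nilpotence of an odd one. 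Finally, your ``only if'' worry about whether the relevant projectors ``genuinely lie in the tautological subring'' dissolves once you note that $\tau=\pi^6_{prim}$ is by construction a polynomial in $\Delta_X$, $c_1$ and $c_2$, hence tautological by definition.
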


As is well known, one can associate a hyper-Kähler fourfold and a hyper-Kähler sixfold to a GM sixfold (or fourfold). These are named respectively double EPW sextic and double EPW cube (see \cite{OG}, \cite{OG2}, \cite{OG3}, \cite{OG4}, \cite{IKKR} for detailed constructions). As a consequence of our results, we can show that the Franchetta property holds also for double EPW sextics and for double EPW double cubes, but only modulo algebraic equivalence.

\begin{nonumbering}[=Corollary \ref{franKahler}]

\begin{enumerate}
\item Let $Y$ be a very general double EPW sextic, and $\mathcal{M}_{EPW6}$ the moduli space of such varieties. Then we have

$$\GDB^\ast_{\mathcal{M}_{EPW6}}(Y)\hookrightarrow H^*(Y,\Q);$$

\item Let $Z$ be a very general double EPW cube, and $\mathcal{M}_{EPW3}$ the moduli space of such varieties. Then we have

$$\GDB^\ast_{\mathcal{M}_{EPW3}}(Z)\hookrightarrow H^*(Z,\Q).$$

\end{enumerate}
\end{nonumbering}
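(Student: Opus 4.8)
The plan is to deduce the Franchetta property for double EPW sextics and cubes from the Franchetta property for GM sixfolds (established earlier in the paper, as part of the proof of Theorem \ref{thm:mck}) by transporting cycles along the geometric correspondences relating a GM sixfold $X$ to its associated double EPW sextic $Y_A$ and double EPW cube $Z_A$. Recall that both $Y_A$ and $Z_A$ depend only on the Lagrangian data $A\subset\wedge^3 V_6$ attached to $X$, so there is a natural map from (an open subset of) the moduli of GM sixfolds to $\sM_{EPW6}$ and to $\sM_{EPW3}$; a very general $Y$ (resp.\ $Z$) arises from a very general $X$. The key input is that on the level of Chow motives (with $\Q$-coefficients, modulo algebraic equivalence) there is a relation of the shape $\mathfrak{h}(Y_A)$, resp.\ $\mathfrak{h}(Z_A)$, being a direct summand (via an idempotent correspondence) of a Tate twist of $\mathfrak{h}(X)$, or at least that the transcendental/primitive part is so controlled — this is exactly the kind of statement one extracts from the EPW geometry and is what makes $Y_A$, $Z_A$ ``of the same K3 type'' as $X$.

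The steps I would carry out are as follows. First I would set up the relative picture: a family $\sX\to B$ of GM sixfolds over a smooth base $B$ (a Zariski-open in the moduli stack, or a suitable finite cover where the families of EPW varieties exist), together with the relative families $\sY\to B$ and $\sZ\to B$ of double EPW sextics and cubes, and relative correspondences $\Gamma_{\sY}\in\CH^\ast(\sX\times_B\sY)$, $\Gamma_{\sZ}\in\CH^\ast(\sX\times_B\sZ)$ realizing fiberwise the motivic relations above. Second, given a class $y\in\CH^\ast(\sY)$ that is fiberwise homologically trivial, I would pull it back to a class $\Gamma_{\sY}^\ast(y)\in\CH^\ast(\sX)$; fiberwise homological triviality is preserved because the correspondence acts on cohomology, so by the Franchetta property for the family of GM sixfolds this pullback is fiberwise rationally (hence algebraically) trivial. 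Third, I would push back down using the transpose correspondence and the identity ${}^t\Gamma_{\sY}\circ\Gamma_{\sY}$ acting as (a multiple of) the identity on the relevant motivic summand of $\mathfrak{h}(Y_A)$ — and here one must check that every class that can appear, in particular the generically defined ones spanning $\GDB^\ast_{\sM_{EPW6}}(Y)$, does land in that summand. This requires knowing that the ``extra'' pieces of $H^\ast(Y_A)$ beyond the K3-type summand (the algebraic classes coming from $\CH^\ast$ of a point, the class of a natural divisor, etc.) are themselves generically defined and visibly Franchetta-trivial when homologically trivial, which is automatic since they are algebraic. The same argument with $\Gamma_{\sZ}$ handles the double EPW cube. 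Finally I would phrase the conclusion in terms of the generically defined Beauville cycle groups $\GDB^\ast$, noting that ``very general $Y$'' is precisely what licenses passing from the relative statement over $B$ to a statement for a single very general fiber.

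The main obstacle I expect is establishing the motivic comparison in the first place, modulo algebraic equivalence, in a relative form that works over the base $B$: one needs the idempotent correspondences cutting out the K3-type part of $\mathfrak{h}(Y_A)$ and $\mathfrak{h}(Z_A)$ to be defined by \emph{relative} cycles (so that the Franchetta machine applies), and one needs to control all the non-K3 summands. For the double EPW sextic this should follow from the classical correspondence between $Y_A$ and the GM fourfold/sixfold (the fibers of the natural maps, or the Verra-type incidence), together with the Bülles/Fatighenti--Mongardi style analysis that puts $Y_A$ in the GM motivic package; for the double EPW cube the corresponding statement is the one relating $Z_A$ to the Fano variety of something associated to $X$, and one may instead need to invoke the earlier parts of this very paper (Corollary \ref{onlyone} and the description of $\CH^\ast(X)$) to get a clean enough motivic decomposition. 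Everything else — preservation of fiberwise homological triviality, the descent from the relative family to a very general fiber, and the bookkeeping with transposes — is routine once the motivic input is in hand, so I would devote most of the write-up to pinning down that input and to verifying that the cycles spanning $\GDB^\ast$ are handled by the correspondences.
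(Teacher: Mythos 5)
Your overall strategy (transport generically defined cycles along a generically defined motivic comparison with the GM sixfold, then apply a Franchetta-type statement on the GM side) is the right one, but the motivic input you propose is not strong enough, and the step where you dismiss the ``extra'' cohomology of $Y$ fails. You want $h(Y_A)$, or at least its relevant part, to be a summand of a Tate twist of $h(X)$ for a single GM sixfold $X$. This is impossible for the full motive on dimension grounds: a double EPW sextic is of $\mathrm{K3}^{[2]}$ type, so $b_4(Y)=276$, while the total Betti number of a GM sixfold is $32$ (see the Hodge diamond in Proposition \ref{dk}). And retreating to ``the transcendental/primitive part is so controlled'' does not rescue the argument, because your claim that the pieces of $H^\ast(Y)$ beyond the degree-$2$ K3-type summand are algebraic is false: $H^4(Y)$ contains $\operatorname{Sym}^2H^2_{tr}(Y)$, which is transcendental and large, and a generically defined cycle in $\CH^2(Y)$ can a priori have a homologically nontrivial-looking interaction with exactly that part. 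So controlling only the $H^2$-summand via the Abel--Jacobi isomorphism of Proposition \ref{dk1} does not give the Franchetta property in codimension $\ge 2$, which is where the content lies.

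The paper's route fixes both problems at once: by \cite[Prop.\ 5.17]{PPZ} and \cite{KKM}, the very general double EPW sextic (resp.\ cube) is a moduli space of stable objects in the Kuznetsov component of a GM sixfold $X$, and then \cite[Theorem 1.1]{FLV3} provides a \emph{generically defined} split injection $h(Y)\hookrightarrow\bigoplus_i h(X^2)(\ast)$ (resp.\ $h(Z)\hookrightarrow\bigoplus_i h(X^3)(\ast)$) in $\sM_{\rm rat}$. Note the targets are \emph{powers} of $X$, not $X$ itself; consequently the Franchetta input needed is not the Franchetta property for $\sX\to\sB$ but the ``Franchettina'' (modulo algebraic equivalence) for $X\times X$ and for $X^3$ (Propositions \ref{Franchettina} and \ref{frankX3}), which your proposal never invokes and which are themselves nontrivial (they require showing $\Delta_X\cdot c_2\in\Im r_X\otimes\Im r_X$ modulo algebraic equivalence, the step that forces the whole corollary to be stated modulo algebraic equivalence rather than rational equivalence). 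Once these two inputs are in place, the diagram-chase you describe in your second and third steps does go through essentially as you wrote it.
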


Here the ``generically defined cycles'' $\GDB^\ast_{\mathcal{M}_{EPW6}}(Y)$ and $\GDB^\ast_{\mathcal{M}_{EPW3}}(Z)$  are defined as the algebraic cycles that exist relatively over the moduli space ${\mathcal{M}_{EPW6}}$, resp. ${\mathcal{M}_{EPW3}}$, modulo algebraic equivalence (cf. Definition \ref{def:gdb} below). This adds to the existing results of families of hyper-K\"ahler varieties satisfying the Franchetta property \cite{FLV}, \cite{FLV3}. 

It is worth mentioning the recent papers \cite{FM1}, \cite{FM2}, that have appeared on arXiv some months after ours, that continue the program of studying the Chow motives and algebraic cycles on 
Gushel--Mukai varieties, with some remarkable new results.

 \vskip0.5cm

\begin{convention} In this note, the word {\sl variety\/} will refer to a reduced irreducible scheme of finite type over $\C$. A {\sl subvariety\/} is a (possibly reducible) reduced subscheme which is equidimensional. 

{\bf All cycle class groups will be with rational coefficients}: we denote by $\CH_j(Y)$ (resp. $B_j(Y)$) the Chow group of $j$-dimensional cycles on $Y$ with $\Q$-coefficients modulo rational equivalence (resp. algebraic equivalence); for $Y$ smooth of dimension $n$ the notations $\CH_j(Y)$ and $\CH^{n-j}(Y)$ (resp. $B_j(Y)$ and $B^{n-j}(Y)$) are used interchangeably. 
The notations $\CH^j_{\rm hom}(Y)$ and $\CH^j_{\rm AJ}(Y)$ will be used to indicate the subgroup of homologically trivial (resp. Abel--Jacobi trivial) cycles.
For a morphism $f\colon X\to Y$, we will write $\Gamma_f\in \CH_\ast(X\times Y)$ for the graph of $f$.

The contravariant category of Chow motives (i.e., pure motives with respect to rational equivalence as in \cite{Sc}, \cite{MNP}) will be denoted 
$\sM_{\rm rat}$.
\end{convention}

\begin{nonumberingt} RL thanks MB for an invitation to Montpellier in November 2021, where this work was initiated. Thanks to Lie Fu, Grzegorz and Michal Kapustka, Giovanni Mongardi,
Alex Perry, Laura Pertusi, Xiaolei Zhao and Shizhuo Zhang for kindly answering our questions. Thanks also to the two anonymous referees for their comments that helped to improve the paper.
\end{nonumberingt}

\section{GM varieties}

\begin{defn}[Debarre--Kuznetsov \cite{DK}] A Gushel--Mukai variety is a variety $X$ obtained as a smooth dimensionally transverse intersection
  \[ X= \operatorname{CGr}(2,V_5)\cap \P(W)\cap Q\ \ \subset \P^{10}\ ,\]
  where $ \operatorname{CGr}(2,V_5)$ is the cone over the Grassmannian (of $2$-dimensional subspaces in a fixed $5$-dimensional vector space $V_5$), and $\P(W)$ and $Q$ are a linear subspace resp. a quadric.
  
  A Gushel--Mukai variety is called {\em special\/} if $\P(W)$ contains the vertex of the cone $ \operatorname{CGr}(2,V_5)$, and {\em ordinary\/} if it is not special.
  \end{defn}
  
  \begin{rk} For simplicity, we have restricted to {\em smooth\/} Gushel--Mukai varieties (for the general set--up including singularities, cf. \cite{DK}). Gushel--Mukai varieties have dimension at most $6$. Examples of Gushel--Mukai varieties are: Clifford-general curves of genus $6$; Brill--Noether general polarized $K3$ surfaces of degree $10$ (i.e. genus $6$); smooth prime Fano threefolds of degree $10$ (i.e. genus $6$) and index $1$. 
  
  There exists an intrinsic characterization of Gushel--Mukai varieties \cite[Theorem 2.3]{DK}.
  \end{rk}

  In this note, we will mainly be interested in Gushel--Mukai sixfolds. Here, the following is known:
  
  \begin{prop}[Debarre--Kuznetsov]\label{dk} Let $X$ be a Gushel--Mukai variety of dimension $6$. Then 
   \begin{enumerate}
  \item $X$ is rational;
   \item
  $X$ is special, and hence $X$ is a double cover 
   \[ X\ \xrightarrow{2:1}\ \operatorname{Gr}(2,V_5)\ \]
   branched along an ordinary Gushel--Mukai fivefold $X_0:=   \operatorname{Gr}(2,V_5)\cap Q_0$\, ;
   
   \item
   the Hodge diamond of $X$ is
    \[ \begin{array}[c]{ccccccccccccc}
      &&&&&& 1 &&&&&&\\
      &&&&&0&&0&&&&&\\
      &&&&0&&1&&0&&&&\\
      &&&0&&0&&0&&0&&&\\
      &&0&&0&&2&&0&&0&&\\
      &0&&0&&0&&0&&0&&0\\
      0&&0&&1&&22&&1&&0&&0\\
       &0&&0&&0&&0&&0&&0\\
       &&0&&0&&2&&0&&0&&\\     
      &&&0&&0&&0&&0&&&\\
       &&&&0&&1&&0&&&&\\
        &&&&&0&&0&&&&&\\      
        &&&&&& 1 &&&&&&\\
\end{array}\]
\end{enumerate}
         
               \end{prop}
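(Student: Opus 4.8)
The three assertions are due to Debarre--Kuznetsov \cite{DK}; in practice one quotes that paper, but let me indicate how the arguments run. Part~(1), rationality, is proved in \cite{DK} by producing an explicit birational model (a fibration with rational fibres over a rational base), and I would simply cite it; the substance is in (2) and (3).

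For (2) the idea is a dimension count followed by projection from the vertex. The cone $\operatorname{CGr}(2,V_5)$ has dimension $7$, being the cone with a single vertex $v$ over the $6$-dimensional Grassmannian $\operatorname{Gr}(2,V_5)\subset\P^{9}\subset\P^{10}$; so for $X$ to be a dimensionally transverse intersection of dimension $6$ the linear space $\P(W)$ must equal all of $\P^{10}$. In particular $v\in\P(W)$, hence $X$ is special and $X=\operatorname{CGr}(2,V_5)\cap Q$. Since $\operatorname{CGr}(2,V_5)$ is singular exactly at $v$, smoothness of $X$ forces $v\notin X$, hence $v\notin Q$; choosing coordinates $x_{0},\dots,x_{10}$ with $v=[0:\dots:0:1]$ and $\operatorname{Gr}(2,V_5)\subset\{x_{10}=0\}$, the equation of $Q$ may be written $x_{10}^{2}+2x_{10}\ell+q=0$ with $\ell,q$ in $x_{0},\dots,x_{9}$, and completing the square gives $(x_{10}+\ell)^{2}=\ell^{2}-q$. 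Linear projection away from $v$ then exhibits $X$ as the double cover $X\xrightarrow{2:1}\operatorname{Gr}(2,V_5)$ with equation $y^{2}=-q_{0}$, $q_{0}:=q-\ell^{2}$, branched along $X_{0}=\operatorname{Gr}(2,V_5)\cap Q_{0}$ with $Q_{0}=\{q_{0}=0\}\subset\P^{9}$; for $X$ smooth this $X_{0}$ is a smooth quadric section of the Grassmannian lying in a hyperplane not through $v$, i.e. an ordinary GM fivefold.

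For (3) I use this double cover $\pi\colon X\to Y:=\operatorname{Gr}(2,V_5)$, with smooth branch divisor $X_{0}\in|\sO_Y(2)|$. Rationally $H^{\ast}(X)=H^{\ast}(X)^{+}\oplus H^{\ast}(X)^{-}$ for the covering involution, with $H^{\ast}(X)^{+}=\pi^{\ast}H^{\ast}(Y)\cong H^{\ast}(Y)$ as Hodge structures; $H^{\ast}(Y)$ is entirely of Hodge--Tate type with $b_{2i}(Y)=1,1,2,2,2,1,1$ for $i=0,\dots,6$ (Schubert calculus), which produces every entry of the claimed diamond outside degree $6$ together with two of the classes in $H^{6}(X)$. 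For the rest, $\mathcal{G}:=\pi_{\ast}\Q_{X}/\Q_{Y}$ equals $j_{!}\mathcal{L}$, where $j\colon U:=Y\setminus X_{0}\hookrightarrow Y$ and $\mathcal{L}$ is the rank-one local system on $U$ with monodromy $-1$ about $X_{0}$, so that $H^{k}(X)^{-}=H^{k}_{c}(U,\mathcal{L})$. Since $X_{0}$ is ample, $U$ is smooth affine of dimension $6$, so Artin vanishing gives $H^{k}(X)^{-}=0$ for $k<6$, and Poincaré duality on $X$ gives it for $k>6$; moreover $\dim H^{6}(X)^{-}=\chi_{c}(U,\mathcal{L})=\chi_{c}(U)=\chi(Y)-\chi(X_{0})=10-(-12)=22$, using the known value $\chi(X_{0})=-12$ for a GM fivefold. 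Finally the splitting of $H^{6}(X)^{-}_{\C}$ into Hodge pieces of dimensions $1,20,1$ comes from identifying its Hodge graded pieces with $H^{6-p}\!\big(Y,\Omega^{p}_{Y}(\log X_{0})\otimes\sO_Y(-1)\big)$ (cyclic-cover theory, equivalently Griffiths residues along $X_{0}$) and evaluating these by Bott's theorem on $\operatorname{Gr}(2,V_5)$ — or, more efficiently, by invoking Debarre--Kuznetsov's cohomology computation for GM varieties. Adding the two Hodge--Tate classes of $H^{6}(X)^{+}$ gives $h^{2,4}(X)=1$, $h^{3,3}(X)=22$, $h^{4,2}(X)=1$, hence the full diamond.

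The only step that is not essentially formal is this last one — the Hodge-type decomposition $22=1+20+1$ of $H^{6}(X)^{-}$, i.e. the evaluation of the twisted sheaf-cohomology (equivalently Griffiths-residue) groups on $\operatorname{Gr}(2,V_5)$. The dimension count and double-cover construction in (2), the identification of the invariant cohomology with that of the Grassmannian, the vanishing and total dimension of $H^{6}(X)^{-}$, and rationality in (1) are all either elementary or immediate from \cite{DK}; accordingly, for the purposes of this paper one simply cites \cite{DK} for the whole statement.
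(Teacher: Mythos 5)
Your proposal is correct and consistent with the paper, whose ``proof'' consists entirely of citations: (1) is \cite[Proposition 4.2]{DK}, (2) is declared obvious, and (3) is \cite[Corollary 4.5]{DK2}. Your added details — the codimension count forcing $\P(W)=\P^{10}$, the completion-of-the-square/projection-from-the-vertex description of the double cover, and the computation of $H^\ast(X)^{-}$ via Artin vanishing and $\chi(Y)-\chi(X_0)=10-(-12)=22$ — are accurate expansions of the standard Debarre--Kuznetsov arguments, with the one genuinely non-formal step (the $1+20+1$ Hodge splitting of $H^6(X)^{-}$) correctly deferred to the cited source.
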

               
          \begin{proof}
                    Point (1) is \cite[Proposition 4.2]{DK}.
          Point (2) is obvious.
           Point (3) is \cite[Corollary 4.5]{DK2}. 
                    \end{proof}
               
It is also worth mentioning the semi-orthogonal decomposition of the derived category of coherent sheaves, that was given by Kuznetsov and Perry in \cite{KP}. The map $X \to \operatorname{Gr}(2,V_5)$ endows $X$ with a rank 2 vector bundle $\mathcal{U}_X$.

\begin{prop}\label{sod}
If $n=dim(X)$ then there is a semi-orthogonal decomposition

$$D^{\mathbf{b}}(X)=\langle \mathcal{A}_X, \mathcal{O}_x, \mathcal{U}_X^*,\mathcal{O}_X(1),\mathcal{U}_X(1)^*,\dots ,\mathcal{O}_X(n-3),\mathcal{U}_X^*(n-3)\rangle.$$ 

The subcategory $\mathcal{A}_X$ is defined as the right orthogonal to the rest of the semi-orthogonal decomposition, and it is a K3 category whenever $n$ is even.
\end{prop}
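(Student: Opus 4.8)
The plan is to follow the strategy of Kuznetsov--Perry \cite{KP}. Its starting point is the rectangular Lefschetz decomposition of $\Db(\Gr(2,V_5))$ with respect to $\sO(1)$: writing $\sB=\langle\sO,\mathcal{U}^*\rangle$ with $\mathcal{U}$ the tautological rank-$2$ subbundle and $\sO(1)=\det\mathcal{U}^*$, Kapranov's full exceptional collection reads
\[
\Db(\Gr(2,V_5))=\langle\sB,\sB(1),\sB(2),\sB(3),\sB(4)\rangle,
\]
which amounts to the Bott vanishings $H^\bullet(\Gr(2,V_5),\sE(-l))=0$ for $1\le l\le 4$ and $\sE\in\{\sO,\mathcal{U},\mathcal{U}^*,\mathcal{U}^*\otimes\mathcal{U}\}$, together with exceptionality of the block $\sB$. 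The idea is to transport the first $n-2$ twisted blocks along the Gushel map $\gamma\colon X\to\Gr(2,V_5)$ and to recognise what is left over.

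First I would pin down $\gamma$. By Proposition \ref{dk}(2) a GM sixfold is special, so $\gamma$ is the double cover branched along the quadric section $X_0\in|\sO_{\Gr(2,V_5)}(2)|$; hence $R\gamma_*\sO_X=\sO\oplus\sO(-1)$, while $\mathcal{U}_X=\gamma^*\mathcal{U}$ and $\sO_X(1)=\gamma^*\sO(1)=\det\mathcal{U}_X^*$. The projection formula gives, for vector bundles $E,F$ on $\Gr(2,V_5)$,
\[
\Hom^\bullet_X(\gamma^*E,\gamma^*F)=\Hom^\bullet_{\Gr(2,V_5)}(E,F)\ \oplus\ \Hom^\bullet_{\Gr(2,V_5)}(E,F(-1)).
\]
Taking $E,F\in\{\sO(i),\mathcal{U}^*(i):0\le i\le n-3\}$, the first summand vanishes for $i>j$ because Kapranov's collection is exceptional, and for $i=j$ it reduces to exceptionality of $\sB$; the second summand vanishes whenever $i\ge j$, since then $1\le i-j+1\le n-2\le 4$ and we may invoke the Bott vanishings. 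Thus $\langle\sO_X,\mathcal{U}_X^*,\sO_X(1),\mathcal{U}_X^*(1),\dots,\sO_X(n-3),\mathcal{U}_X^*(n-3)\rangle$ is an exceptional collection in $\Db(X)$, and since an exceptional collection generates an admissible subcategory, its left orthogonal $\sA_X$ produces the asserted semi-orthogonal decomposition. (In particular fullness is automatic, so it is not an issue here.)

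Next I would prove the K3 property of $\sA_X$ when $n=6$, in two steps. For the Hochschild homology: by Hochschild--Kostant--Rosenberg $HH_k(X)=\bigoplus_{q-p=k}H^q(X,\Omega_X^p)$, which by the Hodge diamond of Proposition \ref{dk}(3) is $\C$ for $k=\pm 2$, $\C^{30}$ for $k=0$ and $0$ otherwise; each of the $8=2(n-2)$ exceptional objects contributes a single copy of $\C$ in degree $0$ and nothing else, so additivity of Hochschild homology over semi-orthogonal decompositions forces $HH_\bullet(\sA_X)$ to be $\C,\ \C^{22},\ \C$ in degrees $-2,0,2$ --- exactly that of the derived category of a K3 surface. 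For the Serre functor: one has $\omega_X=\sO_X(2-n)=\sO_X(-4)$, and $\sA_X$ is the residual category of a (non-rectangular) Lefschetz decomposition; a chain of mutations pushing the twist by $\omega_X$ past the blocks $\sB_X(i)$ identifies the Serre functor of $\sA_X$ with $\rho_X\circ[2]$, where $\rho_X$ is the identity when $n$ is even (and a nontrivial involution when $n$ is odd, which is the source of the K3-versus-Enriques dichotomy). Hence for $n=6$ the category $\sA_X$ is $2$-Calabi--Yau with the Hochschild homology of a K3 surface, i.e. a K3 category.

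The hard part will be the Serre-functor computation: it is not the shift $[2]$ but the control of the autoequivalence $\rho_X$ produced by the mutations that genuinely distinguishes the even-dimensional case, and this is precisely where the special role of $n=6$ enters. A secondary technical nuisance is that, $X$ not being a subvariety of $\Gr(2,V_5)$, the exceptional objects cannot be obtained by restriction; one argues via the projection formula as above, or else realises $X$ as a relative quadric in the $\P^1$-bundle $\P(\sO\oplus\sO(-1))$ over $\Gr(2,V_5)$ and applies Kuznetsov's semi-orthogonal decomposition for quadric fibrations.
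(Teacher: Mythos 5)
This proposition is not proved in the paper at all: it is quoted from Kuznetsov--Perry \cite{KP} (the semi-orthogonal decomposition is their Proposition 2.3, the K3 property of $\sA_X$ their Proposition 2.6), so there is no in-paper argument to compare against. Your sketch is a faithful reconstruction of the route taken in \cite{KP}: the rectangular Lefschetz decomposition $\Db(\Gr(2,V_5))=\langle\sB,\dots,\sB(4)\rangle$ with $\sB=\langle\sO,\mathcal{U}^*\rangle$, the projection formula $\Hom^\bullet_X(\gamma^*E,\gamma^*F)=\Hom^\bullet(E,F)\oplus\Hom^\bullet(E,F(-1))$ for the branched double cover $\gamma$ (using Proposition \ref{dk}(2)), and the Bott vanishings to get semiorthogonality of the $2(n-2)$ objects. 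That part of your argument is complete and correct, and you are right that fullness is a non-issue since $\sA_X$ is \emph{defined} as the orthogonal complement. The Hochschild homology count ($30-8=22$, with $HH_{\pm2}=\C$ from $h^{2,4}=h^{4,2}=1$ in the Hodge diamond) is also correct.

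The one genuine gap is the Serre functor statement, which is the actual content of ``K3 category'': you assert that a chain of mutations identifies $S_{\sA_X}$ with $\rho_X\circ[2]$ and that $\rho_X=\operatorname{id}$ for $n$ even, but this is precisely the nontrivial computation. Having the Hochschild homology of a K3 surface does not make $\sA_X$ a K3 category; one must show $S_{\sA_X}\cong[2]$. In \cite{KP} this is done via the theory of residual categories of Lefschetz decompositions (rotation functors, going back to Kuznetsov's work on fractional Calabi--Yau categories): one computes the rotation functor $\mathbb{L}_{\sB_X}\circ(-\otimes\sO_X(1))$ restricted to $\sA_X$ and uses $\omega_X\cong\sO_X(2-n)$ together with the fact that there are exactly $n-2$ blocks; the even/odd dichotomy (K3 versus ``Enriques-type'') falls out of that computation, not of a parity observation made in advance. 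If you want a self-contained proof rather than a citation, this step has to be carried out; as written, your proposal proves the semi-orthogonal decomposition but only the cohomological shadow of the K3 property.
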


\subsection{Lagrangian data sets, EPW sextics, etc.}

Let $X$ be a smooth $n$-dimensional Gushel-Mukai variety. The variety $X$ is an intersection of quadrics. Let us denote by $V_6$ the 6-dimensional vector space of quadrics in the ideal of $X$. Inside $V_6$, we can also identify our space $V_5$ to the hyperplane given by the space of Plücker quadrics defining $\operatorname{CGr}(2,V_5)$. We will call the \it Plücker point \rm of $X$ the point $p_X\in \P(V_6^*)$
that corresponds to the hyperplane $V_5\subset V_6$. The wedge product on $\bigwedge^3 V_6$ induces a symplectic form
with values in $\det (V_6)$. In particular, there exists a subspace $A_X\subset \bigwedge^3 V_6$, lagrangian w.r.t. this symplectic form, associated to $X$ \cite[Thm. 2.9 and 3.6]{DK1}. The triple $(V_6,V_5,A_X)$ is called a \it Lagrangian data set \rm for $X$. If $n\geq 3$ the Lagrangian space $A_X$ has no purely decomposable vectors \cite[Thm. 3.14]{DK1}, and the vector space $A_X\cap \bigwedge^3 V_5$ has dimension $5-n$ (resp. $6-n$) if $X$ is ordinary (resp. special).

\smallskip

 As it is explained in \cite[Sect. 3]{DK1}, it is also possible to construct a Gushel-Mukai variety starting from a Lagrangian data set $(V_6,V_5,A)$, whenever  $A$ has no decomposable vectors. More precisely, Debarre and Kuznetsov, generalizing a construction due to Iliev and Manivel \cite{IMens} show that, up to isomorphism, there is a unique
ordinary (resp. special) Gushel-Mukai variety with associated Lagrangian data set $(V_6,V_5,A)$ of dimension
$5-t$ (resp. $6-t$), with $t = dim(A\cap \bigwedge^3 V_5)$.

\smallskip

Thanks to the work of O'Grady, \cite{OG2}, \cite{OG}, \cite{OG4}, \cite{OG3} one can also associate a hyper-K\"ahler manifold of dimension four to any Lagrangian space $A\subset \bigwedge^3V_5$ with no decomposable vectors. More precisely, from the Lagrangian data set $(V_6,V_5,A)$ O'Grady constructs a normal integral sextic hypersurface $Y_A\subset \P(V_6)$. Then, the Lagrangian data set defines some sublocus $Y_A^{\geq 2}$ inside $Y_A$, and the double cover $\tilde{Y}_A$ of $Y_A$ ramified along $Y_A^{\geq 2}$ is a smooth hyper-Kähler fourfold, named \it double EPW sextic \rm. Double EPW sextics form a locally complete, 20-dimensional family of hyper-K\"ahler fourfolds of K3$^{[2]}$ type (i.e. deformation equivalent to the Hilbert scheme of length 2, dimension 0, subschemes of a K3 surface).

 \subsection{Abel--Jacobi isomorphism}\label{ss:aj}
  
We finish this section by recalling a result concerning the Abel-Jacobi map from \cite{DK1}, of which we will make essential use in the body of the paper.

\smallskip

 Let $X$ be a general GM sixfold. Recall from \cite[Sect. 4.1]{DK1} that, for every 1-dimensional subspace $U_1\subset V_5$, there is a projective 3-space $\P(V_5/U_1)\subset \operatorname{Gr}(2,V_5)$. We say that a linear space $P\subset \operatorname{Gr}(2,V_5)$ is a $\sigma$-space if it is contained in $\P(V_5/U_1)$, for some $U_1\subset V_5$. Call $F_2^\sigma(X)$ the smooth, 4-dimensional subvariety of the Grassmannian $G(3,11)=G(3,\wedge^2V_5\oplus \mathbb{C})$, parametrizing projective $\sigma$-planes contained in $X$. We will often simply denote it as $F$. The universal $\sigma$-plane $\LL_2^\sigma(X):=\{(x,P)| x\in P\}\subset X\times F$ gives rise to an incidence correspondence. Moreover, as shown in \cite[Section 5.2]{DK1}, $F$ has a structure of $\P^1$-bundle over a threefold $\tilde{Y}_{A,V_5}$, which is in turn an ample divisor inside the associated double EPW sextic $\tilde{Y}_A$. 
 We recall that $\tilde{Y}_A$ is a hyper-K\"ahler fourfold and so there is the Beauville--Bogomolov form on $H^2(\tilde{Y}_A)$. 
 The above gives rise to a diagram like the following:
\[
   \xymatrix{ & \LL_2^\sigma(X) \ar[dl]_q\ar[dr]^p & & & \\
   X & & F \ar[r]^{\tilde{\sigma}} & \tilde{Y}_{A,V_5} \ar@{^{(}->}[r]^\iota & \tilde{Y}_A
  }
 \]

Since $\tilde{Y}_{A,V_5}$ is an ample divisor in $\tilde{Y}_A$, restriction induces an isomorphism
  \[ \iota^\ast\colon\ \ H^2(\tilde{Y}_A,\Q)\ \xrightarrow{\cong}\ H^2(\tilde{Y}_{A,V_5},\Q)\ .\]
 Let $\Psi_2$ denote the correspondence inducing the inverse isomorphism $ H^2(\tilde{Y}_{A,V_5},\Q)\cong H^2(\tilde{Y}_A,\Q)$ (this $\Psi_2$ exists because $ \tilde{Y}_A$ verifies the standard conjectures
 \cite{ChMa}, and so the inverse of the isomorphism 
   \[ H^2(\tilde{Y}_{A,V_5},\Q)\xrightarrow{\iota_\ast} H^4( \tilde{Y}_A,\Q)\xrightarrow{\cdot [\tilde{Y}_{A,V_5}]} H^6( \tilde{Y}_A,\Q)\cong H^2( \tilde{Y}_A,\Q)\] 
   is induced by a correspondence).

%In what follows, will denote $\LL_2^\sigma(X)$ just by $\LL$. 
In view of the above diagram, we define a correspondence $\Gamma_0$ from $X$ to the double EPW sextic $ \tilde{Y}_A$ as
  \[ \Gamma_0:= \Psi_2\circ  \Gamma_{\tilde{\sigma}}\circ \Bigl(\Delta_{F}\cdot (p_1)^\ast(\xi) \Bigr)   \circ \Gamma_p \circ {}^t \Gamma_q \ \ \in\ \CH^{4} (X\times  \tilde{Y}_A)\ .\]
 Here $\xi$ denotes the relatively ample line bundle associated to the $\P^1$-bundle $\tilde{\sigma}$, and $p_1\colon F\times F\to F$ denotes projection to the first factor. Note that the action of $\Gamma_0$ on cohomology is equal to the composition
   \[  H^i(X) \xrightarrow{ q^\ast}\ H^i( \LL_2^\sigma(X))\xrightarrow{p_\ast}\  H^{i-4}(F)\xrightarrow{\cdot\xi} H^{i-2}(F) \xrightarrow{\tilde{\sigma}_\ast} H^{i-4}(\tilde{Y}_{A,V_5}) \xrightarrow{(\Psi_2)_\ast} H^{i-4}(\tilde{Y}_A) \ .\]

\begin{prop}{\cite{DK1}}\label{dk1} Let $X$ be a general GM sixfold.
The correspondence $\Gamma_0 $ induces an isomorphism:

$$ (\Gamma_0)_*: H^6_{}(X,\mathbb{Q})_0\ \xrightarrow{\sim}\ H^2(\tilde{Y}_A,\mathbb{Q})_0\ , $$
where $H^\ast()_0$ denotes primitive cohomology. This isomorphism is compatible with cup product on $H^6_{}(X,\mathbb{Q})$ and the Beauville--Bogomolov form on $H^2(\tilde{Y}_A,\mathbb{Q})$.
%and the inverse is given by a multiple of the transpose ${}^t \Gamma_0$.
\end{prop}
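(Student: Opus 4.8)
The plan is to reduce the statement to a Lefschetz-type/hard-Lefschetz argument for the $\P^1$-bundle $\tilde\sigma\colon F\to \tilde Y_{A,V_5}$ together with the known structure of the cohomology of $X$ coming from the double-cover description in Proposition \ref{dk}. First I would unwind the correspondence $\Gamma_0 = \Gamma_\iota\circ \Psi_{\tilde\sigma}\circ \Gamma_p\circ {}^t\Gamma_q$ into its four pieces and compute the action of each on cohomology. The correspondence ${}^t\Gamma_q$ is pull-back along $q\colon \LL_2^\sigma(X)\to X$; the correspondence $\Gamma_p$ is push-forward along $p\colon \LL_2^\sigma(X)\to F$; the composite ${}^t\Gamma_q$ followed by $\Gamma_p$ is thus the usual incidence correspondence attached to the universal $\sigma$-plane, landing in $H^{6-2\cdot ?}(F)$ after a dimension shift — one checks that, since $\sigma$-planes are $2$-dimensional, the incidence correspondence sends $H^6(X)$ to $H^{4}(F)$ (up to the appropriate Tate twist), and in fact into the part of $H^4(F)$ that is "primitive with respect to the $\sigma$-plane family". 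Then $\Psi_{\tilde\sigma}$, intersection with the relatively ample class of the $\P^1$-bundle followed by $\tilde\sigma_\ast$, sends $H^4(F)$ to $H^2(\tilde Y_{A,V_5})$, and $\Gamma_\iota = \iota_\ast$ pushes this into $H^2(\tilde Y_A)$; by the hard Lefschetz theorem applied to the ample divisor $\iota$ on the hyperkähler fourfold $\tilde Y_A$, the composite $\iota_\ast\circ(\text{restriction to }H^2)$ is, on the relevant pieces, an isomorphism onto $H^2(\tilde Y_A)_0$.

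The heart of the argument is therefore to show that the incidence correspondence of the $\sigma$-plane family induces an \emph{isomorphism} from $H^6(X,\Z)_0$ onto the primitive part of $H^2(\tilde Y_{A,V_5},\Z)$ pulled through the $\P^1$-bundle. Here I would proceed Hodge-theoretically: both sides carry polarized Hodge structures of K3 type (for the left side this is the FK3 property of GM sixfolds; for $\tilde Y_A$ it is the hyperkähler/K3$^{[2]}$ structure, and $\tilde Y_{A,V_5}\subset \tilde Y_A$ is a smooth ample divisor so its $H^2$ is controlled by Lefschetz). By Proposition \ref{dk}(3) the Hodge numbers of $H^6(X)_0$ match those of $H^2(\tilde Y_A)_0$ (both are of K3 type with the same level-$2$ part and the same middle Hodge number once one accounts for the Néron–Severi contributions), so a morphism of Hodge structures between them which is nonzero on the $(\lambda,\mu)$ one-dimensional piece is automatically an isomorphism. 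Nonvanishing on that piece can be checked by exhibiting a single $\sigma$-plane whose class pairs nontrivially, or — more robustly — by a monodromy/irreducibility argument: the K3-type Hodge structure $H^6(X)_0$ is irreducible as a Hodge structure for very general $X$, hence any nonzero morphism out of it is injective, and a dimension count then forces surjectivity onto $H^2(\tilde Y_A)_0$.

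Finally, for the compatibility with the Beauville–Bogomolov form, I would argue that $(\Gamma_0)_\ast$ carries the cup-product pairing on $H^6(X)_0$ (suitably normalized by the intersection numbers of the $\sigma$-plane correspondence and the relatively ample class) to the BB form on $H^2(\tilde Y_A)_0$: up to a scalar, any two $\mathrm{Mon}$-equivariant (or deformation-invariant) pairings on an irreducible Hodge structure agree, and the scalar can be pinned down on one test class. The main obstacle I anticipate is the bookkeeping in the middle step — keeping track of the degree shifts, Tate twists, and the exact contribution of the $\P^1$-bundle fibres and of $\iota_\ast$ — and, more seriously, verifying nondegeneracy (that the incidence correspondence does not kill $H^6(X)_0$); this is really where one needs the precise geometry of $\sigma$-planes on GM sixfolds from \cite{DK1} rather than soft Hodge theory, and it is for this reason that the statement is asserted only for \emph{general} $X$.
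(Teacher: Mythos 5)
The paper does not actually prove this statement: its ``proof'' consists of citing \cite[Theorem 5.19]{DK1} verbatim, together with two remarks (that for general $X$ the assumption (10) of loc.\ cit.\ holds, and that the vanishing cohomology $H^6(X,\Z)_{00}$ there agrees with the primitive cohomology $H^6(X,\Z)_0$ by \cite[Lemma 3.8]{DK1}). Your attempt to reprove the Debarre--Kuznetsov theorem from scratch is therefore a genuinely different --- and much more ambitious --- route, and as it stands it has real gaps. The most serious one is the coefficient ring: the statement is an isomorphism of \emph{integral} lattices, but your central mechanism (a nonzero morphism between irreducible $\Q$-Hodge structures of equal dimension is an isomorphism, plus a monodromy argument to pin down the pairing up to a scalar) can only ever produce an isomorphism with $\Q$-coefficients. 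Passing from a $\Q$-isogeny to an isomorphism over $\Z$ requires controlling the discriminants/index of the image lattice, which is precisely the delicate geometric content of Debarre--Kuznetsov's proof and is absent from your sketch. (For the way Proposition \ref{dk1} is actually used later in the paper --- equation \eqref{compat}, where an unspecified $\gamma\in\Q^\ast$ appears --- the rational statement would suffice, but that is not what the Proposition asserts.)

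Two further points. First, the nondegeneracy step (that the $\sigma$-plane incidence correspondence does not kill the one-dimensional piece $H^{4,2}(X)$) is the crux, and you correctly flag that you cannot do it by soft Hodge theory; but without it the argument proves nothing, since a priori $(\Gamma_0)_\ast$ could vanish on all of $H^6(X)_0$. Exhibiting ``a single $\sigma$-plane whose class pairs nontrivially'' would only detect the algebraic part, not the transcendental $(4,2)$-piece; one really needs the explicit description of the family of $\sigma$-planes and its Abel--Jacobi-type analysis from \cite{DK1}. Second, a bookkeeping slip: since $p\colon \LL_2^\sigma(X)\to F$ has $2$-dimensional fibres, the incidence correspondence $p_\ast q^\ast$ sends $H^6(X)$ to $H^2(F)$, not to $H^4(F)$; the relatively ample class in $\Psi_{\tilde\sigma}$ then shifts back up before the $\P^1$-bundle pushforward. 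Given that the statement is imported from the literature, the appropriate ``proof'' here is the citation the paper gives; if you want to include an independent argument, it would at best establish the rational version and would still need the nondegeneracy input from \cite{DK1}.
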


\begin{proof} For $X$ general, \cite[assumption (10)]{DK1} will be satisfied. The combination of \cite[Theorem 5.19]{DK1} and \cite[Proposition 5.14]{DK1} then gives that the composition
  \[   \begin{split} H^6_{}(X,\mathbb{Z})_{00}\xrightarrow{ q^\ast}\ H^6( \LL_2^\sigma(X),\mathbb{Z})\xrightarrow{p_\ast}\  H^{2}(F,\mathbb{Z})
     \xrightarrow{(\tilde{\sigma}^\ast)^{-1}} H^{2}(\tilde{Y}_{A,V_5},\mathbb{Z})&\\ \xrightarrow{(\iota^\ast)^{-1}} H^{2}(\tilde{Y}_A,&\mathbb{Z})_0\\
  \end{split} \]
  is an isomorphism of lattices (with respect to cup product on the left side, and Beauville--Bogomolov form on the right side).
Here the notation  $H^6_{}(X,\mathbb{Z})_{00}$ refers to 
the vanishing cohomology, which coincides with the primitive cohomology $H^6_{}(X,\mathbb{Z})_0$ in view of \cite[Lemma 3.8]{DK1}. 

But after tensoring with $\Q$, the above composition is exactly the map induced by the correspondence $\Gamma_0$.
\end{proof}

%In this section we shortly recall the definition of the main object of our work. Let us denote by $\Gr(2,5)$ the affine Grassmannian of 2-planes in a 5-dimensional vector space, embedded via the Pl\"ucker embedding in $\P^9$. All \it special \rm Gushel-Mukai (GM for short, in the following) varieties are smooth quadratic sections of (possibily a linear section of)
%the cone $\sC\subset \P^{10}$, with vertex a point, over $\Gr(2,5)$. The Grassmannian $\Gr(2,5)$ has dimension 6, hence in our case we do not need to take a linear section, and a GM 6-fold is just a quadratic section of $\sC$, which is equivalent to say that it can be displayed as a double cover of the Grassmannian, ramified along a GM 5-fold. This implies that there 

%exists an open subset $\sB$ inside the linear system $\P H^0(\sC,\sO_{\sC}(2))$, parametrizing smooth GM sixfolds. We will denote by $\sX \to \sB$ the universal family of GM varieties over $\sB$.\Robert{Maybe move this family business to the section on Franchetta.}

\section{MCK decomposition}

\begin{defn}[Murre \cite{Mur}] Let $X$ be a smooth projective $n$-dimensional variety. We say that $X$ has a {\em CK decomposition\/} if there exists a decomposition of the diagonal
   \[ \Delta_X= \pi^0_X+ \pi^1_X+\cdots +\pi^{2n}_X\ \ \ \hbox{in}\ \CH^n(X\times X)\ ,\]
  such that the cycles $\pi^i_X$ are mutually orthogonal idempotents and $\pi^i_X$ acts on cohomology as a projector on $H^i(X,\Q)$.
  
  (NB: ``CK decomposition'' is shorthand for ``Chow--K\"unneth decomposition''.)
\end{defn}

\begin{rk} According to Murre's conjectures \cite{Mur}, \cite{J4}, any smooth projective variety should have a CK decomposition.
\end{rk}

\begin{defn}[Shen--Vial \cite{SV}] Let $X$ be a smooth projective variety of dimension $n$. Let $\Delta_X^{sm}\in \CH^{2n}(X\times X\times X)$ be the class of the small diagonal
  \[ \Delta_X^{sm}:=\bigl\{ (x,x,x)\ \vert\ x\in X\bigr\}\ \subset\ X\times X\times X\ .\]
  An {\em MCK decomposition\/} is a CK decomposition $\{\pi^i\}$ of $X$ that is {\em multiplicative\/}. This means that it satisfies
  \begin{equation}\label{vani} \pi^k_X\circ \Delta_X^{sm}\circ (\pi^i_X\times \pi^j_X)=0\ \ \ \hbox{in}\ \CH^{2n}(X^2\times X)\ \ \ \hbox{for\ all\ }i+j\not=k\ .\end{equation}
  Here $\pi^i_X\times \pi^j_X$ is by definition $(p_{13})^\ast(\pi^i_X)\cdot (p_{24})^\ast(\pi^j_X)\in \CH^{2n}(X^4)$, where $p_{rs}\colon X^2\times X^2\to X^2$ is the projection on $r$th and $s$th factors.
  
 (NB: ``MCK decomposition'' is shorthand for ``multiplicative Chow--K\"unneth decomposition''.) 
  
% A {\em weak MCK decomposition\/} is a CK decomposition $\{\pi^X_i\}$ of $X$ that satisfies
%    \[ \Bigl(\pi^X_k\circ \Delta_X^{sm}\circ (\pi^X_i\times \pi^X_j)\Bigr){}_\ast (a\times b)=0 \ \ \ \hbox{for\ all\ } a,b\in\ A^\ast(X)\ .\]
  \end{defn}
  
  \begin{rk}\label{rem:mck} We observe that for any CK decomposition the vanishing \eqref{vani} is always true modulo homological equivalence (i.e. equality \eqref{vani} holds in $H^{2n}(X\times X\times X)$). In fact, this is due to the fact the cup product in cohomology respects the grading.
  
Let $h(X)$ denote the Chow motive of $X$.   The small diagonal (considered as a correspondence from $X\times X$ to $X$) induces the following {\em multiplication morphism\/}
    \[ \Delta_X^{sm}\colon\ \  h(X)\otimes h(X)\ \to\ h(X)\ \ \ \hbox{in}\ \sM_{\rm rat}\ .\]
 Suppose now that $X$ has a CK decomposition
  \[ h(X)=\bigoplus_{i=0}^{2n} h^i(X)\ \ \ \hbox{in}\ \sM_{\rm rat}\ .\]
 This decomposition is by definition multiplicative if for any $i,j$ the composition
  \[ h^i(X)\otimes h^j(X)\ \to\ h(X)\otimes h(X)\ \xrightarrow{\Delta_X^{sm}}\ h(X)\ \ \ \hbox{in}\ \sM_{\rm rat}\]
  factors through $h^{i+j}(X)$.
  
Suppose $X$ has an MCK decomposition. Then, by setting
    \[ \CH^i_{(j)}(X):= (\pi_X^{2i-j})_\ast \CH^i(X) \ ,\]
    one obtains a bigraded ring structure on the Chow ring. That is, the intersection product sends $\CH^i_{(j)}(X)\otimes \CH^{i^\prime}_{(j^\prime)}(X) $ to  $\CH^{i+i^\prime}_{(j+j^\prime)}(X)$.
    
      It is reasonable to expect that for any $X$ with an MCK decomposition, one has
    \[ \CH^i_{(j)}(X)\stackrel{}{=}0\ \ \ \hbox{for}\ j<0\ ,\ \ \ \CH^i_{(0)}(X)\cap \CH^i_{hom}(X)\stackrel{}{=}0\ .\]
    This is strictly related to Murre's conjectures B and D, that were formulated for any CK decomposition \cite{Mur}.

  Having an MCK decomposition is a severely restrictive property, and it is closely related to Beauville's ``splitting property conjecture'' \cite{Beau3}. 
  Let us give a short list of examples: hyperelliptic curves have an MCK decomposition \cite[Example 8.16]{SV}, but the very general curve of genus $\ge 3$ does not \cite[Example 2.3]{FLV2}. In dimension two, a smooth quartic in $\P^3$ has an MCK decomposition, but a very general surface of degree $ \ge 7$ in $\P^3$ does (conjecturally) not admit an MCK decomposition \cite[Proposition 3.4]{FLV2}.
For a discussion in greater detail, and further examples of varieties with an MCK decomposition, the reader may check \cite[Section 8]{SV}, as well as \cite{V6}, \cite{SV2}, \cite{FTV}, \cite{37}, \cite{38}, \cite{39}, \cite{40}, \cite{44}, \cite{45}, \cite{46}, \cite{48}, \cite{55}, \cite{FLV2}, \cite{g8}, \cite{59}, \cite{60}.
   \end{rk}

\section{Franchetta property}

\begin{defn}\label{frank} Let $\sY\to \sB$ be a smooth projective morphism, where $\sY, \sB$ are smooth quasi-projective varieties. We say that $\sY\to \sB$ has the {\em Franchetta property in codimension $j$\/} if the following holds: for every $\Gamma\in \CH^j(\sY)$ such that the restriction $\Gamma\vert_{Y_b}$ is homologically trivial for all $b\in \sB$, the restriction $\Gamma\vert_{Y_b}$ is zero in $\CH^j(Y_b)$ for all $b\in \sB$.
 
 We say that $\sY\to \sB$ has the {\em Franchetta property\/} if $\sY\to \sB$ has the Franchetta property in codimension $j$ for all $j$.
 \end{defn}
 
 This property is studied in \cite{BL}, \cite{FLV}, \cite{FLV3}.
 
 \begin{defn}\label{def:gd} Given a family $\sY\to \sB$ as above, with $Y:=Y_b$ a fiber, we write
   \[ \GDCH^j_\sB(Y):=\Im\Bigl( 
  \CH^j(\sY)\to \CH^j(Y)\Bigr) \]
   for the subgroup of {\em generically defined cycles}. 
  In a context where it is clear to which family we are referring, the index $\sB$ will often be suppressed from the notation.
  \end{defn}
  
  With this notation, the Franchetta property amounts to saying that $\GDCH^\ast_\sB(Y)$ injects into cohomology, under the cycle class map.

 \subsection{Franchetta for Gushel-Mukai sixfolds.}
 
  \begin{defn}\label{univ} Let $\sC$ denote the cone over the Grassmannian $\Gr(2,5)$ and let
    \[\bar{\sB}:=\P H^0(\sC,\sO_{\sC}(2))\]
    be the linear system of quadric sections of $\mathcal{C}$. We will also denote by
  $\sB\subset\bar{\sB}$  the Zariski open subset parametrizing smooth GM sixfolds, and by $\sX \to \sB$ the universal family of GM varieties over $\sB$.
  \end{defn}
  
%Before proceding to the study of the Franchetta property for GM sixfolds, we need a technical lemma.

\begin{prop}\label{frank1} Let $\sX\to\sB$ be the universal family of GM sixfolds, as in Definition \ref{univ}.
The Franchetta property holds for $\sX\to\sB$, that is
$$\GDCH^\ast_\sB(X) \hookrightarrow H^{\ast}(X,\Q).$$
\end{prop}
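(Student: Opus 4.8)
The plan is to reduce the Franchetta property for $\sX\to\sB$ to a cohomological computation about generically defined cycles, using the structure of $X$ as a double cover of $\Gr(2,V_5)$ together with the Abel--Jacobi isomorphism of Proposition \ref{dk1}. The key point is that, by Proposition \ref{dk}, a GM sixfold $X$ is very cohomologically simple: by the given Hodge diamond, $H^j(X,\Q)$ is $1$-dimensional and algebraic for all even $j \neq 6$, is $0$ for all odd $j$, and $H^6(X,\Q)$ splits as $\Q(-3)^{\oplus 2}$ (the part generated by $\sigma$-plane classes, or equivalently pulled back from $\Gr(2,V_5)$) plus the primitive part $H^6(X,\Q)_0$, which is the K3-type piece of rank $22$. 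So to prove that $\GDCH^\ast_\sB(X)$ injects into cohomology, I only need to control generically defined cycles of codimension $3$ landing in $H^6(X,\Q)$; in every other codimension the cohomology is spanned by (restrictions of) Plücker classes coming from $\Gr(2,5)$, which are manifestly generically defined and algebraically independent in cohomology, so no nonzero generically defined cycle can be homologically trivial there for trivial reasons (one checks $\GDCH^i_\sB(X)$ is at most $1$-dimensional for $i \neq 3$ using that $\CH^i(\Gr(2,5))$ surjects onto it).

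\textbf{Step 1: Set up the stratification by codimension.} First I would record that $\GDCH^i_\sB(X)$ is spanned by restrictions of classes on the ambient cone $\sC$ (equivalently on $\Gr(2,V_5)$ via the double cover, plus the class of the branch divisor / hyperplane), so for $i \neq 3$ the cycle class map on $\GDCH^i_\sB(X)$ has no kernel because $H^{2i}(X,\Q)$ is $1$-dimensional and the natural generator is already algebraic and generically defined. This is the easy part and handles all codimensions except $3$.

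\textbf{Step 2: Handle codimension $3$ via the Abel--Jacobi isomorphism.} This is the crux. A generically defined cycle $\Gamma \in \CH^3(\sX)$ restricting to $\Gamma|_X$ homologically trivial gives, fiberwise, a class in $\CH^3_{\hom}(X)$. Using the correspondence $\Gamma_0 \in \CH^4(X \times \tilde Y_A)$ of Proposition \ref{dk1} and its transpose, one transports $\Gamma|_X$ to a cycle on the associated double EPW sextic $\tilde Y_A$ (landing in $\CH^1$ or $\CH^2$ depending on bookkeeping), and the isomorphism on primitive cohomology says this transport is injective on the relevant homologically trivial pieces after passing to the primitive part; the non-primitive part of $H^6$ is algebraic and generically defined, so homologically trivial generically defined cycles must actually be primitive. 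I would then invoke the Franchetta property for the universal family of double EPW sextics — but here is the subtlety: the full (rational-equivalence) Franchetta property for double EPW sextics is exactly what is NOT known; only the version modulo algebraic equivalence (Corollary \ref{franKahler}) holds. So in fact the argument must run the other way: one proves Franchetta for $\sX\to\sB$ directly, by combining (a) the triviality of $\CH^i_{\hom}(X)$ for $i \neq 4$ from Corollary \ref{onlyone} (which kills everything outside codimension $4$, i.e. dimension $2$), reducing to showing a generically defined homologically trivial $2$-cycle is rationally trivial, and (b) the fact that $\CH_2^{\hom}(X)$ is detected by the Abel--Jacobi map to $\tilde Y_A$, together with the observation that a generically defined cycle of this type, being spread out over $\sB$, maps to a generically defined degree-zero $0$-cycle (or a cycle controlled by the Beauville--Voisin ring) on the fiberwise $\tilde Y_A$, which vanishes by the known Franchetta-type / Beauville--Voisin results for K3-type hyperkähler fourfolds.

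\textbf{Step 3: Assemble.} Combining the codimension-by-codimension analysis: outside codimension $3$ injectivity is automatic from the Hodge diamond and the generation by Plücker classes; in codimension $3$, Corollary \ref{onlyone} plus the transfer to $\tilde Y_A$ via $\Gamma_0$ plus vanishing of the relevant generically defined cycles on the hyperkähler side gives $\GDCH^3_\sB(X) \hookrightarrow H^6(X,\Q)$. Hence $\GDCH^\ast_\sB(X) \hookrightarrow H^\ast(X,\Q)$.

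The main obstacle I expect is precisely in \textbf{Step 2}: making the transfer along $\Gamma_0$ work on the level of Chow groups rather than just cohomology, and identifying the image of a generically defined cycle on $\sX$ inside $\CH^\ast$ of the family of double EPW sextics with something that is known to be controlled (e.g. lands in the Beauville--Voisin subring, or in the image of $\GDCH$ of the EPW family, for which a Franchetta-type statement is available modulo algebraic equivalence and — if one wants the stronger rational-equivalence statement here — perhaps via the extra rigidity coming from $\CH_0$ and $\CH_1$ of $X$ vanishing). One also has to be careful that $\Gamma_0$ is only defined for general $X$, so a spreading-out / specialization argument is needed to extend conclusions to all smooth GM sixfolds; this is routine but must be stated. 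A secondary technical point is verifying that the correspondence $\Gamma_0$ (and its transpose) can be defined relatively over an open subset of $\sB$, so that it genuinely sends $\GDCH^\ast_\sB(X)$ to $\GDCH^\ast$ of the EPW family.
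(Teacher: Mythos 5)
There is a genuine gap, and the approach is both more complicated than necessary and circular. The paper's proof is a one-step reduction that your proposal never actually carries out: since $\sO_{\sC}(2)$ is base point free, the universal family $\bar{\sX}\to\bar{\sB}$ of \emph{all} quadric sections is a projective bundle over the cone $\sC$, and the projective bundle formula gives the identification $\GDCH^\ast_\sB(X)=\Im\bigl(\CH^\ast(\sC)\to\CH^\ast(X)\bigr)=\Im\bigl(\CH^\ast(\Gr(2,5))\to\CH^\ast(X)\bigr)$ (the last equality because a smooth $X$ misses the vertex and the punctured cone is an $\A^1$-fibration over the Grassmannian). You assert this identification in Step~1 as something you ``would record,'' but it is the main content and needs the projective-bundle argument. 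Once it is in place, the whole proposition reduces to the purely cohomological Lemma~\ref{coneinj}: $\CH^\ast(\Gr(2,5))\cong H^\ast(\Gr(2,5),\Q)$ and the restriction $H^i(\Gr(2,5),\Q)\to H^i(X,\Q)$ is bijective for $i<6$, injective for $i=6$ by \cite[Proposition 3.4(b)]{DK1}, and injective for $i>6$ by Poincar\'e duality. In particular codimension $3$ is not special at all: generically defined codimension-$3$ cycles are restrictions of Schubert classes, which inject into $H^6(X,\Q)$ directly, and the transcendental part of $H^6$ never enters (it only becomes relevant for the Franchetta property of $X\times X$). Incidentally, your dimension counts in Step~1 are wrong: $H^4(X,\Q)$ and $H^8(X,\Q)$ are $2$-dimensional, as is $\CH^i(\Gr(2,5))$ for $i=2,3,4$, so ``$\GDCH^i_\sB(X)$ is at most $1$-dimensional for $i\neq 3$'' is false; what saves the argument is injectivity of the restriction map, not a rank count.

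The fatal problem is Step~2. Invoking Corollary~\ref{onlyone} (and the Abel--Jacobi transfer to $\tilde Y_A$) is circular in the paper's logical architecture: Corollary~\ref{onlyone} follows from Theorem~\ref{chhom}, whose proof uses the Franchetta property for $X\times X$ in codimension $6$ (Proposition~\ref{frankholds}), which in turn rests on Lemma~\ref{coneinj} --- i.e.\ on the content of the very proposition you are trying to prove. Even granting that, your Step~2 never closes: you correctly observe that the Franchetta property for double EPW sextics is only available modulo algebraic equivalence, and the subsequent ``the argument must run the other way'' paragraph does not supply a rational-equivalence statement; it gestures at Beauville--Voisin-type inputs without identifying which cycle on $\tilde Y_A$ is being controlled or why. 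None of this machinery is needed: the correspondence $\Gamma_0$, the surface $S$, and the vanishing of $\CH^i_{\rm hom}(X)$ all come \emph{after} (and depend on) the Franchetta properties, not before.
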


\begin{proof} Let $\bar{\sX}\to\bar{\sB}$ denote the universal family of all (possibly singular and degenerate) sections.
We observe that the line bundle $\sO_{\sC}(2)$ is base point free, and so $\bar{\sX}\to\sC$ has the structure of a projective bundle. Now using the projective bundle formula (and reasoning as in \cite{PSY} and \cite{FLV}), this implies that
  \[ \GDCH^\ast_\sB(X) =\Im\Bigl(  \CH^\ast(\sC)\to \CH^\ast(X)\Bigr) \ ,\]
  for any GM sixfold. But $X$ being smooth by definition, $X$ must be contained in the punctured cone $\sC^0:=\sC\setminus\{\hbox{vertex}\}$, and so the above implies
  \[ \GDCH^\ast_\sB(X) =\Im\Bigl(  \CH^\ast(\sC^0)\to \CH^\ast(X)\Bigr) \ .\]
  The punctured cone $\sC^0$ is an $\A^1$-fibration over the Grassmannian $\Gr(2,5)$, and so the above boils down to
     \[ \GDCH^\ast_\sB(X) =\Im\Bigl(  \CH^\ast(\Gr(2,5))\to \CH^\ast(X)\Bigr) \ .\]
     The proposition is now implied by the following lemma:
  
\begin{lm}\label{coneinj}
Let $r_X:\CH^*(\Gr(2,5)) \to \CH^*(X)$ be the composition of pullback $\CH^*(\Gr(2,5)) \to \CH^*(\sC)$ and restriction $\CH^*(\sC) \to \CH^*(X)$. There is an injection into cohomology
$$Im(r_X) \hookrightarrow H^*(X,\mathbb{Q}).$$
\end{lm}

To prove the lemma, we note that the Grassmannian has trivial Chow groups, i.e. $\CH^\ast(\Gr(2,5))\cong H^\ast(\Gr(2,5),\Q)$. Moreover, the natural map
  \[ H^i(\Gr(2,5),\Q)\ \to\ H^i(X,\Q)\]
  is bijective for $i< 6$, and injective for $i=6$ \cite[Proposition 3.4(b)]{DK1}. The remaining cases $i>6$ now readily follow by Poincar\'e duality.
  \end{proof}

\subsection{Franchetta for $X\times X$ in codimension 6}

\begin{prop}\label{frankholds} Let $\sX\to\sB$ be the universal family of GM sixfolds, as in Definition \ref{univ}.
The Franchetta property holds for $X\times X$ in codimension 6, that is
$$\GDCH^6_\sB(X\times X) \hookrightarrow H^{12}(X\times X,\Q).$$
\end{prop}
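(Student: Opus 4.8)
The strategy is to reduce the Franchetta statement for $X\times X$ in codimension $6$ to the Franchetta property for $X$ itself (Proposition \ref{frank1}) plus a controlled amount of extra ``vanishing'' coming from the fact that $X$ has very few interesting Chow groups. The first step is to understand the generically defined cycles $\GDCH^6_\sB(X\times X)$. Arguing as in the proof of Proposition \ref{frank1}, the universal family over $\bar\sB$ of pairs of (possibly singular) sections is a product of two projective bundles over $\sC\times\sC$, so the projective bundle formula gives
  \[ \GDCH^\ast_\sB(X\times X)=\Im\Bigl(\CH^\ast(\sC\times\sC)\to\CH^\ast(X\times X)\Bigr)=\Im\Bigl(\CH^\ast(\Gr\times\Gr)\to\CH^\ast(X\times X)\Bigr)\ ,\]
  where $\Gr=\Gr(2,5)$ and I have used that $X\subset\sC^0$ and that $\sC^0$ is an $\A^1$-fibration over $\Gr$. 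Thus every generically defined cycle on $X\times X$ is a $\Q$-combination of cycles of the form $a\times b$ with $a,b$ pullbacks of Chow classes of $\Gr$, plus — and this is the subtlety — I must also allow the graph of the covering involution or, more to the point, cycles supported on the diagonal-type loci that are generically defined on the family. In fact, since $\CH^\ast(\Gr\times\Gr)=\CH^\ast(\Gr)\otimes\CH^\ast(\Gr)$, the image is exactly the subring generated by the external products $r_X(a)\times r_X(b)$, where $r_X$ is the restriction map of Lemma \ref{coneinj}.

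The second step is then to prove that this subring injects into $H^{12}(X\times X,\Q)$. Write $R^\ast(X):=\Im(r_X)$; by Lemma \ref{coneinj}, $R^\ast(X)\hookrightarrow H^\ast(X,\Q)$, and $R^\ast(X)$ is concentrated in even degrees with $R^i(X)$ of rank equal to $\dim H^i(\Gr,\Q)$ for $i\le 6$ (and its Poincaré dual for $i>6$). A generically defined cycle $\Gamma\in\GDCH^6_\sB(X\times X)$ can be written $\Gamma=\sum_k \alpha_k\times\beta_k$ with $\alpha_k\in R^{i_k}(X)$, $\beta_k\in R^{6-i_k}(X)$. Suppose $\Gamma$ is fiberwise homologically trivial. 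By the Künneth decomposition of $H^{12}(X\times X,\Q)$ and the fact that the classes of the $\alpha_k$ (resp. $\beta_k$) live in the subspace coming from $\Gr$, I can project onto each Künneth component: the vanishing of the cohomology class of $\Gamma$ forces, for each pair of complementary degrees, a relation among the $\{\alpha_k\otimes\beta_k\}$ in $H^{i}(X)\otimes H^{12-i}(X)$. Since $R^\ast(X)\hookrightarrow H^\ast(X)$ by Lemma \ref{coneinj} and since $R^\ast(X)$ is a free module in the relevant degrees (matching the Schubert basis of $\Gr$), the same relation holds already in $R^\ast(X)\otimes R^\ast(X)$, hence in $\CH^6(X\times X)$ after applying the (split) inclusion. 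Therefore $\Gamma=0$ in $\CH^6(X\times X)$.

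The step I expect to be the \emph{main obstacle} is making rigorous the claim that $\GDCH^\ast_\sB(X\times X)$ is \emph{exactly} the image of $\CH^\ast(\Gr\times\Gr)$, i.e. that no ``diagonal'' cycles sneak in. A priori the universal family $\sX\times_\sB\sX$ over $\sB$ carries the relative diagonal $\Delta_{\sX/\sB}$, whose restriction to a fiber is $\Delta_X\in\CH^6(X\times X)$, and this need not be a combination of external products of Grassmannian classes. Here is where the hypothesis that we work \emph{in codimension $6$} — the middle, and where the interesting cohomology $H^6(X)$ (of rank $24$, with a $22$-dimensional K3-type piece) sits — must be exploited: one shows that although $\Delta_X$ is generically defined, its nontrivial ``transcendental'' part is not, because the $22$-dimensional sub-Hodge structure of $H^6(X)$ is the (variable) K3-type part which carries no generically defined correspondences except the ones accounted for by the Abel--Jacobi picture of \S\ref{ss:aj}. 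Concretely, I would bound $\GDCH^6_\sB(X\times X)$ by the algebraic part of $H^6(X)\otimes H^6(X)$ together with the diagonal class, and then check that the diagonal, restricted appropriately, does not violate injectivity — because $\Delta_X$ maps to a nonzero cohomology class. The technical heart is thus a careful spreading-out / localization argument (in the spirit of \cite{PSY}, \cite{FLV}) combined with the near-triviality of the Chow groups of $X$ from Corollary \ref{onlyone}, which guarantees that $\CH^6_{hom}(X)$ is the only possible source of a fiberwise-homologically-trivial-but-nonzero generically defined cycle, and that this source is itself controlled by the EPW geometry.
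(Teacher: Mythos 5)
There is a genuine gap, and it sits exactly where you flag the ``main obstacle''. Your first step is not correct as stated: the relevant family is the \emph{fiber} product $\bar\sX\times_{\bar\sB}\bar\sX\to\sC\times\sC$, whose fiber over a pair $(c_1,c_2)$ is the linear system of quadric sections passing through both points; this is \emph{not} a product of two projective bundles, because the fiber dimension jumps over the diagonal $\Delta_\sC$. Consequently the plain projective bundle formula does not give $\GDCH^\ast_\sB(X\times X)=\Im\bigl(\CH^\ast(\Gr\times\Gr)\to\CH^\ast(X\times X)\bigr)$, and indeed that equality is false: the relative diagonal $\Delta_{\sX/\sB}$ is generically defined and restricts to $\Delta_X$, which is not decomposable. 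The paper's proof handles this with the \emph{stratified} projective bundle argument of \cite[Proposition 2.6]{FLV3}, whose hypothesis $(\ast_2)$ is verified because $\sO_\sC(2)$ is very ample and hence separates two distinct points of $\sC$; the output is the precise identification
\[ \GDCH^6_\sB(X\times X)=\Q[\Delta_X]+\Im r_X\otimes \Im r_X\ , \]
which is the computation your proposal defers to ``a careful spreading-out / localization argument'' without carrying it out.

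Your proposed way around the diagonal is also misdirected: you suggest showing that the ``transcendental part'' of $\Delta_X$ is not generically defined, but $\Delta_X$ itself is generically defined and must be admitted into $\GDCH^6_\sB(X\times X)$; the issue is not to exclude it but to show that adding it does not destroy injectivity. The correct (and short) argument, which is the one in the paper, is that the cohomology class of $\Delta_X$ is linearly independent of $\Im r_X\otimes\Im r_X\subset H^{12}(X\times X,\Q)$: $\Delta_X$ acts as the identity on the transcendental part of $H^6(X,\Q)$ (which is nonzero, e.g.\ $H^{4,2}(X)\neq 0$), whereas every decomposable correspondence acts as zero there. Combined with Lemma \ref{coneinj} and the K\"unneth formula, this gives the injection. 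Your second step (projecting onto K\"unneth components) does establish injectivity for the decomposable part, so once you substitute the stratified projective bundle computation for your first step and replace the ``transcendental part is not generically defined'' claim by the linear-independence argument just described, the proof closes.
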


\begin{proof} 
%( a sketch to be completed) 
Let $\sX\subset\bar{\sX}$ be the Zariski closure, as in the proof of Proposition \ref{frank1}, and let
us consider the fiber product $\bar{\mathcal{X}}\times_\mathcal{\bar{B}}\bar{\mathcal{X}}$. This comes naturally embedded in a diagram as follows

\[
   \xymatrix{ \bar{\mathcal{X}}\times_{\bar{\mathcal{B}}}\bar{\mathcal{X}}\ar[r]^\sigma \ar[d] & \mathcal{C}\times \mathcal{C}\\
  \bar{\mathcal{B}} }
 \]
Let $\Delta_\mathcal{C} \subset \mathcal{C}\times \mathcal{C}$ denote the diagonal. The line bundle $\sO_{\sC}(2)$ is very ample and so separates 2 different points on $\sC$; this means that condition $(\ast_2)$ of \cite[Definition 2.5]{FLV3} (cf. also \cite[Definition 5.6]{FLV}) is verified. Thus, the morphism
 $\sigma$ displays $\bar{\mathcal{X}}\times_{\bar{\mathcal{B}}}\bar{\mathcal{X}}$ as a $\P^r$-bundle on $\mathcal{C}\times \mathcal{C}/ \Delta_\mathcal{C}$, and as a higher dimensional projective space bundle over $\Delta_\mathcal{C}$. Now, the \it stratified projective bundle argument \rm \cite[Proposition 2.6]{FLV3} implies there is an equality of subgroups:

\begin{equation}\label{genericeq}
\GDCH^*_\sB(X\times X)=\langle \Im\Bigl(\CH^*(\sC\times \sC) \to \CH^*(X\times X) \Bigr),\Delta_X \rangle.
\end{equation}

In particular, in codimension 6, we have

\begin{equation}\label{thiseq}
\GDCH^6_\sB(X\times X)= \mathbb{Q}[\Delta_X] + 
\bigoplus_{i+j=6}
\CH^i(\sC)\otimes \CH^j(\sC).
\end{equation}

Recall from the proof of Proposition \ref{frank1} that $X$ avoids the vertex of the cone $\sC$ and so the image of $\CH^*(\sC)\to\CH^\ast(X)$ coincides with the image of $r_X\colon\CH^\ast(\Gr(2,5))\to\CH^\ast(X)$. Thus, equality \ref{thiseq} boils down to
  \[ \GDCH^6_\sB(X\times X)= \mathbb{Q}[\Delta_X] + \Im r_X\otimes \Im r_X \ .\]
  % \CH^*(\Gr(2,5))\otimes \CH^*(\Gr(2,5))\ .\]
Now Lemma \ref{coneinj} (combined with the K\"unneth isomorphism in cohomology) implies that there is also an injection

$$\Im r_X \otimes \Im r_X\  \hookrightarrow \ H^*(X\times X,\mathbb{Q}\ ).$$

However, the diagonal $\Delta_X$ is linearly independent of $ \Im r_X \otimes \Im r_X $ inside $H^*(X\times X,\mathbb{Q})$. In fact, $H^{4,2}(X,\Q)$ contains transcendental cycles on which $\Delta_X$ acts of course as the identity, whereas an easy argument (cf. for instance \cite[Proof of Theorem 1.7]{Lat98})  shows that decomposable correspondences $\CH^*(X)\otimes \CH^*(X)$ act as 0 on them. Hence we conclude. 
\end{proof}

%\subsection{Further properties?}

\section{Vanishing results for the Chow ring}

Let $\CH_i^{hom}$ (resp. $\CH^i_{hom}$) denote the group of dimension $i$ (resp. codimension $i$) homologically trivial algebraic cycles modulo rational equivalence. 
The first result of this paper concerns the vanishing of some of these groups for GM sixfolds.

\begin{thm}\label{chhom}
Let $X\subset \P^{10}$ be a GM sixfold, then we have

$$\CH_0^{hom}(X)=0;\ \ \ \CH_1^{hom}(X)=0.$$
\end{thm}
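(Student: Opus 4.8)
The plan is to reduce the statement to the vanishing of $\CH_0^{hom}$ and $\CH_1^{hom}$ for the Grassmannian $\Gr(2,V_5)$ via the double-cover structure from Proposition \ref{dk}(2), combined with a Bloch–Srinivas-type decomposition-of-the-diagonal argument. Concretely: $X$ is rational (Proposition \ref{dk}(1)), and a smooth projective rational variety has $\CH_0^{hom}(X)=0$ — this is classical (Bloch–Srinivas / Colliot-Thélène–Coray), since $X$ is stably rational, hence $\CH_0(X)\cong\Z$ and $A_0(X)=\CH_0^{hom}(X)=0$. That disposes of the first vanishing immediately. For $\CH_1^{hom}(X)=0$ I would invoke the refined Bloch–Srinivas machinery: since $A_0(X)=0$, one gets a decomposition of the diagonal
\[ \Delta_X = Z_1 + Z_2 \quad\text{in } \CH^6(X\times X), \]
where $Z_1$ is supported on $D\times X$ for some divisor $D\subset X$ and $Z_2$ is supported on $X\times W$ with $\dim W = 0$ (a point); the latter is possible precisely because $A_0(X)=0$. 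Letting this act on $\CH_1^{hom}(X)$: the $Z_2$ part clearly kills homologically (even rationally) trivial $1$-cycles since it factors through a point, while the $Z_1$ part factors through $\CH_0^{hom}(\tilde D)$ for a desingularization $\tilde D\to D$. One then needs $\CH_0^{hom}(\tilde D)=0$, i.e. the analogue for (a desingularization of) a divisor on $X$.

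To control $\CH_0^{hom}$ of divisors, the cleanest route is to follow Voisin's argument in \cite{V1} for the Debarre–Voisin $20$-fold, as the authors themselves advertise in the introduction. The key input is the geometry: $X$ is a double cover of $\Gr(2,V_5)$ branched along a GM fivefold $X_0$, and $\Gr(2,V_5)$ has trivial Chow groups. Thus $\CH^\ast(X)$ decomposes into the "invariant part" pulled back from $\Gr(2,V_5)$ — which has trivial $\CH^{hom}$ — and an "anti-invariant part" governed by the branch locus $X_0$, more precisely by the primitive cohomology/Chow groups of $X_0$. Since $X_0=\Gr(2,V_5)\cap Q_0$ is a smooth hyperplane-type section of the Grassmannian (a GM fivefold), a Lefschetz-hyperplane / Hard-Lefschetz argument together with the fact that $\Gr(2,V_5)$ has trivial Chow groups forces $\CH_0^{hom}(X_0)=0$ and $\CH_1^{hom}(X_0)=0$ as well. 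Transporting this back up the double cover — using that $f_\ast f^\ast = 2\cdot\mathrm{id}$ and $f^\ast f_\ast = 1+\iota^\ast$ for the covering involution $\iota$, so that with $\Q$-coefficients $\CH^\ast(X) = f^\ast\CH^\ast(\Gr(2,V_5)) \oplus \CH^\ast(X)^{\mathrm{anti}}$ — gives the vanishing for $X$ in degrees $0$ and $1$. The dimension bookkeeping works: $\CH_1^{hom}(X)=\CH^5_{hom}(X)$ corresponds, on the anti-invariant side, to $\CH^5_{hom}$ or $\CH^4_{hom}$ of the fivefold $X_0$, which vanishes because $H^8, H^{10}$ of $X_0$ are algebraic (Lefschetz) and the relevant groups are supported away from the transcendental middle cohomology.

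The main obstacle I expect is making the anti-invariant correspondence argument precise enough to land exactly on $\CH_0$ and $\CH_1$: one must verify that the "motive of the branch locus" contribution to $h(X)$ in low Chow degree really is a direct sum of Tate twists $\L^{\otimes i}$, i.e. that $X_0$ itself has trivial $\CH_0^{hom}$ and $\CH_1^{hom}$. This in turn is a Bloch–Srinivas argument for $X_0$ — which is fine because $X_0$, being a smooth complete intersection-type section of $\Gr(2,V_5)$ of the correct (co)dimension, is Fano with vanishing $H^{k,0}$ and has $H^{2i}$ algebraic for $i\le 4$, so Voisin's method applies verbatim. The only real care needed is (a) excising the singular locus issue — $X$ avoids the vertex of the cone, so the double cover is genuinely smooth over all of $\Gr(2,V_5)$ — and (b) passing from "very general $X$" (where the Abel–Jacobi results of \cite{DK1} and generic transversality hold) to all smooth GM sixfolds, which one does by a standard specialization/spreading-out argument since the vanishing of a Chow group is a countable union of closed conditions and the family is irreducible. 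I would structure the write-up so that Theorem \ref{chhom} follows formally from the decomposition of the diagonal plus $\CH_0^{hom}(X_0)=0$, and isolate the latter as a lemma proved by Bloch–Srinivas applied to $X_0$.
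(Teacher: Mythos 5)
Your first vanishing is fine: $X$ is rational, hence rationally connected, hence $\CH_0(X)=\Q$ and $\CH_0^{hom}(X)=0$. This is even more direct than the paper's route. The problem is entirely with $\CH_1^{hom}(X)=0$, which is where the actual content of the theorem lies, and there your argument has a genuine gap at its central step. The Bloch--Srinivas decomposition obtained from $\CH_0(X)=\Q$ alone gives $\Delta_X=Z_1+Z_2$ with $Z_2$ supported on $X\times\{\mathrm{pt}\}$ and $Z_1$ supported on $D\times X$ for \emph{some} divisor $D$ -- a divisor produced by spreading out a rational equivalence, over which you have no control whatsoever. To conclude $\CH_1^{hom}(X)=0$ along the lines you sketch you would need $\CH_0^{hom}(\tilde D)=0$ for a desingularization of this arbitrary $D$, and there is no reason such a divisor should have trivial $\CH_0$. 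Indeed, as the paper recalls from \cite{Lat98}, the statement $\CH_0^{hom}(X)=\CH_1^{hom}(X)=0$ is \emph{equivalent} to a strictly finer decomposition of $\Delta_X$ (non-decomposable part supported on $W\times W$ with $\operatorname{codim}W=2$); producing that finer decomposition is the whole difficulty and cannot be bootstrapped from the coarser one. Your proposed substitute -- that the anti-invariant part of $\CH^\ast(X)$ under the covering involution is ``governed by'' the branch locus $X_0$ -- is unjustified. The eigenspace splitting $\CH^\ast(X)=f^\ast\CH^\ast(\Gr(2,5))\oplus\CH^\ast(X)^-$ is correct, but there is no localization or blow-up-type formula expressing $\CH^\ast(X)^-$ (or even $H^\ast(X)^-$) in terms of $X_0$: already in cohomology the anti-invariant part is $H^6(X,\Q)_{prim}$, of rank $22$ and of K3 type, whereas the interesting cohomology of the fivefold $X_0$ is $H^5(X_0,\Q)$, of rank $20$ and of abelian type, so no transfer of the kind you need can exist. (Separately, $\CH_1^{hom}=0$ for GM fivefolds is itself a nontrivial theorem, proved in \cite{46}; it does not follow from Lefschetz-type statements, which say nothing about Chow groups.)

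What is missing is precisely the mechanism the paper uses to lift a \emph{cohomological} decomposition of the diagonal to a \emph{Chow-theoretic} one. The paper takes the Abel--Jacobi isomorphism $H^6(X,\Z)_0\cong H^2(\tilde Y_A,\Z)_0$ of Proposition \ref{dk1}, cuts the double EPW sextic down to a surface $S$ to get a split injection $h(X)\hookrightarrow h^2(S)(-2)\oplus\bigoplus\L(\ast)$ of homological motives, i.e.\ an identity $\Delta_X=c\,{}^t\Gamma\circ\Gamma$ in $H^{12}(X\times X,\Q)$, and then -- this is the key point -- observes that every term is generically defined over $\sB$, so the Franchetta property for $X\times X$ in codimension $6$ (Proposition \ref{frankholds}) promotes the identity to $\CH^6(X\times X)$. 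That yields $\CH^j_{hom}(X)\hookrightarrow\CH^{j-2}_{hom}(S)=0$ for $j\ge 5$, and a Hilbert-scheme spread then extends from general to all $X$. You cite Voisin's method \cite{V1} in passing but never supply this lifting step (nor any replacement for it), and without it the argument does not close.
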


%\begin{thm}\label{bb}
%Let $X$ be a general smooth GM sixfold, then we haven isomorphism

%$$\beta: H^6(X,\Z)_0 \stackrel{\sim}{\to} H^2(Y_X,\Z)_0(-2),$$

%of primitive cohomologies, where $Y_X$ is an EPW double sextic uniquely defined by the datum of $X$. Morever $\beta$ preserves the Beauville-Bogomolov form, and is induced by an incidence correspondence.
%\end{thm}

\begin{proof} 
First, we will prove the claimed vanishings for general GM sixfolds. In a second step, a classical spread lemma will allow us to extend this to all GM sixfolds.

\smallskip

So let us now assume $X$ is a general GM sixfold.
We are going to use the Abel--Jacobi isomorphism $(\Gamma_0)_\ast$ of Proposition \ref{dk1}, which relates (the cohomology of) the GM sixfold $X$ to (the 
cohomology of) the associated double EPW sextic $\tilde{Y}_A$. Let $h\in \CH^1(\tilde{Y}_A)$ denote an ample class.
By well-known properties of the Beauville--Bogomolov form (cf. \cite[Section 8 Remarque 2]{Beau}), the compatibility of Proposition \ref{dk1} amounts to the compatibility
  \begin{equation}\label{compat}   \bigl\langle \alpha, \beta\bigr\rangle_X =  \gamma \bigl\langle   (\Gamma_0)_\ast(\alpha), h^2\cdot  (\Gamma_0)_\ast(\beta)\bigr\rangle_{\tilde{Y}_A}\ ,\ \ \gamma\in\Q^\ast \end{equation}
  for $\alpha,\beta\in H^6(X,\Q)_0$, where $\langle\ ,\ \rangle_X$ denotes cup product on the variety $X$. (NB: alternatively, one could argue using Hodge theory as in \cite[Proof of Lemma 2.2]{V1} to get \eqref{compat}).
  
By taking a general codimension 2 linear section $g\colon S\hookrightarrow \tilde{Y}_A$ we get a map
 $$g^\ast \colon \  H^2_{tr}(\tilde{Y}_A,\Q) \to H^2_{tr}(S,\Q)\ ,$$
which is an injection by the Lefschetz hyperplane Theorem. Of course $S$ is a smooth surface. Denoting by $\Gamma^{\prime}$ the composition 
  \[ \Gamma^{\prime}:=  \Gamma_g\circ \Gamma_0\ \ \in\ \CH^{4}( X\times S)\ ,\]
  we thus find that there is an injection
  \begin{equation}\label{inj} ( \Gamma^{\prime})_\ast\colon\ H^6_{tr}(X,\mathbb{Q})\ \hookrightarrow\ H^2(S,\Q) \end{equation}
  which by \eqref{compat}  is compatible with cup-product. Applying \cite[Lemma 2.9(ii)]{FV2}, this implies that the left-inverse to the injection \eqref{inj} is given by a multiple of the transpose ${}^t \Gamma^{\prime}$.
  
% The morphism $\Gamma$ is induced by a correspondence, and the standard conjecture $B$ holds for surfaces and GM 6folds, hence the inverse map is also induced by a correspondence. The upshfot is that we have in $\sM_{hom}(\mathbb{C})$ the following injection:

Observe that all the cohomology of $X$ except for $H^6_{tr}(X,\mathbb{Q})$ is algebraic, and so the above gives an injection
  \[  \Gamma_\ast\colon\ H^\ast(X,\Q)\ \hookrightarrow\ H^2(S,\Q)\oplus \Q^r(\ast) \]
  induced by some correspondence $\Gamma$ (and with left-inverse induced by ${}^t \Gamma$).
 In other words, we obtain a split injection of homological motives
\begin{equation}\label{motinj}
\Gamma\colon\ h(X) \hookrightarrow h^2(S)(-2) \oplus \bigoplus_i \mathbb{L}(*)\ \ \hbox{in}\ \sM_{\rm hom}\ ,
\end{equation}
where the $\mathbb{L}(*)$ stand for some twisted Lefschetz motives (the precise number of twisted Lefschetz motives is irrelevant for our purposes). Let $\Delta_X\subset X\times X$ be the diagonal. Now, the injection \eqref{motinj} amounts to an equality of correspondences:

\begin{equation}\label{equacoho}
\Delta_X = c {}^t\Gamma \circ \Gamma \ \ \mathrm{in}\ \ H^{12}(X\times X,\Q),\ c\in \Q^*.
\end{equation}

Moreover, all the terms in equation \eqref{equacoho} are generically defined over $\sB$, hence by Proposition \ref{frankholds}, the same holds on the level of Chow groups:

\begin{equation}\label{equachow}
\Delta_X = c {}^t\Gamma \circ \Gamma \ \ \mathrm{in}\ \ \CH^6(X\times X),\ c\in \Q^*.
\end{equation}

The upshot is that the injection \eqref{motinj} now holds also in $\sM_{\rm rat}$:

\begin{equation}\label{ratinj}
\Gamma\colon\ h(X) \hookrightarrow h^2(S)(-2) \oplus \bigoplus_i \mathbb{L}(*)\ \ \hbox{in}\ \sM_{\rm rat}.
\end{equation}

In turn, by taking Chow groups on both sides of Eq. \eqref{ratinj}, we obtain a split injection 

\begin{equation}\label{chowinj}
\CH^j_{\rm hom}(X)\  \hookrightarrow\  \CH^{j-2}(S)_{\rm hom},\ \ \ \forall j.
\end{equation}

As the right-hand side obviously vanishes for $j\ge 5$, this proves the required vanishings, for general GM sixfolds. 

\smallskip

In the second step of the proof, it remains to extend the theorem to {\em all\/} GM sixfolds. To this end, we remark that the vanishing

$$\CH^{\rm hom}_0=\CH^{\rm hom}_1=0$$
is equivalent \cite[Theorem 1.7]{Lat98} to having a decomposition 

\begin{equation}\label{diagdeco}
\Delta_X = \gamma + \gamma'\ \ \ \mathrm{in}\ \ \ \CH^6(X\times X),
\end{equation}
where $\gamma$ is a cycle supported on $(W\times X)\cup (X\times W)$ for some codimension 2 closed subvariety $W\subset X$, and $\gamma'$ is decomposable, \it i.e. \rm $\gamma'\in \CH^*(X)\otimes \CH^*(X)$.

Let $\sX\to \sB$ be the universal family of GM sixfolds, as in Definition \ref{univ}. By the above, we have thus obtained a decomposition (\ref{diagdeco}) for $X=X_b$, where $b\in\sB$ is general. The Hilbert schemes argument as detailed by Voisin \cite[Proposition 3.7]{V0} shows that there exists a decomposition (\ref{diagdeco}) where $\gamma,\gamma^\prime$ are generically defined.
The spread lemma \cite[Lemma 3.2]{Vo} then allows to extend this to {\em all\/} GM sixfolds, i.e. the decomposition
(\ref{diagdeco}) actually holds for $X=X_b$ for all $b\in\sB$. In view of the afore-mentioned equivalence, this proves the theorem.
\end{proof}

\begin{rk}
We observe that the equality $\CH_0^{hom}(X)=0$ descends also (more) easily from the fact that $X$ is Fano, and hence rationally connected. This does not change our proof for $\CH_1^{hom}(X)=0$, though.
\end{rk}

\begin{rk}
%We are in the situation of Voisin, up to $\epsilon$. :D
Voisin proves in \cite[Section 2]{V1} the triviality of certain Chow groups of the Debarre--Voisin 20-fold (i.e., a Pl\"ucker hyperplane section of the Grassmannian $\Gr(3,10)$), using her method of ``spread'' and the Abel--Jacobi isomorphism with the associated Debarre--Voisin hyper-K\"ahler fourfold. The proof of Theorem \ref{chhom} given above is very close in spirit to Voisin's argument; the Debarre--Voisin hyper-K\"ahler fourfold is replaced by the double EPW sextic, and Voisin's ``spread'' argument is replaced by the Franchetta property Proposition \ref{frankholds}.

We also remark that the analogous result $\CH_1^{hom}(Y)=0$ for GM {\em fivefolds\/} $Y$ was proven in \cite{46}.
\end{rk}

\subsection{Some consequences}

\begin{cor}\label{onlyone} Let $X$ be a GM sixfold. Then
  \[ \CH^i_{hom}(X)=0\ \ \forall\ i\not=4\ .\]
  Moreover, $\CH^4(X)$ is generated by $\sigma$-planes.
  \end{cor}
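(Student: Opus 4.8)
The plan is to leverage Theorem \ref{chhom} together with the Hodge diamond in Proposition \ref{dk} and the weak Lefschetz relationship between $X$ and the Grassmannian $\operatorname{Gr}(2,V_5)$. First I would recall that, by Theorem \ref{chhom}, we already have $\CH^5_{hom}(X)=\CH_1^{hom}(X)=0$ and $\CH^6_{hom}(X)=\CH_0^{hom}(X)=0$. By the projective bundle / weak Lefschetz statement invoked in the proof of Proposition \ref{frank1} (namely \cite[Proposition 3.4(b)]{DK1}), the restriction $H^i(\operatorname{Gr}(2,V_5),\Q)\to H^i(X,\Q)$ is an isomorphism for $i<6$; combining this with the fact that the Grassmannian has trivial Chow groups and the double-cover structure $X\xrightarrow{2:1}\operatorname{Gr}(2,V_5)$ of Proposition \ref{dk}(2), I would argue that $\CH^i(X)$ is spanned by restrictions of algebraic cycles from the Grassmannian (hence is finite-dimensional and maps injectively to cohomology) for $i\le 3$. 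Concretely, for $i=0,1,2,3$ the cycle class map $\CH^i(X)\to H^{2i}(X,\Q)$ is injective because every class comes from $\operatorname{Gr}(2,V_5)$ via pullback along the degree-two cover; this gives $\CH^i_{hom}(X)=0$ for $i\le 3$.

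Next, for the codimensions $i\ge 5$ I would simply cite Theorem \ref{chhom}: $\CH^5_{hom}(X)=\CH^6_{hom}(X)=0$. This leaves only $i=4$ genuinely undetermined, which accounts for the ``$\forall\, i\ne 4$'' in the statement — consistent with the fact that $H^8(X,\Q)$ (by Poincaré duality the dual of $H^4$) together with $H^6(X,\Q)$ carries the interesting K3-type transcendental part, and in middle codimension one cannot expect such a vanishing. So the vanishing statement reduces to assembling these three ranges: $i\le 3$ (weak Lefschetz plus trivial Chow groups of the Grassmannian, pulled up along the double cover), $i=5,6$ (Theorem \ref{chhom}), and $i=4$ excluded.

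For the second assertion — that $\CH^4(X)$ is generated by $\sigma$-planes — I would use the Abel--Jacobi picture of Subsection \ref{ss:aj}. The correspondence $\Gamma_0\in\CH^4(X\times\tilde Y_A)$ built from the universal $\sigma$-plane $\LL_2^\sigma(X)$ realizes $H^6(X,\Q)_0\cong H^2(\tilde Y_A,\Q)_0$ (Proposition \ref{dk1}); dually, $\CH^4(X)\cong\CH_2(X)$ receives the $\sigma$-plane classes, and one wants to show these already fill up all of $\CH^4(X)$. The strategy is: decompose $\CH^4(X)=\CH^4_{alg}(X)\oplus(\text{algebraic part})$, note that $\CH^4_{hom}(X)=\CH^4_{AJ}(X)$ here since $H^7(X,\Q)=H^9(X,\Q)=0$ so the Abel--Jacobi map is the relevant invariant, and then use that $\Gamma_0$ (via $^t\Gamma_0$, and using that $\tilde Y_A$ is a double EPW sextic with $\CH_2$-structure controlled by $\CH^2$ of a K3-type surface section as in the proof of Theorem \ref{chhom}) surjects onto the AJ-trivial part of $\CH^4(X)$ by classes of $\sigma$-planes; the non-AJ-trivial part of $\CH^4(X)$ is spanned by complete intersection classes, i.e. restrictions from $\operatorname{Gr}(2,V_5)$, which are themselves expressible in terms of $\sigma$-planes (or can be absorbed). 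Here one should be careful to first establish the statement for general $X$ and then spread out via the Hilbert-scheme argument of Voisin \cite{V0}, exactly as in the last step of the proof of Theorem \ref{chhom}.

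The main obstacle I anticipate is the second assertion rather than the vanishing: controlling precisely the image of the $\sigma$-plane classes inside $\CH^4(X)$ and checking it is everything. The vanishing part is essentially bookkeeping with weak Lefschetz and Theorem \ref{chhom}. But showing surjectivity onto $\CH^4(X)$ requires knowing that the correspondence $\Gamma_0$ (or the induced map from $\CH^2$ of the K3-type surface section $S\subset\tilde Y_A$) hits all of $\CH^4_{hom}(X)=\CH^4_{AJ}(X)$ — this uses the split injection of motives $h(X)\hookrightarrow h^2(S)(-2)\oplus\bigoplus\mathbb{L}(*)$ established in \eqref{ratinj}, whose $\CH_2$-component identifies $\CH^4_{hom}(X)$ with a subquotient of $\CH^0_{hom}(S)=0$... and indeed \eqref{chowinj} with $j=4$ gives $\CH^4_{hom}(X)\hookrightarrow\CH^2(S)_{hom}$, which does \emph{not} vanish, so one genuinely needs the geometric input that these AJ-trivial classes are realized by $\sigma$-planes, not merely an abstract motive computation. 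Tracking that the relatively-ample-line-bundle twist in $\Psi_{\tilde\sigma}$ does not destroy surjectivity, and handling the non-general fibers by spreading, are the delicate points.
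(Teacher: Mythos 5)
The vanishing part of your argument has a genuine gap in codimensions $2$ and $3$. Weak Lefschetz (\cite[Proposition 3.4(b)]{DK1}) is a statement about \emph{cohomology}: it tells you $H^i(\Gr(2,V_5),\Q)\to H^i(X,\Q)$ is bijective for $i<6$. It does \emph{not} tell you that $\CH^2(X)$ and $\CH^3(X)$ are spanned by restrictions of cycles from the Grassmannian — that is a Chow-theoretic statement with no known "Lefschetz theorem" behind it, and in this paper the identification $\CH^2(X)=\Q h^2\oplus\Q c$ is deduced \emph{from} Corollary \ref{onlyone}, not used to prove it (see the proof of Theorem \ref{tautsubring}). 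So your step "every class comes from $\Gr(2,V_5)$ via pullback along the double cover, hence $\CH^i_{hom}(X)=0$ for $i\le 3$" is unjustified (and essentially circular). The paper's route is different and is the reason the statement is a corollary of Theorem \ref{chhom}: the vanishings $\CH_0^{hom}(X)=\CH_1^{hom}(X)=0$ give, by the Bloch--Srinivas decomposition of the diagonal (the decomposition \eqref{diagdeco}, $\Delta_X=\gamma+\gamma'$ with $\gamma$ supported on $W\times W$ for $W$ of codimension $2$ and $\gamma'$ decomposable, cf. \cite[Remark 1.8.1]{Lat98}), that $\Delta_X$ acting on $\CH^i_{hom}(X)$ factors through Chow groups of a fourfold in a way that forces $\CH^i_{hom}(X)=0$ for all $i\ne 4$. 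If you want to keep an elementary flavour, $i=0,1$ are easy ($\CH^1_{hom}=\Pic^0(X)_\Q=0$ since $h^{1,0}=0$) and $i=2$ can be handled via $\CH^2_{hom}\hookrightarrow J^3(X)=0$ once $\CH_0$ is supported on a point, but $i=3$ really does require the finer diagonal decomposition; there is no way around it by pulling cycles back from the Grassmannian.

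Your treatment of the second assertion, despite some detours (the Abel--Jacobi triviality discussion is a red herring), does land on the paper's mechanism: the split injection \eqref{chowinj} for $j=4$ gives $\CH^4_{hom}(X)\hookrightarrow\CH^2_{hom}(S)$ with left-inverse induced by ${}^t\Gamma_0$ (up to the section $\Gamma_g$), hence $({}^t\Gamma_0)_\ast\colon\CH^2_{hom}(S)\to\CH^4_{hom}(X)$ is surjective; unravelling $\Gamma_0=\Gamma_\iota\circ\Psi_{\tilde\sigma}\circ\Gamma_p\circ{}^t\Gamma_q$ shows its transpose factors through $({}^t\LL_2^\sigma(X))_\ast$, so $\CH^4_{hom}(X)$ is generated by $\sigma$-planes, and the homologically nontrivial part is taken care of by $H^8(X,\Q)\cong\Q^2\cong H^8(\Gr(2,5),\Q)$ being generated by $\sigma$-planes as well. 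That is exactly the paper's argument; you just need to state it affirmatively rather than as an anticipated obstacle.
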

  
  \begin{proof} The first statement follows from the vanishing $ \CH^6_{hom}(X)= \CH^5_{hom}(X)=0$ (Theorem \ref{chhom}) by the Bloch--Srinivas ``decomposition of the diagonal'' argument (cf. \cite[Remark 1.8.1]{Lat98} for the precise result applied here).
  
  For the second statement, we recall (cf. \eqref{chowinj} above) that there is an injection
    \[ (\Gamma_0)_\ast\colon\ \ \CH^4_{hom}(X)\ \hookrightarrow\ \CH^2_{hom}(S)\ ,\]
    with left-inverse induced by the transpose ${}^t \Gamma_0$, i.e. 
    \[ ({}^t \Gamma_0)_\ast\colon\ \  \CH^2_{hom}(S)\ \to\ \CH^4_{hom}(X) \]
    is surjective. Unravelling the definition of the correspondence $\Gamma_0$ (cf. Subsection \ref{ss:aj}), this implies in particular that the universal $\sigma$-plane $ \LL_2^\sigma(X)\in \CH^4(X\times F)$ induces a surjection
      \[ ({}^t   \LL_2^\sigma(X))_\ast\colon\ \ \CH^4_{hom}(F)\ \to\ \CH^4_{hom}(X)\ ,\]
      i.e. $  \CH^4_{hom}(X)$ is generated by $\sigma$-planes. As $H^8(X,\Q)\cong\Q^2\cong H^8(\Gr(2,5),\Q)$ is also generated by $\sigma$-planes, this proves the second statement. 
   \end{proof}

\section{MCK for $X$}

The goal of this section is to prove that all GM sixfolds have a  multiplicative Chow-K\"unneth decomposition {\em modulo algebraic equivalence\/}.
As before, we write $\sX\to\sB$ for the universal family of GM sixfolds. We denote $B^\ast(X)$ the group of algebraic cycles (with $\Q$-coefficients) modulo algebraic equivalence.
The following is the obvious adaptation of Definition \ref{def:gd} to algebraic equivalence:

 \begin{defn}\label{def:gdb} Given a family $\sY\to \sB$ as in Definition \ref{def:gd}, with $Y:=Y_b$ a fiber, we write
   \[ \GDB^j_\sB(Y):=\Im\Bigl( 
  B^j(\sY)\to B^j(Y)\Bigr) \]
   for the subgroup of {\em generically defined cycles}. 
  In a context where it is clear to which family we are referring, the index $\sB$ will often be suppressed from the notation.
  \end{defn}

As a preparatory result, we first establish a ``Franchettina property'' for the square of a GM sixfold:

\begin{prop}\label{Franchettina}(the Franchettina property) Let $X$ be a GM sixfold.
The cycle class map induces a split injection 

$$\GDB_\sB^*(X\times X) \hookrightarrow H^*(X\times X,\Q).$$
\end{prop}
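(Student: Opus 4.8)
The idea is to run the same stratified projective bundle argument that gave \eqref{genericeq} in Proposition~\ref{frankholds}, but now working modulo algebraic equivalence, and then to check that the resulting generators inject into cohomology. More precisely, the stratified projective bundle argument of \cite[Proposition 2.6]{FLV3} is insensitive to the choice of adequate equivalence relation (it only uses the projective bundle formula, which holds for $B^\ast$ just as for $\CH^\ast$), so the same computation yields
\[
\GDB^\ast_\sB(X\times X)=\bigl\langle B^\ast(\sC\times\sC),\ \Delta_X\bigr\rangle\ .
\]
Since $X$ avoids the summit of the cone $\sC$, and $\sC^0$ is an $\A^1$-fibration over $\Gr(2,5)$ whose Chow groups (hence also $B^\ast$) are generated by those of $\Gr(2,5)$, this reduces to
\[
\GDB^\ast_\sB(X\times X)=\bigl\langle (\Im r_X)\otimes(\Im r_X),\ \Delta_X\bigr\rangle\ ,
\]
exactly as in \eqref{thiseq}; note that for the Grassmannian the maps $\CH^\ast\to B^\ast$ and $B^\ast\to H^\ast$ are all isomorphisms, so $\Im r_X$ may be viewed inside $B^\ast(X)$ or $\CH^\ast(X)$ interchangeably.

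\textbf{Key steps, in order.} First, spell out the stratified projective bundle setup for $\bar{\sX}\times_{\bar{\sB}}\bar{\sX}\xrightarrow{\sigma}\sC\times\sC$ exactly as in the proof of Proposition~\ref{frankholds}, and invoke \cite[Proposition 2.6]{FLV3} with $B^\ast$ in place of $\CH^\ast$ to get the description of $\GDB^\ast_\sB(X\times X)$ above. Second, by Lemma~\ref{coneinj} combined with the Künneth formula, the subring $(\Im r_X)\otimes(\Im r_X)$ injects into $H^\ast(X\times X,\Q)$; since $X$ has trivial Chow groups in all degrees except the middle one (Corollary~\ref{onlyone}), the map $B^\ast(X)\to H^\ast(X,\Q)$ is actually an isomorphism outside the middle, so in fact $(\Im r_X)\otimes(\Im r_X)$ is the full ``decomposable'' part and it injects. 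Third, handle the diagonal: one shows $\Delta_X$ is linearly independent of $(\Im r_X)\otimes(\Im r_X)$ modulo algebraic equivalence. For this, note that on the middle cohomology $H^{4,2}(X,\Q)$ there are transcendental classes on which $\Delta_X$ acts as the identity while any decomposable correspondence in $B^\ast(X)\otimes B^\ast(X)$ acts as zero — this is the same argument as at the end of the proof of Proposition~\ref{frankholds}, and it is equally valid modulo algebraic equivalence since algebraically trivial correspondences act trivially on transcendental cohomology. Hence the cycle class map is injective on all of $\GDB^\ast_\sB(X\times X)$. Finally, the injection is split: a left inverse is furnished on the decomposable part by the obvious projector coming from the Künneth decomposition (built from algebraic classes on $\Gr(2,5)$), and on the $\Q[\Delta_X]$-summand by, say, intersecting with a transcendental class as above; alternatively one invokes that $h(X)$ is finite-dimensional enough in the needed degrees, or simply notes that a map of finite-dimensional $\Q$-vector spaces which is injective splits.

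\textbf{Main obstacle.} The genuinely delicate point is making sure the stratified projective bundle machinery of \cite[Proposition 2.6]{FLV3}, which is stated for rational equivalence, transfers verbatim to algebraic equivalence — i.e.\ that the only input is the projective bundle formula and excision, both of which hold for $B^\ast$. Once that is granted, everything else is a transcription of the proof of Proposition~\ref{frankholds}. A secondary (but routine) subtlety is the ``splitting'' assertion: over $\Q$ and in the finite-dimensional degrees involved it is automatic, but if one wants a canonical correspondence-induced splitting one must assemble it from the Grassmannian's algebraic Künneth projectors together with the transcendental pairing on the middle piece, which requires only bookkeeping, not new ideas.
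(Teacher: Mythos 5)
There is a genuine gap. You treat $\GDB^\ast_\sB(X\times X)=\langle \Im r_X\otimes\Im r_X,\ \Delta_X\rangle$ as if it were the \emph{linear span} of the decomposable classes and the diagonal, but it is the $\Q$-\emph{subalgebra generated by} them, and therefore also contains the intersection products $\Delta_X\cdot c_1$ (codimension $7$) and $\Delta_X\cdot c_2$ (codimension $8$), where $c_1,c_2$ are the images under $r_X$ of the Chern classes of the tautological bundle on $\Gr(2,5)$. Proposition \ref{frankholds} concerned only codimension $6$, where no such products occur --- that is exactly why \eqref{thiseq} could be read off from \eqref{genericeq} --- so the present proposition is \emph{not} a transcription of that proof. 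Your injectivity argument (Künneth injectivity of $\Im r_X\otimes\Im r_X$ plus linear independence of $\Delta_X$ via its action on transcendental classes) says nothing about whether $\Delta_X\cdot c_1$ and $\Delta_X\cdot c_2$, which are homologically equivalent to decomposable cycles for degree reasons, are actually \emph{equal} to decomposable cycles in $B^\ast(X\times X)$. If they were not, a homologically trivial class of the form $\Delta_X\cdot c_2-p$ with $p$ decomposable could survive in $\GDB^8_\sB(X\times X)$ and injectivity would fail.

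Closing this gap is the entire content of the paper's proof: $\Delta_X\cdot c_1\in\Im r_X\otimes\Im r_X$ follows from the excess intersection formula applied to $X\hookrightarrow\sC^0$; for $\Delta_X\cdot c_2$ one first produces a decomposable $p$ with $\Delta_X\cdot c_2=p$ in $H^{16}(X\times X,\Q)$ (the primitive cohomology sits in degree $6$, so cupping with $c_2$ kills it, and Manin's identity principle applies), and then upgrades this homological identity to algebraic equivalence using the injection $B^8_{hom}(X\times X)\hookrightarrow B^4_{hom}(S\times S)=0$ furnished by the motivic splitting from the proof of Theorem \ref{chhom}, the vanishing on the right holding because homological and algebraic equivalence coincide for zero-cycles. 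This last step is precisely where one is forced down to algebraic equivalence: your claim that everything ``transfers verbatim'' would, if correct, also prove the rational-equivalence statement, which the authors explicitly identify as the open obstacle (Remark \ref{problem}). Your remaining steps (the stratified projective bundle description carrying over to $B^\ast$, the independence of $\Delta_X$ in cohomology, and the splitting over $\Q$) are fine but are the easy part.
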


\begin{proof} 
Recall that we have already proven (Equality \ref{genericeq}) that
 \[  \begin{split} \GDCH^*_\sB(X\times X)&=\langle    \Im r_X\otimes \Im r_X,\Delta_X \rangle \\
 \end{split}\]                                                                 
                                                                  %&= \langle   c_1, c_2, \Delta_X \rangle\ .\\
                                                                   
                                                                    Let us denote by $c_1\in \CH^1(X), c_2\in\CH^2(X)$  the image under $r_X$ of the first (resp, second) Chern class of the tautological bundle $\mathcal{U}_X$ on the Grassmannian $\Gr(2,5)$. Recall that $\CH^\ast(\Gr(2,5))$ is generated as a $\Q$-algebra by the first and second Chern classes of $Q$ \cite{3264}. Then the $\Q$-algebra $\langle    \Im r_X\otimes \Im r_X,\Delta_X \rangle$ is generated by $\Delta_X$ and the pull-backs of $c_1$ and $c_2$ with respect to the two projections $\pi_i:X\times X\to X$, $i=1,2$. By abuse of notation, in the rest of the proof we will denote by $c_i$ also the pull-back of $c_i$ via anyone of the two projections. This will be harmless since
                                                                    
                                                                    $$\Delta_X\cdot \pi_1^\ast (c_i) =\Delta_X\cdot \pi_2^\ast (c_i) .$$
                                                                    
We now want to show that we have an equality of $\Q$-algebras

\begin{equation}\label{deca}
\GDB^\ast_\sB(X\times X)=  \Im r_X\otimes \Im r_X  \oplus \Q[\Delta_X]\ .                                                                 
\end{equation}
                                                                    
                                                                    Im $r_x$ is generated by $c_1$ and $c_2$, and Im $r_x \otimes \mathrm{Im}\ r_x$ is generated by pull-backs of $c_1$ and $c_2$ to the product. An argument analogous to the one developed at the end of Proposition \ref{frankholds} shows that $\Q[\Delta_X]$ is independent of $\Im r_X\otimes \Im r_X$, up to algebraic equivalence.

In order to get equality (\ref{deca}), one just has to check where do the $\Delta_X\cdot c_i$ land. If all the classes $\Delta_X\cdot c_i$ live in $\mathrm{Im}\ r_x \otimes \mathrm{Im}\ r_x$ then the formula holds true.

It is not hard to see that the intersection product $\Delta_X\cdot c_1$ is contained in $ \Im r_X\otimes \Im r_X$. Indeed, the excess intersection formula \cite[Theorem 6.3]{F}, applied to the inclusion morphism of $X$ into the punctured cone $\sC^0$, gives that

  \[ \Delta_X\cdot c_1 \ \ \in\ \Im\Bigl(\CH^\ast( \sC^0\times \sC^0)\to \CH^\ast(X\times X)\Bigr)=\Im r_X\otimes\Im r_X\ ,\]   
  cf. \cite[Equation (13)]{FLV3}.                                                             

It remains to show that $\Delta_X\cdot c_2$ is also contained in  $ \Im r_X\otimes \Im r_X$, modulo algebraic equivalence. Once this will be proven, then equality (\ref{deca}) will hold, and so the combination of Lemma \ref{coneinj} and Proposition \ref{frankholds} will imply that $  \GDB^\ast_\sB(X\times X)  $ injects into cohomology. Notably, since $\mathrm{Im}\ r_x$ is injected in cohomology, the injection of $\mathrm{Im}\ r_x \otimes \mathrm{Im}\ r_x$ in cohomology is a consequence of the K\"unneth formula.

\smallskip

Let us then complete the proof by showing that $\Delta_X\cdot c_2\in \Im r_X\otimes \Im r_X$. We first observe that $\Delta_X\cdot c_2 $ is contained in  $ \Im r_X\otimes \Im r_X$, modulo {\em homological\/} equivalence. Indeed, the correspondence $\Delta_X\cdot c_2 $
 acts on $H^\ast(X,\Q)$ as cupping with $c_2$. As the primitive cohomology of $X$ is concentrated in degree $6$, the correspondence $\Delta_X\cdot c_2 $ acts as 0 on the primitive cohomology. The algebraic part of $H^\ast(X,\Q)$ can be expressed in terms of $c_1$ and $c_2$, and so there exists some $p\in \Im r_X\otimes \Im r_X$ such that $p$ and  $\Delta_X\cdot c_2 $ act in the same way on $H^\ast(X,\Q)$. Manin's identity principle (plus the K\"unneth formula in cohomology) then implies that
   \[    \Delta_X\cdot c_2 = p\ \ \hbox{in}\ H^{16}(X\times X,\Q)\ .\] 
 Now we want to upgrade to algebraic equivalence. Let $\tilde{Y}_A$ be the associated double EPW sextic, and let $S\subset Y$ be a general codimension 2 section.
 As we have seen in the proof of Theorem \ref{chhom}, there is an injection 
   \[ B^8(X \times X) \ \hookrightarrow\  B^4(S\times S) \oplus \Q^r\ ,\] 
 Passing to $B^8_{hom}(X\times X)$ a very similar inclusion holds:

\[ B^8_{hom}(X \times X) \ \hookrightarrow \  B^4_{hom}(S\times S)\ .\]

But $B^4_{hom}(S\times S)=0$ (homological and algebraic equivalence coincide for zero-cycles), and so $B^8_{hom}(X\times X)=0$. This proves that $\Delta_X\cdot c_2$
lies in $\Im r_X\otimes \Im r_X$ modulo algebraic equivalence, as claimed. This ends the proof.
\end{proof}

We are now in position to prove the main result of this section.

\begin{thm}\label{thm:mck}
Let $X$ be a smooth GM sixfold, then $X$ admits an MCK decomposition modulo algebraic equivalence.
\end{thm}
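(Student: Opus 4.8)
The plan is to write down an explicit set of mutually orthogonal idempotents $\pi^i\in B^6(X\times X)$ ($i=0,\dots,12$) that are generically defined over $\sB$, and then to derive every required identity --- orthogonality, idempotency, completeness and multiplicativity --- from Franchetta-type injectivity statements. Proposition \ref{Franchettina} already provides the injectivity $\GDB^*_\sB(X\times X)\hookrightarrow H^*(X\times X,\Q)$; for multiplicativity I will in addition need the analogous statement for the triple self-product.

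\emph{Construction of the projectors.} Put $\pi^i=0$ for $i$ odd, since $H^{\mathrm{odd}}(X,\Q)=0$. For $i$ even and $i\neq 6$ the cohomological Künneth component lies in $H^i(X)\otimes H^{12-i}(X)$, and by (the proof of) Proposition \ref{frank1} --- i.e. \cite[Prop. 3.4(b)]{DK1} together with Poincaré duality --- both factors are spanned by $\Im r_X$, so the component is represented by a cycle $\pi^i\in\Im r_X\otimes\Im r_X$. Writing $H^6(X,\Q)=H^6_{\mathrm{alg}}(X)\oplus H^6_{\mathrm{tr}}(X)$ with $H^6_{\mathrm{alg}}(X)$ the ($\sigma$-generated) algebraic part, choose $\pi^6_{\mathrm{alg}}\in\Im r_X\otimes\Im r_X$ representing the projector onto $H^6_{\mathrm{alg}}(X)$, and set
\[
 \pi^6_{\mathrm{tr}}:=\Delta_X-\sum_{i\neq 6}\pi^i-\pi^6_{\mathrm{alg}},\qquad \pi^6:=\pi^6_{\mathrm{alg}}+\pi^6_{\mathrm{tr}}.
\]
By Equality \eqref{genericeq} all these cycles belong to $\langle\Im r_X\otimes\Im r_X,\Delta_X\rangle=\GDB^*_\sB(X\times X)$, hence are generically defined.

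\emph{It is a Chow--Künneth decomposition modulo algebraic equivalence, and reduction of multiplicativity.} The cycles $\pi^i\circ\pi^j-\delta_{ij}\pi^i$ and $\sum_i\pi^i-\Delta_X$ are again generically defined over $\sB$, and they vanish modulo homological equivalence because the $\pi^i$ realise the honest cohomological Künneth projectors; by Proposition \ref{Franchettina} they therefore vanish in $B^6(X\times X)$. Thus $\{\pi^i\}$ is a Chow--Künneth decomposition of $X$ modulo algebraic equivalence. For multiplicativity one must show $\pi^k\circ\Delta_X^{sm}\circ(\pi^i\times\pi^j)=0$ in $B^{12}(X\times X\times X)$ for $i+j\neq k$; each such correspondence is generically defined over $\sB$ (the relative small diagonal is, and so are the $\pi$'s), and vanishes modulo homological equivalence (Remark \ref{rem:mck}). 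So it suffices to establish a Franchetta property modulo algebraic equivalence for $\sX\times_\sB\sX\times_\sB\sX\to\sB$, namely $\GDB^*_\sB(X\times X\times X)\hookrightarrow H^*(X\times X\times X,\Q)$. I would obtain this by running the stratified projective bundle argument of Propositions \ref{frankholds} and \ref{Franchettina} over $\sC\times\sC\times\sC$: this identifies $\GDB^*_\sB(X^3)$ with the subalgebra generated by $\Im r_X^{\otimes 3}$ and the partial and small diagonals $\Delta_{12},\Delta_{13},\Delta_{23},\Delta_{123}$, after which one argues --- as at the ends of the proofs of Propositions \ref{frankholds} and \ref{Franchettina} --- that the extra diagonal classes, and their products with classes coming from $\Im r_X^{\otimes 3}$, inject into cohomology together with the decomposable ones, using that the transcendental cohomology $H^6_{\mathrm{tr}}(X)$ is detected by the diagonal but annihilated by decomposable correspondences.

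\emph{The main obstacle.} The delicate part is precisely this injectivity for the triple product. Its content can be isolated via the split motivic injection $h(X)\hookrightarrow t(S)(-2)\oplus\bigoplus_a\mathbb{L}(a)$ of \eqref{ratinj} (valid a fortiori in $\sM_{\mathrm{alg}}$, where $S$ is the surface of Theorem \ref{chhom} --- an ample linear section of the hyperkähler fourfold $\tilde Y_A$, hence regular --- and $t(S)$ its transcendental motive): the only multiplicativity components not already killed by the Franchetta property for $X\times X$, or by the vanishing modulo algebraic equivalence of all morphisms between $t(S)$ and twists of $\mathbb{L}$ (which follows from $B_0(S)_{\mathrm{hom}}=0$ and the regularity of $S$, as every homologically trivial divisor, and every degree-zero zero-cycle, on a connected variety is algebraically trivial), are those landing in $\Hom_{\sM_{\mathrm{alg}}}(t^6(X)\otimes t^6(X),h^6(X))$, and these reduce to the vanishings $\Hom_{\sM_{\mathrm{alg}}}(t(S)\otimes t(S),t(S))=0$ and $\Hom_{\sM_{\mathrm{alg}}}(t(S)\otimes t(S),\mathbb{L}^m)=0$ for $m\neq 2$ --- equivalently, to controlling the ``multi-transcendental'' parts of $B^*(S\times S)$ and $B^*(S\times S\times S)$. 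This is also the reason the argument produces a decomposition only modulo algebraic equivalence: exactly as in the proof of Proposition \ref{Franchettina}, the cycle $\Delta_X\cdot c_2$ becomes decomposable only after passing to algebraic equivalence, and modulo rational equivalence the corresponding groups (such as $\CH_0(S\times S)_{\mathrm{hom}}$) are genuinely large (cf. Remark \ref{problem}). Granting the triple-product injectivity, $\{\pi^i\}$ is an MCK decomposition modulo algebraic equivalence, which proves Theorem \ref{thm:mck}.
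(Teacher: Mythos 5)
There is a genuine gap, located exactly where you flag it: the component
\[
\Gamma_{666}:=\pi^6_{prim}\circ\Delta_X^{sm}\circ(\pi^6_{prim}\times\pi^6_{prim})\ \in\ B^{12}(X\times X\times X)\ .
\]
Your construction of the projectors and your treatment of the other components (generically defined, homologically trivial, hence killed by a Franchetta-type injectivity) run parallel to the paper, which disposes of every triple other than $(6_{prim},6_{prim},6_{prim})$ by noting that the corresponding cycle lies in $\bigoplus B^\ast(X\times X)$ and invoking Proposition \ref{Franchettina}. But neither of your two routes to $\Gamma_{666}=0$ goes through. First, the Franchetta property for $X^3$ modulo algebraic equivalence cannot serve as an input: identifying $\GDB^\ast_\sB(X^3)$ with the algebra generated by $\Im r_X^{\otimes 3}$ and the diagonals is indeed a stratified projective bundle argument (Proposition \ref{frankX3}), but proving that this algebra injects into cohomology requires controlling products of partial diagonals such as $\tau_{12}\cdot\tau_{13}$ --- i.e.\ precisely relation \eqref{E:X3'} --- which in the paper is \emph{deduced from} the MCK decomposition via Theorem \ref{tautsubring}. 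Your argument is therefore circular at the decisive point. Second, the reduction via the surface $S$ leaves you with the vanishing of the homologically trivial parts of $\Hom_{\sM}(t(S)\otimes t(S),t(S))$ and $\Hom_{\sM}(t(S)\otimes t(S),\mathbb{L}^m)$ modulo algebraic equivalence; these do not follow from the coincidence of homological and algebraic equivalence for divisors and zero-cycles, since they live in intermediate codimension on $S\times S$ and $S^3$ where Griffiths groups may be nonzero. They are not known and are not proved in the paper.

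The idea you are missing is the involution $\sigma$ of the double cover $X\to\Gr(2,5)$. It splits $h(X)=h(X)^+\oplus h(X)^-$ with $h(X)^+\cong h(\Gr(2,5))$ accounting for all of the algebraic cohomology, so that $h(X)^-=h^6_{prim}(X)$. The product of two anti-invariant classes is invariant, so the multiplication map $h(X)^-\otimes h(X)^-\to h(X)$ factors through $h(X)^+$, and composing with the projector onto $h(X)^-$ is zero. This gives $\Gamma_{666}=0$ already in $\CH^{12}(X^3)$, with no Franchetta input whatsoever. With this replacement for the beast component, the rest of your argument (run with Proposition \ref{Franchettina} on $X\times X$ rather than a Franchetta property for $X^3$) closes the proof.
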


\begin{proof}
For any $i\not=6$, let $\pi^i_X\in \CH^*(X)\otimes \CH^*(X)$ be the (unique) projector on $H^i(X,\Q)$ that is {\em decomposable\/} (i.e. $\pi^i\in \CH^\ast(X)\otimes \CH^\ast(X)$). We define $\pi_X^6:= \Delta_X -\sum_{i\neq 6} \pi^i_X$. Then we have a CK decomposition

$$h(X)=h^0(X)\oplus \dots \oplus h^{12},$$

with the property that

\begin{equation}\label{triv} h^i(X)=(X,\pi^i_X,0)= \bigoplus \L(\ast),\ \ \ \forall i \neq 6.\end{equation}

Now we define a submotive of $h^6(X)$ as follows. We will call \it tautological cohomology \rm the subgroup $H^6_{taut}(X,\Q)$ of $H^6(X,\Q)$ obtained as the image of the natural map $H^6(\sC^0,\Q) \to H^6(X,\Q)$. We will also denote \it primitive cohomology \rm $H^6_{prim}(X,\Q)$ the orthogonal complement (with respect to cup product) of $H^6_{taut}(X,\Q)$ inside $H^6(X,\Q)$. As before, we can define a projector $\pi^6_{taut}$ on  $H^6_{taut}(X,\Q)$ that is decomposable, and so
  \begin{equation}\label{trivtoo} h^6_{taut}(X)=(X,\pi^6_{taut},0)= \bigoplus \L(\ast)\ .\end{equation}
  
Finally, we define a projector $\pi^6_{prim}$ on $H^6_{prim}(X,\Q)$ by setting $\pi^6_X= \pi^6_{taut} + \pi^6_{prim}$.

\smallskip

Let us now prove that the CK decomposition $\{ \pi^i_X\}$ is MCK, modulo algebraic equivalence.
%Let us first consider a triple $\{\pi^i_X,\pi^j_X, \pi^k_X\}\neq\{\pi^6_{prim},\pi^6_{prim},\pi^6_{prim}\}$\footnote{Remark the highly non-incidental presence of the number of the beast here}. 
We observe that there are equalities
\[\begin{split} \Gamma_{ijk}:=&\pi_X^k\circ \Delta_X^{sm} \circ (\pi^i_X \times \pi^j_X) \\
=& ({}^t\pi^i_X \times {}^t\pi^j_X \times \pi^k_X)_*\Delta_X^{sm}\\
=& (\pi^{12-i}_X \times \pi^{12-j}_X \times \pi^k_X)_*\Delta_X^{sm}\ \ \in B^*(h^i\otimes h^j\otimes h^k)\ ,\\
\end{split}   \]
where the second equality is thanks to Lieberman's lemma \cite[Lemma 2.1.3]{MNP}.
We remark moreover that this cycle $\Gamma_{ijk}$ is generically defined (with respect to $\sB$), i.e. it lies in $\GDB^\ast_\sB(X\times X\times X)$.

Let us first consider a triple $\{\pi^i_X,\pi^j_X, \pi^k_X\}\neq\{\pi^6_{prim},\pi^6_{prim},\pi^6_{prim}\}$\footnote{Remark the highly non-incidental presence of the number of the beast here}.
In view of \eqref{triv} and \eqref{trivtoo}, in this case we can write
  \[ %\pi^k\circ \Delta_X^{sm} \circ (\pi^i \times \pi^j) 
    \Gamma_{ijk}\ \ \in B^*(h^i\otimes h^j\otimes h^k)\  \hookrightarrow\    \ \bigoplus B^\ast(X\times X)\ \]
    (where the sum on the right-hand side comes from \eqref{triv} or \eqref{trivtoo}, plus the fact that $B^\ast(X\times X\times \L)=B^\ast(X\times X)$). 
    Since the cycle $\Gamma_{ijk}$ and the equalities \eqref{triv} and \eqref{trivtoo} are generically defined, it follows that there is actually an injection
    \[   \Gamma_{ijk}\ \ \in B^*_\sB( h^i\otimes h^j\otimes h^k)\  \hookrightarrow\    \ \bigoplus B^\ast_\sB(X\times X)\ .\] 
     But the cycle $\Gamma_{ijk}$ is homologically trivial (Remark \ref{rem:mck}), and so an application of the Franchettina property (Proposition \ref{Franchettina}) gives the vanishing
    \[ \Gamma_{ijk}=0\ \ \hbox{in}\  B^{12}(X\times X\times X)\ .\]  
    
 It only remains to treat the ``correspondence of the beast''
   \[ \Gamma_{666}:=    \pi^6_{prim}\circ \Delta_X^{sm} \circ (\pi^6_{prim} \times \pi^6_{prim})\ \ \in\ B^{12}(X\times X\times X)\ .\]
To this end, we consider the involution $\sigma\in\aut(X)$ coming from the double cover $X\to\Gr(2,5)$. 
This involution gives rise to a splitting of the motive of $X$
  \[  h(X)= h(X)^+ \oplus h(X)^-\ \ \ \hbox{in}\ \sM_{\rm rat}\ ,\]
  where $h(X)^+$ (resp. $h(X)^-$) is defined by the projector $1/2( \Delta_X+\Gamma_\sigma)$ (resp. the projector $1/2( \Delta_X-\Gamma_\sigma)$. 
  %Let $h^6_{tr}(X)$ and $h^6_{alg}(X)$ denote respectively the transcendental and the algebraic part of $h^6(X)$, in the Chow-Künneth decomposition $h(X)=\bigoplus_{0\leq i \leq 12}h^i(X)$ (see \cite{KMP} or \cite{BP}. 
 It is readily seen that there is equality
   \[   h(X)^- = h^6_{prim}(X)\ \ \ \hbox{in}\ \sM_{\rm rat}\ \]      
  (indeed, the double cover  $X\to\Gr(2,5)$ induces isomorphisms $h(X)^+\cong h(\Gr(2,5))\cong \sum_{j\not=6} h^j(X)\oplus h^6_{taut}(X)$).
 On the other hand, the intersection product map
  \[ (\Delta_X^{sm})_\ast \colon\ \ h(X)^-\otimes h(X)^- \ \to\ h(X) \ \ \hbox{in}\ \sM \]
  factors over $h(X)^+$, and so the map
   \[  h(X)^-\otimes h(X)^- \ \xrightarrow{\Delta_X^{sm}}\ h(X) \ \to\ h(X)^-\ \ \hbox{in}\ \sM \]       
 is zero. This is equivalent to the vanishing 
   \[    \pi^6_{prim}\circ \Delta_X^{sm} \circ (\pi^6_{prim} \times \pi^6_{prim})=0\ \ \hbox{in}\ \CH^{12}(X\times X\times X)\ ,\]
  and so a fortiori the correspondence of the beast vanishes modulo algebraic equivalence:
    \[\Gamma_{666}=0\ \ \  \hbox{in}\ B^{12}(X\times X\times X)\ .\]    
    This closes the proof.  
    \end{proof}
    
\begin{rk}\label{problem} For GM {\em fivefolds\/} a stronger result is true: these varieties have an MCK decomposition {\em modulo rational equivalence\/} \cite{46}. It seems likely that GM varieties (of any dimension) admit an MCK decomposition (modulo rational equivalence); we have not been able to prove this.

Inspection of the above proof gives the following: to prove that GM sixfolds have an MCK decomposition {\em modulo rational equivalence\/},
it would suffice to prove that for a very general GM sixfold $X$ one has
  \[ \Delta_X\cdot c_2\ \ \in\ \Im r_X\otimes \Im r_X\ \ \subset\ \CH^\ast(X\times X) \ ,\] 
  where $c_2$ and $r_X$ are as above. We have proven this modulo algebraic equivalence, but we don't know how to upgrade to rational equivalence.
\end{rk}

\section{The tautological ring}

\begin{defn}\label{tautring} Let $X$ be a GM sixfold, and $m\in\N$. We define the {\em tautological ring\/} as the $\Q$-subalgebra
  \[  \begin{split} R^\ast(X^m):=& \Bigl\langle  (p_i)^\ast\Im\bigl(B^\ast(\Gr(2,5))\to B^\ast(X)\bigr),(p_{ij})^\ast\Delta_X\Bigr\rangle\\
  =& \Bigl\langle  (p_i)^\ast (c_1), (p_i)^\ast(c_2), (p_{ij})^\ast(\Delta_X)\Bigr\rangle\\
       =&   \Bigl\langle (p_i)^\ast B^1(X), (p_i)^\ast B^2(X),  (p_{ij})^\ast \Delta_X\Bigr\rangle   \ \ \ \subset\ B^\ast(X)\ .\\\
       \end{split}\]
(Here $p_i:X^m\to X$ and $p_{ij}:X^m\to X^2$ denote the various projection morphisms from $X^m$ to $X$ resp. to $X^2$.)

\end{defn}

This tautological ring is similar to tautological subrings of the Chow ring that have been studied for hyperelliptic curves in \cite{Ta2}, \cite{Ta}, for K3 surfaces in \cite{V17}, \cite{Yin}, and for cubic hypersurfaces in \cite{FLV3}.

\begin{thm}\label{tautsubring} Let $X$ be a GM sixfold. The cycle class map induces injections
  \[ R^\ast(X^m)\ \hookrightarrow\ H^\ast(X^m,\Q) \]
  for all $m\le 45$.
  
  Moreover, the cycle class map induces injections
    \[ R^\ast(X^m)\ \hookrightarrow\ H^\ast(X^m,\Q) \]
 for all $m$ if and only if $X$ is Kimura finite-dimensional \cite{Kim}.
 \end{thm}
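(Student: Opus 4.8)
The plan is to reduce the injectivity of the cycle class map on $R^\ast(X^m)$ to a computation with the motive of the associated surface $S$, using the split injection of Chow motives $h(X)\hookrightarrow h^2(S)(-2)\oplus\bigoplus\L(\ast)$ established in the proof of Theorem~\ref{chhom}. First I would observe that, modulo algebraic equivalence, the motive of $X$ splits as $h(X)\sim\bigoplus\L(\ast)\oplus t^2(S)(-2)$ in $\sM_{\rm rat/alg}$, where $t^2(S)$ is the transcendental part of the surface $S$ (the algebraic part of $h^2(S)$ contributes only more Lefschetz summands, and these merge with the tautological summands of $h(X)$). Consequently $h(X^m)$ is a direct sum of Tate twists and of summands of the form $t^2(S)^{\otimes k}(\ast)$ for $1\le k\le m$. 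The tautological classes $(p_i)^\ast c_1$, $(p_i)^\ast c_2$ land in the ``Lefschetz part'', while the diagonals $(p_{ij})^\ast\Delta_X$ carry the transcendental information. The key point is that $R^\ast(X^m)$ is therefore contained in a ring built from $B^\ast$ of products of at most $m$ copies of $S$ (with extra $\Q$-summands coming from the Lefschetz classes), and the injectivity into cohomology will follow from the corresponding injectivity statement for the tautological ring of powers of a K3-type surface, for which Yin's results (\cite{Yin}, via \cite{V17}) give a sharp numerical bound.

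The second step is to make the numerical bound precise. The transcendental cohomology $H^6_{prim}(X,\Q)\cong H^2_{tr}(S,\Q)\cong H^2_{tr}(\tilde Y_A,\Q)$ has dimension $22$ (from the Hodge diamond in Proposition~\ref{dk}, $h^{3,3}_{prim}=22$ after removing the algebraic class, so the primitive transcendental piece has rank $22$; one checks that the Beauville--Bogomolov lattice of a double EPW sextic has rank $23$ with a rank-$1$ algebraic part, consistent with this). The relevant result of Yin says that for a K3-type Hodge structure of ``transcendental rank'' $\rho$, the tautological relations among the symmetric/tensor powers of the associated motive that hold in cohomology already hold modulo the relevant equivalence precisely when $m$ is below a bound governed by $\rho$; with $\rho=22$ the combinatorics yields $m\le 45$ (this is the same mechanism by which one gets the bound $2b_{2,tr}+1$ or a closely related expression — here $2\cdot 22+1=45$). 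So I would: (i) identify $H^6_{tr}(X,\Q)$ with a $22$-dimensional K3-type Hodge structure; (ii) invoke the Franchetta-type/Yin computation to conclude that every polynomial relation in $H^\ast(X^m,\Q)$ among the generators of $R^\ast(X^m)$ lifts to $B^\ast(X^m)$ as long as $m\le 45$, i.e. the cycle class map on $R^\ast(X^m)$ is injective; (iii) translate back through the motivic splitting, checking that the extra Lefschetz summands do not interfere (they never do, since algebraic classes inject into cohomology).

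For the ``moreover'' statement: the equivalence with Kimura finite-dimensionality. The forward direction is immediate — if the cycle class map is injective on $R^\ast(X^m)$ for all $m$ then in particular the motive of $X$ cannot contain a non-trivial ``phantom'', and more to the point one runs the standard argument: finite-dimensionality of $h(X)$ is equivalent to finite-dimensionality of $t^2(S)$, and the failure of finite-dimensionality would, by Kimura's theory, force a nonzero element of $\bigwedge^N t^2(S)$ (for $N=\dim H^2_{tr}(S)+1$) that is homologically trivial; this element is tautological in a suitable $X^m$ (it is built from diagonals), giving a nonzero homologically trivial class in $R^\ast(X^m)$, contradicting injectivity. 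Conversely, if $X$ is Kimura finite-dimensional then so is every $X^m$, and one combines this with the nilpotence theorem: any tautological class that is homologically trivial is, by the cohomological tautological relations together with finite-dimensionality, smash-nilpotent hence (for a finite-dimensional motive) numerically trivial implies rationally trivial; since the ambient group $R^\ast(X^m)$ consists of classes controlled by $h^2(S)^{\otimes\le m}$, homological triviality plus finite-dimensionality forces rational (hence algebraic) triviality, giving the injection for all $m$. The main obstacle I anticipate is step~(ii): pinning down the exact constant $45$ requires a careful bookkeeping of which symmetric powers of the $22$-dimensional K3-type piece have their tautological relations appear in cohomology, and matching this against Yin's theorem — the motivic reduction itself and the Kimura equivalence are comparatively formal once the surface model is in place.
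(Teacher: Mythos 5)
Your overall strategy (Yin--style tautological relations, the bound $45=2(b_{prim}+1)-1$ with $b_{prim}=\dim H^6_{prim}(X,\Q)=22$, and the identification of the last relation with Kimura finite-dimensionality) matches the paper, and your numerology is correct. However, the central reduction you propose --- transferring $R^\ast(X^m)$ to products of the surface $S$ via the split injection $h(X)\hookrightarrow h^2(S)(-2)\oplus\bigoplus\L(\ast)$ and then ``invoking Yin's results for a K3-type surface'' --- has a genuine gap, for two reasons. First, that splitting is only \emph{additive}: a split injection of motives does not carry the intersection product on $B^\ast(X^m)$ to the intersection product on $B^\ast(S^{\le m})$, and the whole content of the theorem is about the multiplicative structure (the relations $\tau_{i,j}\cdot c_i=0$, $\tau_{i,j}\cdot\tau_{i,j}=b_{prim}\,o_i\cdot o_j$, $\tau_{i,j}\cdot\tau_{i,k}=\tau_{j,k}\cdot o_i$). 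Second, $S$ is a codimension-$2$ linear section of the double EPW sextic, not a K3 surface, so Yin's theorem --- whose Chow-theoretic inputs are the Beauville--Voisin results and the MCK property of a K3 --- simply does not apply to it; no analogue of the Beauville--Voisin class or of the multiplicative relations is available for $S$.

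What the paper does instead is work directly on $X$: it first records the cohomological presentation of $\overline R^\ast(X^m)$ (the analogue of Yin's Lemma 2.3, with generators $o_i,h_i,c_i,\tau_{j,k}$), and then lifts each relation to $B^\ast$ using three specific inputs that your proposal never brings into play: the Franchetta property for $X$ (Proposition \ref{frank1}) for the relations in one variable; the ``Franchettina'' property for $X\times X$ modulo algebraic equivalence (Proposition \ref{Franchettina}) for the two-variable relations --- this is exactly where the delicate point $\Delta_X\cdot c_2\in\Im r_X\otimes\Im r_X$ modulo algebraic equivalence enters, and it is the reason the whole statement is only proved modulo algebraic equivalence; and the MCK decomposition modulo algebraic equivalence (Theorem \ref{thm:mck}) for the three-variable relation $\tau_{i,j}\cdot\tau_{i,k}=\tau_{j,k}\cdot o_i$. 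Your proposal treats these lifts as formal consequences of the surface model, but they are the actual mathematical content of the proof; without them the argument does not close. The ``moreover'' part of your proposal (the wedge-power relation in $X^{46}$ being equivalent to finite-dimensionality) is essentially the paper's argument and is fine.
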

 
 \begin{proof} First, let us introduce some notations. Let $h\in B^1(X)$ be the hyperplane section class, and let $c\in B^2(X)$ denote the class induced by $c_2(Q)\in B^2(\Gr(2,5))$ where $Q$ is the tautological bundle on the Grassmannian. Note that we have
   \[  B^1(X)=\CH^1(X)=\Q h\ ,\ \ B^2(X)=\CH^2(X)=\Q h^2 \oplus \Q c \]
   (cf. Corollary \ref{onlyone}).
  We define
   \[ o:=  {1\over 10} h^6\ \ \in\ B^6(X)\]
to be the degree 1 zero-cycle proportional to $h^{6}\in B_{0}(X)$. Note that $B_0(X)\cong\Q$, and so any point on $X$ represents the class $o$. 
 For ease of notation, let us write 
   \[  \begin{split}  o_i:= p_i^\ast o\ \ \in B^6(X^m)\ ,\\
                            h_i:= p_i^\ast h\ \ \in B^1(X^m)\ ,\\ 
                            c_i:= p_i^\ast c\ \ \in B^2(X^m)\ ,\\  
                            \end{split}\]                        
 where $p_i
	: X^m \to X$ is the projection on the $i$-th factor. Let us also write
	\[ \tau := \pi^6_{prim} \ \ \in\ 	B^6(X\times X)\]
	where $\pi^6_{prim}$ is the projector on the primitive cohomology introduced in the proof of Theorem \ref{thm:mck},
	and $\tau_{i,j} := p_{i,j}^*\tau$, where $p_{i,j} : X^m \to X\times X$ is the
	projection on the product of the $i$-th and $j$-th factors. 	

To prove Theorem~\ref{tautsubring}, we first determine (just as in 
	\cite[Lemma~2.3]{Yin}) the relations on the level of cohomology among the cycles involved:
		
	\begin{prop}\label{L:Yin'}  
		The $\Q$-subalgebra $\overline{{R}}\,^*(X^m)$ of the cohomology algebra ${{H}}^*(X^m,\Q)$ generated by $o_i, h_i, c_i, \tau_{j,k}$, $1\leq i \leq m$, $1\leq j < k \leq m$, is isomorphic to the free graded $\Q$-algebra generated by $o_i, h_i, c_i, \tau_{j,k}$, modulo the following relations:
			 	\begin{equation}\label{E:X'}
	      h_i \cdot o_i = c_i \cdot o_i=0, \quad c_i^4=0, \quad c_i^2=\lambda c_i\cdot h_i^2+\mu h_i^4,\quad 
			h_i^6 = \nu c_i\cdot h_i^4=10\,o_i\,;
			\end{equation}
			\begin{equation}\label{E:X2'}
			\tau_{i,j} \cdot o_i = 0 ,\quad \tau_{i,j} \cdot h_i = \tau_{i,j}\cdot c_i= 0, \quad \tau_{i,j} \cdot \tau_{i,j} = b_{{\mathrm{prim}}}\, o_i\cdot o_j
			\,;
			\end{equation}
			\begin{equation}\label{E:X3'}
			\tau_{i,j} \cdot \tau_{i,k} = \tau_{j,k} \cdot o_i\,;
			\end{equation}
			\begin{equation}\label{E:X4'}
			\sum_{\sigma \in \mathfrak{S}_{b_{\mathrm{prim}}+1}} \mathrm{sgn}(\sigma) \prod_{i=1}^{b_{\mathrm{prim}}+1} \tau_{i,b_{\mathrm{prim}}+1 +\sigma(i)} = 0\ . 
			\end{equation}
	Here $\lambda, \mu, \nu\in\Q$ are some constants, $b_{{\mathrm{prim}}}:=\dim H^6(X,\Q)_{prim}=22$ is the rank of the primitive part of $H^*(X, \Q)$, and $ \mathfrak{S}_{b}$ denotes the symmetric group on $b$ elements acting by permutations on the factors.
	\end{prop}
	
	\begin{proof}%\Robert{Work in progress. Je reviendrai pour faire un peu de nettoyage ici plus tard. Ceci dit, les idees essentielles sont en place; tu peux deja lire si tu veux...} 
	First, let us check that the above relations hold in $H^*(X^m,\Q)$.
		The relations \eqref{E:X'} take place in $X$ and are clear. The relations \eqref{E:X2'} take place in $X^2$\,: the
		relations $\tau_{i,j} \cdot o_i = 0$  and $ \tau_{i,j}
		\cdot h_i =  \tau_{i,j}\cdot c_i= 0$ follow directly from \eqref{E:X'}, 
		while the relation
		$\tau_{i,j} \cdot \tau_{i,j} = b_{{\mathrm{prim}}}\, o_i\cdot o_j $ follows directly
		from the general fact that $\deg(\Delta_X\cdot \Delta_X) = \chi (X)$, the topological Euler
		characteristic of $X$.
		The relation \eqref{E:X3'} takes place in $X^3$ and follows from the
		relations~\eqref{E:X'} and~\eqref{E:X2'} together with the fact that the cohomology algebra $H^*(X^m,\Q)$ is graded. Finally, the relation \eqref{E:X4'} takes place in $X^{2(b_{\mathrm{prim}}+1)}$ and expresses the fact that the $(b_{\mathrm{prim}}+1)$-th exterior power of $H^*_{\mathrm{prim}}(X)$ vanishes.
		
		Second, in order to check that these relations generate the relations among $o_i, h_i, \tau_{j,k}$, $1\leq i \leq m$, $1\leq j < k \leq m$, it is sufficient to check (just as in~\cite[\S 3]{Yin}) that the pairing $$\overline{{R}}\,^i(X^m) \times \overline{{R}}\,^{mn-i}(X^m) \to \overline{{R}}\,^{mn}(X^m) \simeq \Q\,o_1\cdot o_2 \cdots o_m$$ is non-degenerate. Now, the argument given in~\cite[\S 3]{Yin} adapts \emph{mutatis mutandis} to our setting (cf. \cite[Proof of Lemma 2.12]{FLV3} %\Michele{did you want to add another citation?} 
for details on the mutatis mutandis). This proves the proposition.
	\end{proof}
	
Now, let us end the proof of Theorem \ref{tautsubring}. In view of
	Lemma~\ref{L:Yin'}, it suffices to establish relations \eqref{E:X'}, \eqref{E:X2'}, \eqref{E:X3'} and \eqref{E:X4'} modulo algebraic equivalence.	
		The relations \eqref{E:X'} hold true  in ${R}^*(X)$ thanks to the Franchetta property for $X$ (Proposition \ref{frank1}): indeed, the relations \eqref{E:X'} hold in cohomology and involve only generically defined cycles, and so Proposition \ref{frank1} guarantees that these relations are true modulo rational equivalence.
	The  relations \eqref{E:X2'} hold true in ${R}^*(X^2)$ thanks to the Franchettina property for $X\times X$ (Proposition \ref{Franchettina}).
	(Note that the relations \eqref{E:X'}  in ${R}^*(X^2)$ follow also more simply from the fact that $B^{12}(X\times X) = \Q$.)
	The relation \eqref{E:X3'}  takes place in ${R}^*(X^3)$ and, given the
	relations~\eqref{E:X'} and~\eqref{E:X2'} in $R^\ast()$, it follows from the MCK decomposition modulo algebraic equivalence (Theorem \ref{thm:mck}). More precisely, Theorem \ref{thm:mck} guarantees that we have equality
   \[  \Delta_X^{\rm sm}\circ (\pi^6_{prim}\times\pi^6_{prim})=   \pi^{12}\circ \Delta_X^{\rm sm}\circ (\pi^6_{prim}\times\pi^6_{prim})  \ \ \ \hbox{in}\  B^{12}(X^3)\ ,\]
   which (using Lieberman's lemma) translates into
   \[ (\pi^6_{prim}\times \pi^6_{prim}\times\Delta_X)_\ast    \Delta_X^{\rm sm}  =   ( \pi^6_{prim}\times \pi^6_{prim}\times\pi^{12})_\ast \Delta_Y^{\rm sm}                            
                                \ \ \ \hbox{in}\ B^{12}(X^3)\ ,\]
   which implies that
   \[  \tau_{13}\cdot \tau_{23}= \tau_{12}\cdot o_3\ \ \ \hbox{in}\ B^{12}(X^3)\ .\]
	
		Finally, the relation~\eqref{E:X4'}, which takes place in ${R}^*(X^{2(b_{prim}+1)})$, holds if and only~if the motive of $X$ is Kimura finite-dimensional.
		(Indeed, let us assume the relation \eqref{E:X4'} holds true modulo algebraic equivalence. Then the Voevodsky--Voisin nilpotence theorem \cite[Theorem B-1.2]{MNP} implies that the same relation actually holds modulo rational equivalence. This means exactly that the Chow motive $\wedge^{b_{prim}+1} h_{prim}(X)$ is zero, i.e. $h_{prim}(X)$ is evenly finite-dimensional, in the language of \cite{Kim}. Since the difference between $h(X)$ and $h_{prim}(X)$ is just a sum of twisted Lefschetz motives, it follows that the Chow motive $h(X)$ is (evenly) finite-dimensional in the sense of \cite{Kim}.)
   \end{proof}

A Franchettina property holds also for $X^3$, and - as we will see in the next section - for related hyper-K\"ahler varieties. 

\begin{prop}\label{frankX3}
There is an injection

$$\GDB_\mathcal{B}^*(X^3)\hookrightarrow H^*(X^3,\Q).$$
\end{prop}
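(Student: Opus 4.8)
The plan is to reduce this to the $m=3$ case of Theorem \ref{tautsubring}: I would show that $\GDB^\ast_\sB(X^3)$ coincides with the tautological ring $R^\ast(X^3)$ of Definition \ref{tautring}, and then invoke Theorem \ref{tautsubring}.

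First I would run the stratified projective bundle argument of \cite[Proposition 2.6]{FLV3} for the relative triple product, exactly as in the proofs of Propositions \ref{frankholds} and \ref{Franchettina}. Form $\bar{\sX}\times_{\bar{\sB}}\bar{\sX}\times_{\bar{\sB}}\bar{\sX}$ with its natural morphism $\sigma$ to $\sC^3$. Since $\sO_\sC(2)$ is very ample, the conditions $(\ast_k)$ of \cite[Definition 2.5]{FLV3} hold for $k\le 3$ (as in the proof of Proposition \ref{frankholds}), so $\sigma$ exhibits $\bar{\sX}\times_{\bar{\sB}}\bar{\sX}\times_{\bar{\sB}}\bar{\sX}$ as a projective bundle of one rank over the open locus of $\sC^3$ where the three points are pairwise distinct, of larger rank over each of the three partial diagonals, and of the largest rank over the small diagonal. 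The stratified projective bundle argument then yields
$$\GDB^\ast_\sB(X^3)=\Bigl\langle \Im\bigl(B^\ast(\sC^3)\to B^\ast(X^3)\bigr),\ p_{ij}^\ast\Delta_X\ (1\le i<j\le 3),\ \Delta_X^{sm}\Bigr\rangle\ .$$
Now every smooth GM sixfold avoids the vertex of the cone, so $X^3\subset(\sC^0)^3$, and since $\Gr(2,5)$ has trivial Chow groups the image of $B^\ast(\sC^3)\to B^\ast(X^3)$ equals $\langle p_i^\ast\Im\bigl(B^\ast(\Gr(2,5))\to B^\ast(X)\bigr):i=1,2,3\rangle$, exactly as in the proof of Proposition \ref{frank1}. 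Moreover the small diagonal is redundant among the generators, since $\Delta_X^{sm}=p_{12}^\ast\Delta_X\cdot p_{23}^\ast\Delta_X$ (in $X^3$ the partial diagonals $\{x=y\}$ and $\{y=z\}$ meet transversally along $\Delta_X^{sm}$). Comparing with Definition \ref{tautring}, this gives $\GDB^\ast_\sB(X^3)=R^\ast(X^3)$, whence Theorem \ref{tautsubring} (with $m=3\le 45$) produces the injection $\GDB^\ast_\sB(X^3)\hookrightarrow H^\ast(X^3,\Q)$. Should one want this injection to be split, as in Proposition \ref{Franchettina}, the splitting is produced from the Chow--K\"unneth projectors of Theorem \ref{thm:mck} exactly as there.

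The only step requiring care is the combinatorial bookkeeping in the stratified projective bundle argument for a triple product: there are four strata on $\sC^3$ rather than the two on $\sC\times\sC$ treated in Proposition \ref{frankholds}, but the argument is otherwise formally identical and involves no new geometric input. All of the real content---that the tautological relations on $X^3$ lift from cohomology to algebraic equivalence---is already packaged into Theorem \ref{tautsubring}, and hence ultimately into the Franchettina property for $X\times X$ (Proposition \ref{Franchettina}) and the MCK decomposition modulo algebraic equivalence (Theorem \ref{thm:mck}).
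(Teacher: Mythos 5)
Your proposal is correct and follows essentially the same route as the paper: the paper likewise verifies condition $(\ast_3)$ of \cite[Definition 2.5]{FLV3}, applies the stratified projective bundle argument to $\sX\times_{\bar{\sB}}\sX\times_{\bar{\sB}}\sX\to\sC\times\sC\times\sC$ to obtain $\GDB^\ast_\sB(X^3)=\langle c_1,c_2,\Delta_X\rangle=R^\ast(X^3)$, and then concludes by Theorem \ref{tautsubring}. Your extra remarks (the redundancy of $\Delta_X^{sm}=p_{12}^\ast\Delta_X\cdot p_{23}^\ast\Delta_X$ among the generators, and the reduction from $\sC^3$ to $\Gr(2,5)^3$) are correct details that the paper leaves implicit.
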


\begin{proof}%\Robert{As before, need to work with $\bar{B}$ here in order to get a stratified proj. bundle.}
Once again we use a ``stratified projective bundle'' argument. Let $\bar{\sX}\to \bar{\mathcal{B}}$ denote the universal family of all quadric sections of the cone $\mathcal{C}$ (see Proposition \ref{frank1}). The argument is similar to that of Proposition \ref{frankholds}. Condition $(\ast_3)$ from \cite[Definition 2.5]{FLV3} holds for $H^0(\sC,\mathcal{O}_\sC(2))$, and we have a natural diagram

\[
   \xymatrix{ \mathcal{X}\times_{\bar{\mathcal{B}}}\mathcal{X}\times_{\bar{\mathcal{B}}} \mathcal{X} \ar[r] \ar[d] & \mathcal{C}\times \mathcal{C}\times \mathcal{C}\\  
\bar{\mathcal{B}}} \] 

Let $\Delta^{sm}_\mathcal{C}$ denote the small diagonal in $\mathcal{C}\times \mathcal{C}\times \mathcal{C}$.
The horizontal map displays $\mathcal{X}\times_{\bar{\sB}}\mathcal{X}\times_{\bar{\sB}} \mathcal{X}$ as a stratified projective bundle over $\mathcal{C}\times \mathcal{C}\times \mathcal{C}$, where the strata are given by the various partial diagonals. 
%projective space bundle over $\mathcal{C}\times \mathcal{C}\times \mathcal{C}/\Delta^{sm}_\mathcal{C}$, and a projective space bundle of different rank on $\Delta^{sm}_\mathcal{C}$.

Since $(\ast_3)$ holds true, we can apply \cite[Proposition 2.6]{FLV3}, and obtain the following equality %\Robert{Need to cite the precise result here. In particular, we should mention that condition $(\ast_3)$ of \cite{FLV3} is satisfied [because $X$ is cut out
% by a degree 2 equation on $\sC$], and then some result in \cite{FLV3} gives the claimed equality.} 

$$\GDB_\mathcal{B}^*(X^3) =\langle c_1, c_2, \Delta^{}_X\rangle,$$
Here as usual we denote by $c_1,\ c_2$ the two Chern classes of the tautological bundle of the Grassmannian, and $c_i, \Delta_X$ are considered as elements of $B^i(X^3)$ resp. of $B^6(X^3)$ via all possible pullbacks under the projections $X^3\to X$ resp. $X^3\to X^2$. Then, in order to conclude it is enough to invoke Theorem \ref{tautsubring}.
\end{proof}

\section{Families of hyper-K\"ahler varieties}

We recall the following conjecture, known as ``generalized Franchetta conjecture'':

\begin{conj} Let $\mathcal{M}$ be the moduli stack of a locally complete family of polarized hyper-K\"ahler varieties, and let $\sX\to\mathcal{M}$ be the universal family (considered as a stack). Then $\sX\to\mathcal{M}$ has the Franchetta property, i.e. there are injections
   \[ \GDCH^\ast_{\mathcal{M}}(X)\ \hookrightarrow\ H^\ast(X,\Q) \]
   for any fiber $X$. 
\end{conj}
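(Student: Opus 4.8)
The statement is the \emph{generalized Franchetta conjecture}, which is open in full generality; no uniform argument across all deformation types of hyperk\"ahler varieties is available, and it is known only for a few locally complete families (cf. \cite{FLV}, \cite{FLV3}). The realistic plan is therefore to proceed family by family. For a fixed polarized family $\sX\to\mathcal{M}$, the first step is to obtain an explicit description of the generically defined cycles $\GDCH^\ast_{\mathcal{M}}(X)$. Whenever the universal family (or an auxiliary family dominating it) can be realized as a \emph{stratified projective bundle} over a product of varieties with trivial Chow groups, the stratified projective bundle argument of \cite[Proposition 2.6]{FLV3} — exactly as used here in Propositions \ref{frankholds}, \ref{Franchettina} and \ref{frankX3} — identifies $\GDCH^\ast_{\mathcal{M}}(X^m)$ with an explicit tautological subalgebra, generated by the polarization, the Chern classes of the tangent bundle, and, on self-products, the diagonal together with the Beauville--Bogomolov class.

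The second step is to show that this tautological subalgebra injects into cohomology. Here one first records the cohomological relations among the generators (a Yin-type presentation as in Proposition \ref{L:Yin'} and \cite[Lemma 2.3]{Yin}), checks via a non-degeneracy-of-pairing argument that they are generated by the evident ones, and then lifts each relation from cohomology to rational equivalence using the Franchetta property in low codimension together with an MCK decomposition. For those hyperk\"ahler varieties that arise from a Fano of K3 type — for instance the double EPW sextics and cubes attached to GM sixfolds — one can transport the entire structure across the Abel--Jacobi correspondence of Proposition \ref{dk1}, which matches $H^6(X,\Q)_0$ with $H^2(\tilde{Y}_A,\Q)_0$ compatibly with the intersection form; this is precisely the mechanism yielding Corollary \ref{franKahler}.

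The main obstacle is the passage from algebraic to rational equivalence. Lifting the tautological relations to $\CH^\ast$ — the prototype being the relation $\Delta_X\cdot c_2\in \Im r_X\otimes \Im r_X$ discussed in Remark \ref{problem} — requires controlling generically defined, homologically trivial cycles that the present methods only kill modulo algebraic equivalence; this is exactly the gap that forces the weakened conclusion here. Beyond a single family, the conjecture as stated demands a \emph{uniform} MCK decomposition and Kimura finite-dimensionality for every deformation type, both of which are themselves open: without them one cannot exclude generically defined cycles in the higher self-products $X^m$ that fail to be tautological, and the injectivity into $H^\ast(X^m,\Q)$ cannot be certified (compare the conditional clause in Theorem \ref{tautsubring}). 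Consequently a complete proof of the conjecture seems to require substantial new input, and with current techniques one can realistically establish it only for individual locally complete families — and, as in this paper, frequently only modulo algebraic equivalence.
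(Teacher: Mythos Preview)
Your assessment is correct: the statement is a \emph{conjecture}, and the paper does not prove it. Immediately after stating it, the authors simply remark that it is studied in \cite{PSY}, \cite{BL}, \cite{FLV}, \cite{FLV3}, and that in the present paper they ``obtain results for new families by restricting to algebraic equivalence''. There is thus no proof in the paper to compare your proposal against; your recognition that the statement is open in general, and that only family-by-family results are available, matches the paper's own stance.

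A few calibrations are worth noting. First, the conjecture as stated concerns a single fiber $X$, not self-products $X^m$; your digression into Yin-type presentations and Kimura finite-dimensionality for $X^m$ goes beyond what the conjecture asks, though it is true that such tools feed into the partial results. Second, the paper's actual contribution towards this conjecture is Corollary \ref{franKahler}, which you correctly cite: it establishes the injection $\GDB^\ast \hookrightarrow H^\ast$ (i.e.\ modulo \emph{algebraic} rather than rational equivalence) for very general double EPW sextics and cubes. The mechanism there is not quite the Abel--Jacobi correspondence of Proposition \ref{dk1} that you emphasize, but rather the modular interpretation of these hyperk\"ahler varieties as moduli of stable objects in the Kuznetsov component (via \cite{PPZ} and \cite{KKM}), which yields a generically defined split injection of motives $h(Z)\hookrightarrow \bigoplus h(X^3)(\ast)$ and thereby reduces the question to the Franchettina property for $X^2$ or $X^3$ (Propositions \ref{Franchettina} and \ref{frankX3}). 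Your identification of the rational-versus-algebraic obstacle (Remark \ref{problem}) is apt, but that obstacle lives on the GM sixfold side rather than directly in the hyperk\"ahler conjecture.
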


This conjecture is studied for K3 surfaces in \cite{PSY}, and for higher-dimensional hyper-K\"ahler varieties in \cite{BL}, \cite{FLV}, \cite{FLV3}.
Here, we obtain results for new families by restricting to {\em algebraic equivalence}.

Let us recall that, associated to a given Lagrangian datum, there exists another family of hyper-K\"ahler varieties, this time of dimension 6. These are dubbed \it double EPW cubes\rm; for a complete definition of the complicated geometric construction of such objects see \cite{IKKR}. %In \cite{PPZ} the authors construct as moduli spaces of stable objects a new class of HyperKähler sixfolds. We call these varieties PPZ sixfolds. PPZ sixfolds have the same numerical invariants as EPW double cubes, and they are likely to be the same class of varieties - though this has not been proven yet.

\smallskip

%The following corollary comes as an almost direct consequence of Prop. \ref{frankX3}.

\begin{cor}\label{franKahler}%\Robert{I'm afraid we need to change the phrasing of part (2) of this corollary. Indeed, in \cite{PPZ} it is only {\em conjectured\/} that the 6-dim. HK varieties they construct as moduli spaces of stable objects coincide with double EPW cubes (they have the same numerical invariants). So we should say ``let $Z$ be a 6-dim. HK as constructed in \cite{PPZ}''
%(or we should contact them to know whether they have any progress on this conjecture...) For (1) it's OK: the very general double EPW sextic arises as a moduli space of stable objects in \cite{PPZ}.}\Michele{OK I have changed this, please check whether is maybe a bit to synthetic....}

\begin{enumerate}
\item Let $Y$ be a very general double EPW sextic, and $\mathcal{M}_{EPW6}$ the moduli space of such varieties. Then we have

$$\GDB^\ast_{\mathcal{M}_{EPW6}}(Y)\hookrightarrow H^*(Y,\Q);$$

\item let $Z$ be a very general double EPW cube, and $\mathcal{M}_{EPW3}$ the moduli space of such varieties. Then we have

$$\GDB^\ast_{\mathcal{M}_{EPW3}}(Z)\hookrightarrow H^*(Z,\Q).$$

\end{enumerate}
\end{cor}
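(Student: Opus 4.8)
The plan is to deduce both statements from the Franchetta-type results already established for $X$, $X\times X$ and $X^3$, by transporting generically defined cycles along the correspondences relating a GM sixfold to its associated double EPW sextic and double EPW cube. The essential input is the Abel--Jacobi isomorphism of Proposition \ref{dk1}, which provides a correspondence $\Gamma_0\in\CH^4(X\times\tilde Y_A)$ inducing an isomorphism on primitive cohomology, compatible with the Beauville--Bogomolov form; crucially, this correspondence is defined relatively over the relevant parameter space, so it carries generically defined cycles to generically defined cycles.

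First I would set up the relative picture: the double EPW sextics form a locally complete $20$-dimensional family $\mathcal{M}_{EPW6}$, and the construction of Subsection \ref{ss:aj} (the $\P^1$-bundle $\tilde\sigma\colon F\to\tilde Y_{A,V_5}$, the divisor $\iota\colon\tilde Y_{A,V_5}\hookrightarrow\tilde Y_A$, and the incidence correspondence $\LL_2^\sigma(X)$) works in families over a suitable cover of the moduli space of GM sixfolds. Hence the correspondence $\Gamma_0$, together with its transpose and the idempotents cutting out $h^6_{prim}(X)$ and $h^2_{prim}(\tilde Y_A)$, is generically defined. Concretely, I would argue that $\GDB^\ast_{\mathcal{M}_{EPW6}}(Y)$ for $Y=\tilde Y_A$ is generated (as a $\Q$-algebra) by the image of $B^\ast$ of the ambient $\P(V_6)$ (i.e. powers of the polarization $h_Y$), together with the image of $\GDB^\ast_\sB(X\times\cdots\times X)$ under push-forward along (self-products of) $\Gamma_0$; this is the analogue in the EPW setting of the ``stratified projective bundle'' reductions used in Propositions \ref{frankholds} and \ref{frankX3}. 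One then needs to see that on powers of $Y$ every generically defined cycle lies in the subring generated by $h_Y$, the primitive classes $\tau^Y_{i,j}$ transported from $\tau_{i,j}\in R^\ast(X^m)$ via $\Gamma_0$, and the diagonal --- and this follows because $H^\ast(Y,\Q)$ in low weight is spanned by $H^2$, whose transcendental part is exactly the image of $H^6_{prim}(X)$. Injectivity into cohomology is then reduced, by the same bookkeeping as in Proposition \ref{L:Yin'}, to the already proven injectivity of $R^\ast(X^m)\hookrightarrow H^\ast(X^m,\Q)$ for $m\le 45$ (Theorem \ref{tautsubring}); since a double EPW sextic is a fourfold, only small self-products and low cohomological degrees intervene, so the bound $m\le 45$ is comfortably sufficient.

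For the double EPW cubes $Z$ (statement (2)) I would run the same strategy, using that a double EPW cube is again built from the same Lagrangian datum $A$, hence sits in a locally complete family $\mathcal{M}_{EPW3}$ over the same base, and that there is (by O'Grady, Iliev--Kapustka--Kapustka--Ranestad \cite{IKKR}, and the Abel--Jacobi-type comparisons in \cite{DK1}) a correspondence relating the primitive degree-$2$ cohomology of $Z$ to the primitive $H^6$ of the associated GM sixfold, again generically defined and compatible with the Beauville--Bogomolov forms. The same reduction then expresses $\GDB^\ast_{\mathcal{M}_{EPW3}}(Z)$ in terms of the polarization, transported primitive classes, and diagonals on self-products of the GM sixfold, and injectivity into cohomology follows from Theorem \ref{tautsubring}. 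The point that makes everything work modulo algebraic equivalence (rather than rational equivalence) is that on $X$ we only control $B^\ast$, not $\CH^\ast$ --- e.g. the relation $\Delta_X\cdot c_2\in\Im r_X\otimes\Im r_X$ was only proven modulo algebraic equivalence (Remark \ref{problem}) --- so the conclusion is correspondingly stated for $\GDB^\ast$, and the whole transport argument is carried out in the category of motives modulo algebraic equivalence.

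The main obstacle I expect is precisely the relative/families aspect: one must check that the geometric constructions of Subsection \ref{ss:aj} and of \cite{IKKR} genuinely globalize over (an étale cover of) the moduli space, so that $\Gamma_0$ and the relevant projectors are in the image of $B^\ast$ of the total space of the universal family, and that the ``stratified projective bundle'' type reduction identifying $\GDB^\ast$ of the EPW varieties with an explicit subring really holds. A secondary technical point is handling the algebraic (non-transcendental) part of $H^\ast(Z,\Q)$ for the EPW cube, where the full cohomology ring structure is less elementary than for a double EPW sextic; here one invokes that the Néron--Severi and higher algebraic pieces are generated by generically defined classes (polarization and the transported classes), so that the Franchettina inputs for $X^2$ and $X^3$ (Propositions \ref{Franchettina} and \ref{frankX3}) suffice to pin them down.
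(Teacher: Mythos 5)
Your proposal diverges from the paper's argument in an essential way, and the divergence is where the gap lies. You want to generate $\GDB^\ast_{\mathcal{M}_{EPW6}}(Y)$ (resp.\ $\GDB^\ast_{\mathcal{M}_{EPW3}}(Z)$) by the polarization together with classes transported from powers of $X$ via the Abel--Jacobi correspondence $\Gamma_0$ of Proposition \ref{dk1}, invoking ``the analogue in the EPW setting of the stratified projective bundle reductions''. But no such reduction is available: the universal family of double EPW sextics over $\mathcal{M}_{EPW6}$ is not a (stratified) projective bundle over anything with trivial Chow groups --- the fibers are double covers of sextic hypersurfaces in $\P(V_6)$, and there is no direct handle on $B^\ast$ of the total space. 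Moreover $\Gamma_0$ only realizes an isomorphism on \emph{degree-$2$ primitive cohomology}; it gives you no control over generically defined cycles of higher codimension on $Y$ or $Z$, and for the sixfold $Z$ (of K3$^{[3]}$ type) the cohomology ring is not even generated by $H^2$, so the claim that every generically defined class lies in the subring generated by $h_Y$, the transported $\tau$'s and diagonals is precisely the unproven heart of the matter, not a bookkeeping step. The existence of a generically defined correspondence identifying $H^2_{prim}(Z)$ with $H^6_{prim}(X)$ for the EPW \emph{cube} is also not something you can simply quote from \cite{IKKR} or \cite{DK1}.

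The paper closes exactly this gap by a different mechanism: it uses \cite{PPZ} and \cite{KKM} to realize the very general double EPW sextic and cube as moduli spaces of stable objects in the Kuznetsov component $\mathcal{A}_X$ of a GM sixfold, and then invokes \cite[Theorem 1.1]{FLV3} to obtain a \emph{generically defined split injection of motives}
\[ h(Z)\ \hookrightarrow\ \bigoplus_i h(X^3)(\ast)\ \ \hbox{in}\ \sM_{\rm rat} \]
(and $h(Y)\hookrightarrow\bigoplus_i h(X^2)(\ast)$ for the sextic). This embeds the \emph{entire} motive, in all degrees, into powers of $X$, so that $\GDB^\ast(Z)\hookrightarrow\bigoplus\GDB^\ast_\sB(X^3)$ and the conclusion follows at once from Propositions \ref{Franchettina} and \ref{frankX3}. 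If you want to salvage your approach, you would need to supply an argument that the full motive (not just $h^2_{prim}$) of the EPW variety is generically split off from powers of $X$; absent the moduli-of-objects input, your transport via $\Gamma_0$ does not achieve this.
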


\begin{proof}
The argument is basically the same for EPW double sextics and EPW double cubes. The only difference consists in taking $X^2$ for sextics and $X^3$ for cubes. We will then develop just the second case in detail.

\smallskip

In \cite[5.4.2]{PPZ}, the authors construct  certain hyper-Kähler sixfolds (that have the same numerical invariants as double EPW cubes) as moduli spaces $\mathcal{M}_{\sigma,\nu}(\mathcal{A}_X)$ of stable objects inside the Kuznetsov component of GM sixfolds. They also observe that a very general double EPW cube would coincide with one of these sixfolds (hence endowing the very general cube with a moduli space structure), once it is proven that $X$ and the associated double EPW cube have the same period point. In the preprint \cite[Section 5 and Remark 5.7]{KKM}, the authors prove that this is indeed the case. Hence we may assume that the very general double EPW cube is a moduli space of stable objects. This means that, if $\mathcal{B}$ denotes the parameter space of GM sixfolds and $\mathcal{M}_{EPW3}$ the moduli space of double EPW cubes, then we have a commutative diagram

\[
   \xymatrix{ \mathcal{Z}_\mathcal{B}\ar[d] \ar[r] & \mathcal{Z} \ar[d] \\
 \mathcal{B} \ar[r] & \mathcal{M}_{EPW3}. }  
 \]

Here $\mathcal{Z}$ stands for the universal family of double EPW cubes, and $\mathcal{Z}_\mathcal{B}$ denotes the base change. The map on the lower level of the diagram is given by taking the moduli space of stable objects with a given Mukai vector.

\smallskip

%Let $Z$ be a EPW double cube.  %This allows us to apply a Grothendieck-Riemann-Roch argument, equal to the one of B\"{u}lles \cite[Sect. 2]{Bu}.\Robert{Need to cite the precise result from \cite{FLV3} here (which is just B\"ulles with a better bound on the exponent).} We refrain to give the details, since it is enough to follow \cite{Bu} step by step. The upshot is that we can construct an injection
A second important consequence of the modular nature of double EPW cubes is that, by \cite[Theorem 1.1]{FLV3}, we have a split injection

\begin{equation}
h(Z)\hookrightarrow \bigoplus_i h(X^3)(\ast)\ \ \ \hbox{in}\ \sM_{rat}\ ,
\end{equation}
that embeds the Chow motive of $Z$ inside some copies of the motive of the triple self-product of the GM sixfold (up to some twists). Moreover, this split injection is generically defined (with respect to the parameter space $\sB$ as above). It follows that there is a commutative diagram 

\[
   \xymatrix{ \GDB_\sB^*(Z) \ar@{^{(}->}[r] \ar[d] & \bigoplus_i \GDB_\sB^*(X^3) \ar[d] \\
H^*(Z,\Q)\ar@{^{(}->}[r] & \bigoplus_i H^*(X^3,\Q)\ .}  
 \]

Since the right-hand side vertical map is injective by Proposition \ref{frankX3}, this proves (2).

To prove (1), we use that the very general double EPW sextic can be described as a moduli space of stable objects in the Kuznetsov component of a GM sixfold
\cite[Proposition 5.17]{PPZ}. Then, after replacing $X^3$ by $X^2$ the same argument applies.
\end{proof}

%\bibliography{bibGM}
%\bibliographystyle{abbrv}

\vskip0.5cm

\end{document}